\pdfoutput=1
\documentclass[12pt]{article}

\usepackage{graphicx} 
\usepackage{pgfplots}
\usepackage[cmex10]{amsmath}
\usepackage{amsfonts}
\usepackage{mathrsfs}
\usepackage{amsthm}        
\usetikzlibrary{arrows}
\usepackage{tikz-cd}
\usepackage{quiver}
\usepackage[a4paper,left=30mm,right=30mm,top=25mm,bottom=25mm]{geometry}

\theoremstyle{plain} 
\newtheorem{theorem}{Theorem}[section]

\newtheorem{lemma}[theorem]{Lemma}

\newtheorem{problem}[theorem]{Open problem}

\theoremstyle{definition} 
\newtheorem{remark}[theorem]{Remark}
\newtheorem{definition}[theorem]{Definition}
\newtheorem{claim}{Claim}[theorem]

\newtheorem{example}[theorem]{Example}

\newcommand{\comment}[1]{\typeout{swallowing comment}}

\title{Algebraic characterisation of pseudo-elementary and second-order classes}
\author{János Balázs Ivanyos}

\begin{document}

\begin{center}

{\LARGE {Algebraic characterisation of pseudo-elementary and second-order classes}} \\[0.7cm]
{\large János Balázs Ivanyos}\\[0.2 cm]

{ 2026 May}\\[1 cm]

\end{center}
\begin{abstract}
In this paper, we provide  purely model-theoretic (algebraic) characterisations for classes definable in second-order logic and for pseudo-elementary classes 
(including PC and PC$_{\Delta}$ classes). Classical results of this flavor include Keisler–Shelah type theorems (characterising first-order definability by closure under ultraproducts and ultraroots) and Birkhoff's \textbf{HSP} theorem; a key starting point for this paper is Sági's work \cite{cl1}, which provides an algebraic description of classes definable by existential second-order sentences. Here we resolve several open problems from \cite{limit prod}
and \cite{cl1}. \\
\indent Our main results are the following.
\begin{itemize}
   \item We solve the long-standing problem of giving a purely algebraic characterisation of pseudo-elementary classes: we
characterise PC$_{\Delta}$ classes by intrinsic closure properties.  We also give a  characterisation for the basic pseudo-elementary classes
(PC). 
   \item We provide a structural classification of second-order equivalent structures, and we obtain purely algebraic characterisations
of the classes definable by second-order formulas as well as those definable by finitely many second-order sentences.
\end{itemize}

\end{abstract}
\newpage

\tableofcontents
\newpage

\section{Introduction}

One of the fundamental tools of classical (first-order) model theory is the ultraproduct construction. As it is well known, with the aid of this construction, one can characterize (first-order) axiomatizable classes of structures. Further, a key result that reveals the strength and elegance of the ultraproduct construction is the celebrated ``Isomorphic Ultrapowers Theorem" of Keisler and Shelah. It states that two structures ${\cal A}$ and ${\cal B}$ are elementarily equivalent (that is, ${\cal A}$ and ${\cal B}$ satisfy the same first-order formulas) if and only if ${\cal A}$ and ${\cal B}$ have isomorphic ultrapowers. Assuming the generalized continuum hypothesis, Keisler proved this theorem in 1968; later, in \cite{Shelah} Shelah eliminated the generalized continuum hypothesis from the proof. \\
\\
\indent 
According to the above theorems, the interconnections between first-order formulas and ultraproducts are well understood. However. The situation becomes substantially more complicated when one passes to higher-order logics. As {\bf a motivating example} we note that ultraproducts, in general, do not preserve the validity of second-order formulas. In fact, (in a fixed signature) one can always find finite structures with an infinite ultraproduct, therefore if a first-order formula is true in all finite structures, then it should also be true in some infinite structure, as well. Thus, in another words ``being finite'' and ``being infinite" are not first-order expressible. However, as it is also well known, these properties (finiteness and infinity) can be expressed by second-order formulas (see e.g. the proof of Theorem \ref{thm: NP} below). \\
\indent In view of the previous paragraph, it is natural to ask what can be said about higher order formulas and ultraproducts of models of them. This question was the starting point of 
S\'agi's paper \cite{cl1} and here we continue related investigations.

A little bit more precisely, here we continue investigations initiated
in \cite{cl1} and \cite{ultratop} and by developing
new technical tools, we will study connections between ultraproducts,
pseudo elementary classes and second-order formulas. In more detail, our
main
results are as follows.

\begin{itemize}

   \item We solve the long-standing  problem of giving a
purely algebraic characterisation of pseudo-elementary classes. In more detail, in Theorem \ref{thm: PC_DELTA} we
characterise PC$_{\Delta}$ classes by intrinsic closure properties.  We
also give a  characterisation for the basic pseudo-elementary classes
(PC) in Lemma \ref{lem: Pc-zárt}, and, using this description, in Theorem \ref{thm: NP} we obtain an internal (closure-based)
criterion for when a class of finite structures lies in \textsf{NP}.
   \item In Theorem \ref{thm: eqv} We provide a structural classification of second-order
equivalent structures, and in Theorem \ref{thm: axi so} we obtain purely algebraic characterisations
of the classes definable (axiomatizable) by second-order formulas. Further, in Theorem \ref{thm: axi finit so} we characterize classes
definable (axiomatizable) by finitely many second-order sentences.
\end{itemize}

From a technical perspective, our approach combines classical ultraproduct methods with tools from descriptive set theory. We will establish interconnections between axiomatizability and closed subspaces of certain spaces of formulas. This approach allows us to formulate preservation conditions in a topologically robust manner. More concretely, we will study certain subspaces of Cantor spaces associated to classes of structures with different axiomatizability properties, for the details we refer to Subsection \ref{spf}. We call these spaces ``spaces of formulas".

The structure of the rest of this work is as follows. In Section \ref{sec: prelim} we recall syntactical and semantical aspects of second-order logic, we provide a special Henkin-type sematics for second-order logic in which the second-order variables range over decomposable relations and functions, and finally we introduce the spaces of formulas we will deal with in later sections. In Section \ref{sec: pseudo} we provide a purely algebraic characterization of pseudo-elementary classes and in
\ref{sec: second-order} we characterize second-order elementary equivalence and second-order axiomatizable classes.
\section{Preliminaries}\label{sec: prelim}
In this section, we sum up some already known facts that we need later in this
work.

\subsection{Syntactical preparations}

We work with second-order logic over a given signature, allowing relation variables only. This entails no loss of generality: function variables of arity at most $n$ may be replaced by relation variables of arity at most $n+1$.

\begin{definition}
Fix a signature $\tau$. A second-order formula is in \emph{prenex second-order form} if it is of the
form
\[
Q_1\bar R_1\,Q_2\bar R_2\cdots Q_n\bar R_n\,\varphi,
\]
where each $Q_i\in\{\exists,\forall\}$ binds a finite tuple of relation
variables and $\varphi$ is first-order in the expanded signature. A block is a
maximal consecutive string of quantifiers of the same kind.

Let $\Sigma^1_0=\Pi^1_0$ be the class of first-order formulas, possibly with
free relation variables. For $n\geq1$, $\Sigma^1_n$ consists of formulas
equivalent to prenex formulas with $n$ alternating second-order blocks beginning
with an existential block; $\Pi^1_n$ is defined dually. Let $\Delta_n$ denote
the Boolean closure of $\Sigma^1_n\cup\Pi^1_n$. Thus
\[
SO=\bigcup_{n<\omega}\Sigma^1_n
 =\bigcup_{n<\omega}\Pi^1_n
 =\bigcup_{n<\omega}\Delta_n .
\]
We write $SO(\tau),\Sigma^1_n(\tau),\Pi^1_n(\tau),\Delta_n(\tau)$ and
$FO(\tau)$ when the signature may not be clear from the context.
\end{definition}

As it is well known, the second-order hierarchy and computational complexity theory are strongly connected.
In his 1974 paper \cite{Fagin},  Fagin proved that the complexity class \textsf{NP} (the class of decision problems decidable in nondeterministic polynomial time) is exactly the set of problems definable in existential second-order logic.
More precisely, a class $K$ of \emph{finite} $\tau$-structures belongs to \textsf{NP} if and only if there exists an existential second-order formula $\phi$ such that $$K=\mathrm{Mod}_{<\omega}(\phi).$$

In Section~\ref{sec: pseudo} we will need the notion of second-order types.
We shall also use the following notion of second-order type.

\begin{definition}\label{def: type}
Let $\kappa$ be uncountable, and let
$s=(s_i)_{i<\kappa}$ be a sequence of natural numbers in which each value
occurs $\kappa$ times. Let
$$\overrightarrow{X_{\kappa}}=\{X_i : i\in\kappa\}$$
be relation variables with $\mathrm{arity}(X_i)=s_i$. Let $F\subseteq\Delta_0$
be the set of closed formulas using only variables from $\bar X_\kappa$.
A set $p\subseteq F$ is a \emph{$\kappa$-2-type} if
\begin{itemize}
\item[(i)] every finite $q\subseteq p$ is satisfiable, i.e.
      $\exists\bar X_\kappa\,\bigwedge q$ has a model;
\item[(ii)] for every $\phi\in F$, either $\phi\in p$ or $\neg\phi\in p$.
\end{itemize}
\end{definition}

Finally, for regular $\kappa$ and cardinals $\lambda,\mu\leq\kappa$, the
language $L^2_{\kappa,\lambda,\mu}$ is obtained from atomic formulas by
allowing conjunctions and disjunctions of length $<\kappa$, first-order
quantification over tuples of length $<\lambda$, and second-order quantification
over tuples of relation variables of length $<\mu$. We shall mainly use
fragments of $L^2_{\kappa,\omega,\mu}$ with homogeneous second-order blocks.

\subsection{Henkin-type models}

For a structure $\mathcal A$ and a relation $R\subseteq A^k$, write
$\langle{\cal A},R\rangle$ for the expansion of {\cal A} obtained by adjoining $R$ to ${\cal A}$.

In this subsection we shall consider certain special models. These will be constructed using ultraproducts. A central tool for us throughout the paper will be the construction of the product of ultrafilters, we briefly recall what this construction is.

\begin{definition}
    Let \(\mathcal{F}\) be an ultrafilter on \(I\) and \(\mathcal{G}\) an ultrafilter on \(J\). The product ultrafilter \(\mathcal{F}\times\mathcal{G}\) on \(I\times J\) is defined by
\[
X\in\mathcal{F}\times\mathcal{G}\iff\{\,j\in J:\{\,i\in I:(i,j)\in X\,\}\in\mathcal{F}\,\}\in\mathcal{G}.
\]
\end{definition}

The following Fubini-type theorem from  Proposition 6.2.1 in \cite{chk}.

\begin{theorem}\label{thm: fubini}\footnote{This statement is formulated more generally for filters, and an even more general treatment can be found in \cite[Section 6.5]{chk}.
}
    Let us \(\mathcal{F}\) an ultrafilter on \(I\), \(\mathcal{G}\) another ultrafilter on \(J\) and a set of first-order structures $\{{\cal A}_{i,j}\text{ : }i\in I,\text{ }j\in J \}.$ Then $$\prod_{ I\times J}{\cal A}_{i,j}/{\cal F}\times{\cal G}\cong\prod_{J}\big(\prod_{I}{\cal A}_{i,j}/{\cal F}\big)/\cal G.$$
\end{theorem}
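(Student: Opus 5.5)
We construct the isomorphism explicitly as the natural "currying" map and verify that it is well defined, bijective, and preserves all relations and functions; no model-theoretic input is needed beyond the definitions of the product ultrafilter and the ultraproduct. Write $\mathcal{B}_j=\prod_I\mathcal{A}_{i,j}/\mathcal{F}$ and $\mathcal{B}=\prod_J\mathcal{B}_j/\mathcal{G}$. An element of $\prod_{I\times J}\mathcal{A}_{i,j}$ is a function $f$ with $f(i,j)\in A_{i,j}$. For each $j$ the section $f_j=f(\cdot,j)$ lies in $\prod_I A_{i,j}$. Define
\[
\Phi\bigl(f/\mathcal{F}\times\mathcal{G}\bigr)=\bigl(j\mapsto f_j/\mathcal{F}\bigr)/\mathcal{G}.
\]

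\textbf{Key computation.} For any $X\subseteq I\times J$ built from a condition on the values of finitely many $f,g,\dots$, the definition of $\mathcal{F}\times\mathcal{G}$ says
\[
X\in\mathcal{F}\times\mathcal{G}\iff\{j:\{i:(i,j)\in X\}\in\mathcal{F}\}\in\mathcal{G}.
\]
Apply this to $X=\{(i,j):f(i,j)=g(i,j)\}$. Its $j$-section is in $\mathcal{F}$ exactly when $f_j/\mathcal{F}=g_j/\mathcal{F}$ in $\mathcal{B}_j$. Hence $X\in\mathcal{F}\times\mathcal{G}$ iff $\{j:f_j/\mathcal{F}=g_j/\mathcal{F}\}\in\mathcal{G}$, i.e.\ iff $\Phi(f)=\Phi(g)$. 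This gives both well-definedness and injectivity.

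\textbf{Relations and functions.} The same computation works for a relation symbol $R$, using $X=\{(i,j):(f^1(i,j),\dots,f^n(i,j))\in R^{\mathcal{A}_{i,j}}\}$. Its $j$-section is in $\mathcal{F}$ iff $R^{\mathcal{B}_j}(f^1_j/\mathcal{F},\dots)$ holds, by the definition of the ultraproduct. For function symbols and constants, $\Phi$ commutes with the coordinatewise operations: both sides are computed pointwise, and sections of a pointwise operation are the pointwise operation on sections.

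\textbf{Surjectivity.} This is the only step that uses the axiom of choice, and it is the only mild subtlety. Take an element of $\mathcal{B}$, represented by $(b_j)_{j\in J}$ with $b_j\in B_j$. For each $j$, choose a representative $h_j\in\prod_I A_{i,j}$ with $h_j/\mathcal{F}=b_j$. Set $f(i,j)=h_j(i)$. Then $f_j=h_j$, so $\Phi(f/\mathcal{F}\times\mathcal{G})=(b_j)_j/\mathcal{G}$.

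Combining these, $\Phi$ is an isomorphism. I expect no genuine obstacle: the whole proof is the unwinding of the defining equivalence of $\mathcal{F}\times\mathcal{G}$ against the equality and relation sets.
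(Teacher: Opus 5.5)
Your proof is correct. The paper gives no argument of its own here; it simply cites Proposition 6.2.1 of Chang--Keisler. Your currying map $f/\mathcal{F}\times\mathcal{G}\mapsto (j\mapsto f_j/\mathcal{F})/\mathcal{G}$, with equality, relations and operations checked by unwinding the definition of $\mathcal{F}\times\mathcal{G}$ section by section, is the standard verification behind that citation. Since you never use maximality of $\mathcal{F}$ or $\mathcal{G}$, your argument also gives the filter (reduced product) version mentioned in the paper's footnote.
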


In Section~\ref{sec: second-order}, we will characterize second-order classes using ultrachains, so
recall the definition of ultrachain.
\begin{definition}
A sequence $\langle\mathcal A_\xi:\xi<\kappa\rangle$ is a
\emph{$\kappa$-long ultrachain} if, for each $\mu<\kappa$,
\[
\mathcal A_{\nu}\subseteq\mathcal A_{\nu+1}
\quad\text{and}\quad
\mathcal A_{\nu+1}\cong \mathcal A_\nu^{I_\nu}/\mathcal F_\nu
\]
whenever $\mu=\nu+1$, while
$\mathcal A_\mu=\bigcup_{\nu<\mu}\mathcal A_\nu$ for limit $\mu$.
Its limit is $\bigcup_{\xi<\kappa}\mathcal A_\xi$.
\end{definition}

\begin{definition}
    Fix a signature $\tau$, an ultrafilter $\cal F$ over the set $I$,  a set of $\tau$-structures $\{{\cal A}_{i} : i\in I\}$ and consider the ultraproduct ${\cal A}=\prod_{i\in I}{\cal A}_{i}/{\cal F}$. For $0<k$ a relation \(R \subseteq {}^{k}A\)  is defined to be
decomposable in the ultraproduct ${\cal A}=\prod_{i\in I}{\cal A}_{i}/{\cal F}$, iff for every \(i \in I\) there exists a  relation \(R_i \subseteq {}^{k}A_i\) such that $R=\prod_{i\in I}R_{i}/{\cal F}$. In this case, we also say, that \(R\) can be decomposed to \(R_i,\ i \in I\) or that \(R_i,\ i \in I\) is a decomposition of \(R\).
\end{definition}

\begin{remark}
    Note that the decomposability of a relation in a given structure \(\mathcal{A}\) depends on the ultrafilter and on the factors used to form the ultraproduct. For example, if $\cal F$ is a principal ultrafilter, then every relation on $\cal A$ is decomposable, however, as we will see, this is not true in general. Moreover, in Example~\ref{ex: np-példa} we will present a case where the same structure is obtained in two different ways (from different families of structures) using the same ultrafilter $\cal F$, yet the sets of decomposable relations produced by the two constructions are not the same.

\end{remark}

\begin{remark}\label{rem:ultradoboz}
   One may think of decomposable relations as ``ultra-boxes'', and in the limit case of decomposability one should think of boxes whose sides are constant on a large set.  Moreover, we remark that Sági \cite{cl1} introduces an interesting topological notion for decomposable relations, and the properties of these notions are discussed in detail in \cite{ultratop}.

\end{remark}

In the following, we will review certain \emph{Henkin-type models}. The notion of a ``general model'' was introduced by Henkin, motivated by  to obtain a completeness theorem for second-order logic. Models of this kind have since remained of central importance in the study of second-order logic \cite{vaa}. The basic idea is that quantifiers do not range over all relations, but only over a specified collection of them. Sági employed a similar construction in his paper \cite{cl1}, combining it with a topological aspect, as we remarked. For our purposes, the following so-called \emph{decomposable-Henkin models} will be of crucial importance.

\begin{definition}\label{Henkin model} Fix a signature $\tau$, a set of $\tau$-structures $\{{\cal A}_{i} : i\in I\}$  and let ${\cal A}=\prod_{i\in I}{\cal A}_{i}/{\cal F}$  an ultraproduct, then the  pair $\big({\cal A},\Upsilon\big)$ is a decomposable-Henkin model, where  $\Upsilon$ is the set of all decomposable relation on it.


Often, when it is not clear which ultraproduct a decomposable–Henkin model is associated with, we say ``the decomposable–Henkin model \(\big({\cal A},\Upsilon\big)\) formed from the family $\{ {\cal A}_{i}\text{ : } i\in I\}$ by the ultrafilter $\cal F$''.

Furthermore, if $K$ is a class of $\tau$-structures, then $\overline {K}$ denotes the class of all decomposable–Henkin \(\big({\cal A},\Upsilon\big)\) formed from some family $\{ {\cal A}_{i}\text{ : } i\in I\}$ by an ultrafilter $\cal F$, with $\{i\in I \text{ : } {\cal A}_{i}\in K\}\in{\cal F}$.

\end{definition}

\begin{remark}
    It is clear that if $K$ is a class of $\tau$-structures and $\big({\cal A},\Upsilon\big)\in \overline{K}$, then there exists $\big({\cal A'},\Upsilon'\big)\in \overline{K}$ which is a decomposable–Henkin model formed from some family $\{ {\cal A'}_{i}\colon i\in I'\}\subseteq K$ by an ultrafilter $\cal F'$ and an isomorphism
\[
f: \big({\cal A},\Upsilon\big)\longrightarrow \big({\cal A'},\Upsilon'\big)
\]
such that $f$ induces a bijection between $\Upsilon$ and $\Upsilon'$.  (That is, for every $R\in\Upsilon$ we have $f^{*}(R)\in\Upsilon'$, and for every $S\in\Upsilon'$ we have $(f^{-1})^{*}(S)\in\Upsilon$.)

\end{remark}
\begin{definition}
    Fix the signature $\tau$, and let $\langle {\cal A}_{n} : n\in\omega\rangle$ be an $\omega$-long ultrachain with limit ${\cal A}_{\omega}$, such that
\[
{\cal A}_{n+1}={}^{I_{n}}{\cal A}_{n}/{\cal F}_{n},
\]
and $\big( {\cal A}_{n+1},\Upsilon_{n+1}\big)$ is a decomposable–Henkin model formed from the family $\{ {\cal B}_{j}\colon j\in J\}$ by the ultrafilter ${\cal G}\times{\cal F}_{1}\times\cdots\times{\cal F}_{n}$, where
\[
{\cal A}_{0}=\prod_{j\in J} {\cal B}_{j}/{\cal G}.
\]
Then $\big({\cal A}_{\omega},\Upsilon_{\mathrm{lim}}\big)$ is a limit–Henkin model,  where $\Upsilon_{\mathrm{lim}}$ is the set of  decomposable relations with respect to the family $\{ {\cal B}_{j}\colon j\in J\}$.

Often,  we say ``the limit–Henkin model $\big({\cal A}_{\omega},\Upsilon_{\mathrm{lim}}\big)$ formed from the family $\{ {\cal B}_{j}\colon j\in J\}$ by the ultrachain $\langle {\cal A}_{n} : n\in\omega\rangle$.''
\end{definition}

The following important definition concerns the Henkin semantics for decomposable-Henkin models. This is a generalization of the usual Tarski-style definition of truth.

\begin{definition}\label{def:henkin sem}(Henkin semantics)
    \item Fix a $\tau$ signature and let $L$ be the first-order language over $\tau$, let  $\big({\cal A},\Upsilon\big)$ be a decomposable-Henkin model and $\phi(R_{0},R_{1},...,R_{n-1})$ is a second-order formula where only occurs $R_{0},R_{1}...R_{n-1}$  as relation variables. Consider  $l:\{R_{0},R_{1}...R_{n-1}\}\rightarrow\Upsilon$  a function   which takes any $k$-ary relation variable into a $k$-ary relation, then:
    \begin{itemize}
        \item if $\phi(R_{0},R_{1}...R_{n-1})\in\Delta_{0\{L\cup\{R_{0},R_{1}...R_{n-1}\}\}}$ then $\big({\cal A},\Upsilon\big)\models_{\epsilon}\phi(R_{0},R_{1}...R_{n-1})[l]$, iff $$\langle{\cal A},l(R_{0}),l(R_{1})...l(R_{n-1})\rangle\models\phi$$ in the first-order way;
        \item if $\phi\equiv\exists R_{0}\psi$ then  $\big({\cal A},\Upsilon\big)\models_{\epsilon}\phi(R_{0},R_{1}...R_{n-1})[l]$, iff there is an $$l':\{R_{0},R_{1}...R_{n-1}\}\rightarrow\Upsilon$$ function ( which takes any $k$-ary relation variable into a $k$-ary relation) such $l'\overset{R_{0}}{\equiv}l$ (they differ at most on \(R_0\)) and $\big({\cal A},\Upsilon\big)\models_{\epsilon}\phi(R_{0},R_{1}...R_{n-1})[l']$;
        \item  if $\phi\equiv\forall R_{0}\psi$ then  $\big({\cal A},\Upsilon\big)\models_{\epsilon}\phi(R_{0},R_{1}...R_{n-1})[l]$, iff for all  $$l':\{R_{0},R_{1}...R_{n-1}\}\rightarrow\Upsilon$$ function (which takes any $k$-ary relation variable into a $k$-ary relation) such $l'\overset{R_{0}}{\equiv}l$ then $\big({\cal A},\Upsilon\big)\models_{\epsilon}\phi(R_{0},R_{1}...R_{n-1})[l']$.
        \end{itemize}
     The formula $\phi(R_{0},R_{1}...R_{n-1})$ is defined to be true in  $\big({\cal A},\Upsilon\big)$ ( write as $\big({\cal A},\Upsilon\big)\models_{\epsilon}\phi$) if for all  $l:\{R_{0},R_{1}...R_{n-1}\}\rightarrow\Upsilon$ function ( which takes any $k$-ary relation variable into a $k$-ary relation) $\big({\cal A},\Upsilon\big)\models_{\epsilon}\phi[l]$.

\end{definition}

\begin{remark}\label{rem: henkin inf}
    The previous definiton The previous definition extends in the obvious way to limit-Henkin models, and  if $\big( {\cal A}, \Upsilon_{\mathrm{lim}}\big)$ is a limit Henkin model, and $\phi\in SO$ is true in $\big( {\cal A}, \Upsilon_{\mathrm{lim}}\big)$, then we  write $\big( {\cal A}, \Upsilon_{\mathrm{lim}}\big)\models_{\lambda}\phi.$
\end{remark}
 We extend the usual $\mathrm{Th}$ and $\mathrm{Mod}$ operations as follows.
\begin{definition}
    Fix the signature $\tau$ and let $\cal A$, $\cal A'$ are $\tau$ structures and $\big({\cal B}, \Upsilon\big)$, $\big({\cal B'}, \Upsilon'\big)$ are decomposable-Henkin models, then:
\begin{itemize}
\item $\mathrm{Th}({\cal A})=\{\phi\in FO\text{ : }{\cal A}\models\phi\}$ and $\mathrm{Th}(\big({\cal B}, \Upsilon\big))=\{\phi\in FO\text{ : }\big({\cal B}, \Upsilon\big)\models_{\epsilon}\phi\}$;
\item if $\Gamma\subseteq SO$ is an arbitrary fragment then we denote $\mathrm{Th}_{\Gamma}({\cal A})=\{\gamma\in \Gamma\text{ : }{\cal A}\models\gamma\}$ and $\mathrm{Th}_{\Gamma}(\big({\cal B}, \Upsilon\big))=\{\gamma\in \Gamma\text{ : }\big({\cal B}, \Upsilon\big)\models_{\epsilon}\gamma\},$
\item if $\{\gamma\in \Gamma\text{ : }{\cal A}\models\gamma\}\subseteq \{\gamma'\in \Gamma\text{ : }{\cal A'}\models\gamma'\}$ then we write $\mathrm{Th}_{\Gamma}({\cal A})\leq \mathrm{Th}_{\Gamma}({\cal A'})$, and similarly define $\mathrm{Th}_{\Gamma}(\big({\cal B}, \Upsilon\big))\leq \mathrm{Th}_{\Gamma}(\big({\cal B'}, \Upsilon'\big))$.
\end{itemize}

If $\Sigma\subseteq SO$ and $\Lambda\subseteq L_{\kappa,\lambda,\mu}^{2}$ then
\begin{itemize}
\item by $\mathrm{Mod}(\Sigma)$ and $\mathrm{Mod}(\Lambda)$ we denote the classes of all $\tau$ structures in which $\Sigma$ and $\Lambda$, respectively, are true;
\item by $\mathrm{Mod}_{\epsilon}(\Sigma)$ we denote the class of all decomposable-Henkin models in which $\Sigma$ is true;
\item by $\mathrm{Mod}_{<\omega}(\Sigma)$ we denote the class of all \emph{ finite} $\tau$ structures in which $\Sigma$ is true.
\end{itemize}
\end{definition}
\begin{remark}
    If $\phi$ is some formula, then for simplicity, instead of $\mathrm{Mod}(\{\phi\})$ and its variants, we simply write $\mathrm{Mod}(\phi)$.
\end{remark}

\begin{lemma}\label{lem: Łos} Let $\tau$ be a signature,  $\cal F$ is an ultrafilter over $I$, and
     $\phi\in SO(\tau)$ is a second-order formula. Consider for $j\in J$ a decomposable-Henkin model $\big({\cal A}_{j},\Upsilon_{j}\big)$ formed from the family $\{ {\cal A}_{i,j}\text{ : } i\in I\}$ of $\tau$ structures by the ultrafilter ${\cal F}$,  and the decomposable-Henkin model  $\big( {\cal A}, \Upsilon\big)$ formed from the family $\{ {\cal A}_{i,j}\text{ : } i\in I,\text{ }j\in J \}$ by the ultrafilter ${\cal F}\times {\cal G}$. Then the following are equivalent:\begin{itemize}
    \item[(1)]  $\{ j\in J\text{ : } \big({\cal A}_{j},\Upsilon_{j}\big)\models_{\epsilon}\phi  \}\in {\cal G }$;
    \item[(2)] $\big( {\cal A}, \Upsilon\big)\models_{\epsilon}\phi$.
\end{itemize}

\end{lemma}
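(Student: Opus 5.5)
The argument is a Łoś-type induction on the complexity of $\phi$, carried out simultaneously for all assignments. By Theorem \ref{thm: fubini} we identify $\mathcal A$ with $\prod_J \mathcal A_j/\mathcal G$. Under this identification, $\mathcal A=\prod_{I\times J}\mathcal A_{i,j}/\mathcal F\times\mathcal G$ with the element $[(a_{i,j})]$ corresponding to $[([(a_{i,j})_i]_{\mathcal F})_j]_{\mathcal G}$.

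The first step is a correspondence between relations. I claim that $\Upsilon$ consists exactly of the relations of the form $\prod_J R_j/\mathcal G$ with $R_j\in\Upsilon_j$ (all arities), under this identification.
\begin{itemize}
\item If $R=\prod_{I\times J}R_{i,j}/\mathcal F\times\mathcal G$, put $R_j=\prod_I R_{i,j}/\mathcal F\in\Upsilon_j$. Applying Theorem \ref{thm: fubini} to the expanded structures $\langle \mathcal A_{i,j},R_{i,j}\rangle$ shows that the isomorphism carries $R$ onto $\prod_J R_j/\mathcal G$.
\item Conversely, given $R_j\in\Upsilon_j$ for every $j$, choose decompositions $R_j=\prod_I R_{i,j}/\mathcal F$ (axiom of choice) and run the same argument backwards.
\end{itemize}
The same works for finite tuples of relations at once.

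The main step is the induction. I prove, for every formula $\phi(R_0,\dots,R_{n-1})$ and every choice of $R_{k,j}\in\Upsilon_j$, that
\[
\{j:(\mathcal A_j,\Upsilon_j)\models_\epsilon\phi[R_{k,j}]_k\}\in\mathcal G \iff (\mathcal A,\Upsilon)\models_\epsilon\phi[\textstyle\prod_J R_{k,j}/\mathcal G]_k .
\]
Since every assignment into $\Upsilon$ arises this way, the statement about sentences follows, and also the "for all $l$" clause of Definition \ref{def:henkin sem}.
\begin{itemize}
\item \textbf{Base case.} $\phi\in\Delta_0$ is first-order in the expanded signature. This is the ordinary Łoś theorem applied to $\prod_J\langle\mathcal A_j,R_{0,j},\dots\rangle/\mathcal G$, whose expansion is exactly $\langle\mathcal A,\prod R_{k,j}/\mathcal G,\dots\rangle$ by the first step.
\item \textbf{Existential step.} For $\exists R\psi$, the direction (1)$\Rightarrow$(2) chooses witnesses $S_j\in\Upsilon_j$ on the large set, arbitrary elsewhere. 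We then take $S=\prod S_j/\mathcal G\in\Upsilon$ and apply the induction hypothesis.
\item For the converse, a witness $S\in\Upsilon$ decomposes as $\prod S_j/\mathcal G$ with $S_j\in\Upsilon_j$ by the first step. The induction hypothesis then gives the large set.
\item \textbf{Negation and connectives.} These are handled exactly as in the classical Łoś theorem, using that $\mathcal G$ is an ultrafilter. Then $\forall$ reduces to $\neg\exists\neg$.
\end{itemize}
Because the semantics quantify only over $\Upsilon$ and $\Upsilon_j$, this goes through with no appeal to arbitrary relations.

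The expected obstacle is the first step, namely that every decomposable relation over $I\times J$ really splits into decomposable relations over the $j$'s, and conversely. Here care is needed that the Fubini isomorphism is the canonical one, so that it respects the expansions; I would prove Theorem \ref{thm: fubini} in the expanded signature to settle this. A second point is that the paper's formulas may be formulas of $\Delta_0$ with negation pushed inside. To be safe I would run the induction on formulas built with $\neg,\wedge,\exists$ (first-order and second-order), treating $\forall R$ as $\neg\exists R\neg$, which is legitimate since Henkin satisfaction in Definition \ref{def:henkin sem} makes them equivalent.
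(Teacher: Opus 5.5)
Your proposal is correct and is essentially the paper's own argument: an induction on formulas in which the first-order base case is \L{}o\'{s}'s theorem, and the existential step either splits a witness $S\in\Upsilon$ as $\prod_J S_j/\mathcal G$ with $S_j\in\Upsilon_j$, or glues witnesses from the $\Upsilon_j$ into a single witness in $\Upsilon$. The differences are only presentational: the paper uses the correspondence $\Upsilon=\{\prod_J R_j/\mathcal G : R_j\in\Upsilon_j\}$ without justification, which your Fubini step supplies, and it handles free relation variables by expanding the signature rather than through assignments.
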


\begin{proof}
By induction (on the complexities of the formulas) we show that for any $\tau$ signature  and any $\phi\in SO(\tau)$, (1) and (2) are equivalent. Thus let $\tau$ be a signature.

   First assume that   $\phi\in FO(\tau)$, then the statement follows from Łoś's lemma.
Now let $\phi\in SO(\tau)$,
\[
\phi\equiv Q_{0}R_{0}\;Q_{1}R_{1}\;\dots\;Q_{n-1}R_{n-1}\;\psi,
\]
where each $Q_{i}$ is a second-order quantifier, $R_{0},\dots,R_{n-1}$ are second-order (relation) variables, and $\psi\in\Delta_{0}(\tau)$. Suppose that for all $\tau$ signature
$(1)$ and
$(2)$ are equivalent for formulas with complexities
less then $n$.

Note that it suffices to prove that (1) and (2) are equivalent in the case when $Q_{0}$ is existential. Hence assume that $Q_{0}$ is existential and let $\phi'= Q_{1}R_{1}...Q_{n-1}R_{n-1}
\psi$ such  $\phi = \exists R_{0} \phi'$ and  suppose that $\big( {\cal A},\Upsilon \big)\models_{\epsilon}\phi$. Then there exists a relation $R_{0}\in\Upsilon$
 such    $\big(\langle {\cal A},R_{0} \rangle,\Upsilon\big)\models_{\epsilon}\phi'$. Further since $R_{0}\in \Upsilon$ , there are relations $R_{0}^{j}\in\Upsilon_{j}$ such  $$R_{0} =\prod_{j\in J}R_{0}^{j} /{\cal G}.$$  By the induction assumption $$\{j\in J\text{ : }\big(\langle {\cal A}_{j},R_{0}^{j} \rangle,\Upsilon_{j}\big)\models_{\epsilon}\phi'\}\in{\cal G} $$ therefore since $R_{0}^{j}\in\Upsilon_{j}$ we have that  $$\{j\in J\text{ : }\big({\cal A}_{j},\Upsilon\big)\models_{\epsilon}\phi\}\in{\cal G}. $$
 Now assume that $$\{j\in J\text{ : }\big({\cal A}_{j},\Upsilon_{j}\big)\models_{\epsilon}\phi\}\in{\cal G}, $$ then there are $S_{0}^{j}\in\Upsilon_{j}$ such that $$\{j\in J\text{ : }\big(\langle{\cal A}_{j},S_{0}^{j} \rangle,\Upsilon_{j}\big)\models_{\epsilon}\phi'\}\in{\cal G}. $$ Consider $$S_{0} =\prod_{j\in J}S_{0}^{j} /{\cal G}$$ then by the induction assumption $\big(\langle {\cal A},S_{0} \rangle,\Upsilon\big)\models_{\epsilon}\phi'$ hence $S_{0}\in\Upsilon$, we have  $\big({\cal A},\Upsilon\big)\models_{\epsilon}\phi.$ This
completes the induction.
\end{proof}

The previous lemma generalizes to limit–Henkin models as well.

\begin{lemma}\label{lem: Łos-limit}

Let $\tau$ be a signature,  $\phi\in SO(\tau)$,  and $\big({\cal A}_{\omega},\Upsilon_{\mathrm{lim}}\big)$ a limit-Henkin model formed from the family $\{ {\cal B}_{j}\colon j\in J\}$ of $\tau$ structures by the ultrachain $\langle {\cal A}_{n} : n\in\omega\rangle$, where
\[
{\cal A}_{0}=\prod_{j\in J} {\cal B}_{j}/{\cal G}.
\] Then the following are equivalent:\begin{itemize}
    \item[(1)]$\{j\in J\text{ : } {\cal B}_{j}\models\phi\}\in {\cal G}$;
    \item[(2)] $\big({\cal A}_{\omega},\Upsilon_{\mathrm{lim}}\big)\models_{\lambda}\phi.$
\end{itemize}

\end{lemma}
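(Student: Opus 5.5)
The plan is to argue by induction on the complexity of $\phi$, in the same way as in Lemma \ref{lem: Łos}, after first making explicit what a member of $\Upsilon_{\mathrm{lim}}$ looks like. By Theorem \ref{thm: fubini}, applied repeatedly, each ${\cal A}_{n}$ is, up to the canonical identification, the ultraproduct
\[
{\cal A}_{n}\cong\prod_{(j,i_1,\dots,i_n)}{\cal B}_{j}/{\cal G}\times{\cal F}_{1}\times\cdots\times{\cal F}_{n},
\]
where the factor at index $(j,i_1,\dots,i_n)$ is ${\cal B}_j$. I will read ``decomposable with respect to the family $\{{\cal B}_j\colon j\in J\}$'' as follows: a $k$-ary $R\subseteq A_\omega^k$ belongs to $\Upsilon_{\mathrm{lim}}$ iff there are $R_j\subseteq B_j^k$ ($j\in J$) such that, for every $n$,
\[
R\cap A_n^k=\prod R_j/{\cal G}\times{\cal F}_1\times\cdots\times{\cal F}_n ,
\]
with the factor at $(j,i_1,\dots,i_n)$ being $R_j$. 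Equivalently, $R_n:=R\cap A_n^k$ is $R_0=\prod_j R_j/{\cal G}$ lifted through the iterated ultrapowers. Conversely, any family $(R_j)_{j\in J}$ determines such an $R$ as the union $\bigcup_n R_n$.

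The first step, and the one I expect to be the main obstacle, is a \emph{coherence} claim. For any finitely many $R^1,\dots,R^m\in\Upsilon_{\mathrm{lim}}$ with decompositions $(R^l_j)_j$, I want
\[
\langle {\cal A}_n,R^1_n,\dots,R^m_n\rangle\cong\bigl({}^{I_{n-1}}\langle{\cal A}_{n-1},R^1_{n-1},\dots,R^m_{n-1}\rangle\bigr)/{\cal F}_{n-1},
\]
with the inclusion ${\cal A}_{n-1}\subseteq{\cal A}_n$ being the diagonal embedding of the expanded structures. Concretely, I must check that the identification in Theorem \ref{thm: fubini} commutes with the chain embeddings and carries the lifted relations to the ultrapowers of the relations. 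I would verify this directly on representatives, since all factors in the $i$-coordinates are constant.

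Granting this, the expanded chain is elementary by Łoś's lemma, so Tarski's union theorem gives
\[
\langle{\cal A}_0,R^1_0,\dots,R^m_0\rangle\preceq\langle{\cal A}_\omega,R^1,\dots,R^m\rangle .
\]
Combining this with Łoś's lemma for ${\cal A}_0=\prod_j{\cal B}_j/{\cal G}$ settles the base case. For a first-order $\psi(R^1,\dots,R^m)$ we get
\[
\langle{\cal A}_\omega,\bar R\rangle\models\psi \iff \{j\colon\langle{\cal B}_j,\bar R_j\rangle\models\psi\}\in{\cal G},
\]
which is the base case in the form needed for the induction (with parameters).

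The induction step then follows the proof of Lemma \ref{lem: Łos}, with the induction hypothesis strengthened to formulas with free relation variables assigned to members of $\Upsilon_{\mathrm{lim}}$ and to their decompositions. It suffices to treat $\exists R_0\phi'$. If $({\cal A}_\omega,\Upsilon_{\mathrm{lim}})\models_\lambda\phi'[R_0]$ for some $R_0\in\Upsilon_{\mathrm{lim}}$, I decompose $R_0$ into $(R_0^j)_j$. The hypothesis then gives $\langle{\cal B}_j,R_0^j\rangle\models\phi'$ for ${\cal G}$-almost all $j$, hence ${\cal B}_j\models\exists R_0\phi'$ on a set in ${\cal G}$. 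Conversely, suppose $\{j\colon{\cal B}_j\models\exists R_0\phi'\}\in{\cal G}$. Since ${\cal B}_j$ carries full second-order semantics, I can choose arbitrary witnesses $S^j\subseteq B_j^{k}$ there (and arbitrary $S^j$ elsewhere). The relation $S=\bigcup_n\prod S^j/{\cal G}\times{\cal F}_1\times\cdots\times{\cal F}_n$ lies in $\Upsilon_{\mathrm{lim}}$ by definition, and the hypothesis yields $({\cal A}_\omega,\Upsilon_{\mathrm{lim}})\models_\lambda\phi'[S]$.

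Negation and the universal case are handled by duality; this uses that ${\cal G}$ is an ultrafilter and that the correspondence between $\Upsilon_{\mathrm{lim}}$ and $J$-indexed families of relations is onto up to ${\cal G}$-equivalence.
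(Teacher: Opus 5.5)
Your strategy matches the paper's:
\begin{itemize}
\item induction on complexity, with the hypothesis made uniform over expanded signatures (your free relation variables);
\item a base case from Łoś's lemma together with ${\cal A}_0\preceq{\cal A}_\omega$ for the expanded chain;
\item an existential step that decomposes a witness in one direction and lifts chosen witnesses $S^j$ along the chain in the other.
\end{itemize}
Your coherence claim and the appeal to Tarski's union theorem only make explicit what the paper uses implicitly.

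The gap is the meaning you fix for $\Upsilon_{\mathrm{lim}}$. You admit only lifts of relations that are decomposable at level $0$, with components $R_j$ indexed by $j\in J$ alone. The paper's $\Upsilon_{\mathrm{lim}}$ is larger: $R\in\Upsilon_{\mathrm{lim}}$ iff for some $m$, $R$ is the limit of the lifts along $\langle{\cal A}_k : k\ge m\rangle$ of some $R^{{\cal A}_m}\in\Upsilon_m$. Here $\Upsilon_m$ allows components indexed by the full tuple $(j,i_0,\dots,i_{m-1})$. This is what the paper's proof uses (``there is some $m\in\omega$ and relations $R_0^{{\cal A}_m}\in\Upsilon_m$ \dots''). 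It is also the reading under which the construction in Lemma~\ref{lem: főlemma} works: there the singletons $\{b\}$ with $b\in B_n$ are used as members of $\Upsilon^0_n$, and for $b\notin B_0$ these are not lifts of level-$0$ relations. Under this reading, your step ``decompose $R_0$ into $(R_0^j)_j$'' in the direction (2)$\Rightarrow$(1) is unavailable. Your parametrised base case likewise covers only level-$0$ parameters.

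The repair is the re-basing the paper performs. Suppose a witness, or finitely many parameters, are decomposable at level $m$ (take the maximum level).
\begin{itemize}
\item Use Theorem~\ref{thm: fubini} to view ${\cal A}_m$ as an ultraproduct of copies of the ${\cal B}_j$ over $J\times I_0\times\dots\times I_{m-1}$, with respect to the product ultrafilter.
\item The tail $\langle{\cal A}_{m+k}:k\in\omega\rangle$ is then an ultrachain starting from ${\cal A}_m$ with the same $\Upsilon_{\mathrm{lim}}$. Lifts of lower-level relations become level-$m$ relations whose components ignore the extra coordinates.
\item Apply your base case and induction hypothesis to this tail. This needs the hypothesis quantified over all families and chains, which is harmless in an induction on formulas. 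You obtain $\{(j,\bar\imath) : \langle{\cal B}_j,R_0^{(j,\bar\imath)}\rangle\models\phi'\}$ in the product ultrafilter.
\item Hence the set $\{(j,\bar\imath) : {\cal B}_j\models\exists R_0\phi'\}$, which depends only on $j$, is in the product ultrafilter, so $\{j : {\cal B}_j\models\phi\}\in{\cal G}$.
\end{itemize}
The paper achieves this by passing through $({\cal A}_m,\Upsilon_m)$ and Lemma~\ref{lem: Łos}. With that addition your proof is complete. Without it, you have proved the lemma only for the part of $\Upsilon_{\mathrm{lim}}$ generated at level $0$.
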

\begin{proof}
   By induction (on the complexities of the formulas) we show that for any $\tau$ signature  and any $\phi\in SO(\tau)$, (1) and (2) are equivalent.

   Thus let $\tau$ be a signature and let $\big({\cal A}_{\omega},\Upsilon_{\mathrm{lim}}\big)$ be a limit-Henkin model formed from the family $\{ {\cal B}_{j}\colon j\in J\}$ of $\tau$ structures by the ultrachain $\langle {\cal A}_{n} : n\in\omega\rangle$.

   First assume that   $\phi\in FO(\tau)$.  The statement follows from Łoś's Lemma together with the fact that ${\cal A}_{0}$ is an elementary submodel of ${\cal A}_{\omega}$.

Now suppose that $\phi\in SO(\tau)$ and
\[
\phi\equiv Q_{0}R_{0}\;Q_{1}R_{1}\;\dots\;Q_{l-1}R_{l-1}\;\psi,
\]
where each $Q_{i}$ is a second-order quantifier, $R_{0},\dots,R_{l-1}$ are second-order (relation) variables, and $\psi\in\Delta_{0}$.  Furthermore, assume that for every $\tau$-signature the statement holds for formulas of complexity less than $l$.  By Lemma~\ref{lem: Łos} it follows that for every $\xi\in\Delta_{l-1}$ we have
\[
\big({\cal A}_{\omega},\Upsilon_{\mathrm{lim}}\big)\models_{\lambda}\xi
\quad\text{iff}\quad
\big({\cal A}_{n},\Upsilon_{n}\big)\models_{\epsilon}\xi
\ \text{ for all }n\in\omega.
\]

As noted in Lemma~\ref{lem: Łos}, it is again sufficient to consider the case that $Q_{0}$ is existential.
 Hence assume that $Q_{0}$ is existential
and write $\phi' = Q_{1}R_{1}\dots Q_{l-1}R_{l-1}\,\psi$, so that $\phi=\exists R_{0}\,\phi'$.  Suppose
\[
\big({\cal A}_{\omega},\Upsilon_{\mathrm{lim}}\big)\models_{\lambda}\phi,
\]
then there exists a relation $R_{0}\in\Upsilon_{\mathrm{lim}}$ such that
\[
\big(\langle {\cal A}_{\omega},R_{0}\rangle,\Upsilon\big)\models_{\lambda}\phi'.
\]
Since $R_{0}\in\Upsilon_{\mathrm{lim}}$, there is some $m\in\omega$ and relations $R_{0}^{\mathcal{A}_{m}}\in\Upsilon_{m}$ with the property that $\langle{\cal A}_{\omega},R_{0}\rangle$ is the limit of the ultrachain
\[
\big\langle \langle{\cal A}_{k},R_{0}^{{\cal A}_{k}}\rangle : m<k<\omega\big\rangle,
\]
where for each $k>m$
\[
\langle{\cal A}_{k+1},R_{0}^{{\cal A}_{k+1}}\rangle
= {}^{I_{k}}\langle{\cal A}_{k},R_{0}^{{\cal A}_{k}}\rangle/{\cal F}_{k}.
\]
By the induction hypothesis we obtain $\big(\langle{\cal A}_{m},R_{0}^{{\cal A}_{m}}\rangle, \Upsilon_{m}\big)\models_{\epsilon}\phi'$, therefore $$\big({\cal A}_{m}, \Upsilon_{m}\big)\models_{\epsilon}\phi$$ and hence by Lemma~\ref{lem: Łos} it follows that
\[
\{j\in J : {\cal B}_{j}\models\phi\}\in{\cal G}.
\]

Now suppose that $\{j\in J\text{ : } {\cal B}_{j}\models\phi\}\in {\cal G}$,  then there are relations $S_{0}^{j} \subseteq {}^{k} B_{j}$ for some $0<k$ such that
$$\{j\in J\text{ : } \langle{\cal B}_{j},S_{0}^{j}\rangle\models\phi'\}\in {\cal G}$$
 where $\phi' = Q_{1}R_{1}\dots Q_{l-1}R_{l-1}\,\psi$, and let
$$\langle{\cal A}_{0},S_{0}^{{\cal A}_{0}}\rangle=\prod_{j\in J} \langle{\cal B}_{j},S_{0}^{j}\rangle/{\cal G}.$$
Moreover, for every $n\in\omega$ there is $S_{0}^{\mathcal{A}_{n}}\in\Upsilon_{n}$ and $S_{0}^{\mathcal{A}_{\omega}}\in\Upsilon_{\mathrm{lim}}$ relations, so that
\[
\big\langle \langle {\cal A}_{n},S_{0}^{{\cal A}_{n}}\rangle : n\in \omega \big\rangle
\]
is an ultrachain whose limit is $\langle {\cal A}_{\omega},S_{0}^{\mathcal{A}_{\omega}}\rangle$.  Then
$\big(\langle {\cal A}_{\omega}, S_{0}^{\mathcal{A}_{\omega}}\rangle, \Upsilon_{\mathrm{lim}}\big)$
is a limit-Henkin model formed from the family $\{\langle{\cal B}_{j},S_{0}^{j}\rangle\colon j\in J\}$ by the ultrachain $\big\langle \langle{\cal A}_{n},S_{0}^{{\cal A}_{n}}\rangle : n\in \omega\big\rangle.$
Then, by the induction hypothesis, since
\[
\{\,j\in J\colon \langle{\cal B}_{j},S_{0}^{j}\rangle\models\phi'\,\}\in {\cal G},
\]
we obtain
\[
\big(\langle {\cal A}_{\omega}, S_{0}^{\mathcal{A}_{\omega}}\rangle, \Upsilon_{\mathrm{lim}}\big)\models_{\lambda}\phi',
\] therefore
\[
\big( {\cal A}_{\omega}, \Upsilon_{\mathrm{lim}}\big)\models_{\lambda}\phi.
\]
This completes the proof of the lemma.

\end{proof}
\subsection{Spaces of formulas}\label{spf}
\label{spf}
As further preparation, we survey topological spaces defined on certain sets of formulas.
The notion of these spaces is very similar to the well-known Stone space of first-order types: each such space is a zero-dimensional Hausdorff space.
We will see that the closed, open, and clopen subsets of these spaces stand in a straightforward correspondence with classes of structures that can be axiomatized by the formulas determining the space.

\begin{definition}\label{def: toptér}
Fix a $\tau$ signature and let \( \Gamma\) be a fragment  of second- order formulas, which is closed under Boolean operations (for example \(\Gamma=\Delta_n\) or \(\Gamma=SO\) ect...), and suppose \(|\Gamma|=\kappa\). Fix a well-ordering  $\Gamma=\{\gamma _i : {i\in \kappa}\}$, and let $$C(\Gamma)=\{x\in 2^{\kappa} \text{ : there is a model } {\cal A}, \text{ such  } \gamma_{i}\in Th_{\Gamma}({\cal A}) \text{ if and only } x(i)=1 \} $$ a subspace of the $\kappa$-Cantor set $2^{\kappa}$ with a product topology.
 If $i\in\kappa$ then $$N_{i}=\{x\in C(\Gamma) : x(i)=1\}$$
is a basic clopen set. At this point we note that, when we consider $C(\Gamma)$ as the space associated to $\Gamma$, we always associate to $\Gamma$ the ordering fixed above.

 Note that, since the class $\Gamma$ is closed under Boolean operations, the set $$\{ N_{i} : i\in \kappa\}$$ form a basis for $C(\Gamma)$.
If $\Gamma$ is countable then we fix the ultra-metric $d$ on $C(\Gamma)$: for $x,y\in C(\Gamma)$ $$d(x,y)=\mathrm{max}\{ 2^{-i} : x(i)\neq y(i)\}.$$

\end{definition}

The following  lemma clarify the relationship between classes axiomatizable by formulas from \(\Gamma\) and the closed subsets of \(\mathcal{C}(\Gamma)\).

\begin{lemma}\label{lem: zárt}
   Fix a signature $\tau$.
    A class $K$ of $\tau$-structures can be axiomatizable by a set of  $\Gamma$-formulas if and only if the set $$T_{K}=\{x\in 2^{\kappa} \text{ : there is a model } {\cal A}\in K, \text{ such  } \gamma_{i}\in Th_{\Gamma}({\cal A}) \text{ if and only } x(i)=1 \} $$ is a closed subset of $C(\Gamma)$.

\end{lemma}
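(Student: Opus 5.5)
We prove the two implications separately. Throughout we use that $\Gamma$ is closed under Boolean operations, so the sets $N_i$ form a basis of $C(\Gamma)$, and also that every complement $C(\Gamma)\setminus N_i$ is again some $N_j$, namely for $\gamma_j\equiv\neg\gamma_i$. We also assume, as is implicit in the statement, that $K$ is closed under $\Gamma$-equivalence: if ${\cal A}\in K$ and $\mathrm{Th}_\Gamma({\cal B})=\mathrm{Th}_\Gamma({\cal A})$, then ${\cal B}\in K$. Without this, neither side can hold in general. For a model ${\cal A}$ write $x_{\cal A}\in C(\Gamma)$ for its characteristic point, so that $T_K=\{x_{\cal A}:{\cal A}\in K\}$.

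\textbf{($\Rightarrow$).} Suppose $K=\mathrm{Mod}(\Sigma)$ with $\Sigma\subseteq\Gamma$, and let $S=\{i\in\kappa:\gamma_i\in\Sigma\}$. We claim that
\[
T_K=\bigcap_{i\in S} N_i .
\]
If $x=x_{\cal A}$ lies in the right-hand side, then ${\cal A}\models\Sigma$, so ${\cal A}\in K$ and $x\in T_K$. The converse inclusion is immediate. Each $N_i$ is clopen, so $T_K$ is an intersection of closed sets and hence closed in $C(\Gamma)$.

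\textbf{($\Leftarrow$).} Suppose $T_K$ is closed. Put $\Sigma=\mathrm{Th}_\Gamma(K)=\{\gamma_i:\ x(i)=1 \text{ for all } x\in T_K\}$. Then $K\subseteq\mathrm{Mod}(\Sigma)$ trivially, and we must show the reverse inclusion. Take ${\cal B}\models\Sigma$ and suppose, for contradiction, that $x_{\cal B}\notin T_K$. Since $C(\Gamma)\setminus T_K$ is open and the $N_i$ form a basis, there is a basic open set $N_i$ with $x_{\cal B}\in N_i$ and $N_i\cap T_K=\emptyset$. Now $N_i\cap T_K=\emptyset$ means that $\gamma_i$ fails in every model of $K$, so $\neg\gamma_i\in\Sigma$. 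Hence ${\cal B}\models\neg\gamma_i$, which contradicts $x_{\cal B}(i)=1$. Therefore $x_{\cal B}\in T_K$, so $x_{\cal B}=x_{\cal A}$ for some ${\cal A}\in K$. The two structures are then $\Gamma$-equivalent, and by the closure assumption ${\cal B}\in K$.

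The only delicate points are the following.
\begin{itemize}
\item[(a)] We need the basis claim, namely that finite intersections of the $N_i$ are again of the form $N_j$. This follows from closure of $\Gamma$ under conjunction, which also guarantees that a single basic neighbourhood suffices in ($\Leftarrow$).
\item[(b)] We need closure of $K$ under $\Gamma$-equivalence, which must be read into the statement.
\end{itemize}
Neither point is a genuine obstacle; the argument is the standard Stone-space duality.
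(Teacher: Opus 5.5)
Your proof is correct and is essentially the paper's argument. The paper writes the closed set $T_K$ as an intersection $\bigcap_{j\in J}N_j$ of basic clopen sets and takes $\{\gamma_j : j\in J\}$ as axioms, which is what your pointwise argument with $\Sigma=\mathrm{Th}_\Gamma(K)$ (the largest such $J$) does. Your explicit hypothesis that $K$ is closed under $\Gamma$-equivalence is genuinely needed: for $\Gamma=FO$, the isomorphism class of $(\mathbb{N},<)$ gives a closed singleton $T_K$ without being axiomatizable. The paper uses this hypothesis tacitly in its final step $K=\mathrm{Mod}(\{\gamma_j : j\in J\})$.
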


\begin{proof}
All of this follows from the fact that \(\{N_i : i\in\kappa\}\) is a basis for \(C(\Gamma)\).

    Let $K$ axiomatizable by a set of  $\Gamma$-formulas $\Sigma$, then $$\bigcap_{\phi_{i}\in\Sigma}N_i=T_K$$ which is closed. Conversely assume that $T_K$ is a closed subset of $C(\Gamma)$. Since \(\{N_i : i\in\kappa\}\) is a basis, there is a $J\subseteq\kappa$, such $$T_{K}=\bigcap_{j\in J}N_j.$$ Then $K=\mathrm{Mod}(\{\gamma_{j}: j\in J\}).$

\end{proof}

\begin{remark}\label{rem:metric}
     Fix a signature $\tau$ and assume that $\Gamma$ is countable. Let a class  $\tau$-structures $K$, then $K$ is  axiomatizable by a single $\Gamma$-formula if and only if, $d(T_{K},T_{K^c})>0$.
\end{remark}

Consider now $C(FO)$, and for simplicity assume that the signature $\tau$ is countable, so $FO=\{\phi_{i}\text{ : } i\in \omega\}$. Note that $C(FO)$ is compact: indeed, let $(x_{n})_{n\in \omega}\subset C(FO)$ be a Cauchy sequence. By the definition of $C(FO)$ there are $\tau$-structures ${\cal A}_n$ such that $\phi_{i}\in \mathrm{Th}({\cal A}_{n}) \text{ if and only if } x_{n}(i)=1$. Since $(x_{n})_{n\in \omega}\subset C(FO)$ is a Cauchy sequence, for each $i$ either $$\{ n\in\omega\text{ : } \phi_{i}\in \mathrm{Th}({\cal A}_{n})\}$$ or $$\{ n\in\omega\text{ : } \neg\phi_{i}\in \mathrm{Th}({\cal A}_{n})\}$$ is finite. Let ${\cal F}$ be a regular ultrafilter over $\omega$ and let ${\cal A}= \prod_{n \in \omega} {\cal A}_{n}/{\cal F}$; then $\mathrm{lim}x_{n}= x$ where $x(i)=1$ if and only if $\phi_{i}\in Th({\cal A})$. Therefore $C(FO)$ is closed in $2^{\omega}$, and hence compact.

In Theorem~\ref{thm: NP} we will give a second-order formula $\psi$ which expresses that a structure is infinite, and let
$$\phi_{n}\equiv=\exists x_{0}... \exists x_{n-1}\bigwedge_{i, j\in n,\; i\neq j}x_{i}\neq x_{j},$$
be the formula that expresses that the universe of a structure has $n$ elements, and let $\Sigma=\{\neg \psi, \phi_{n}\textrm{ : } 0<n\}$.

Consider $\Gamma$ the Boolean closure of $\Sigma$. Note that for every $\sigma\in[\Sigma]^{<\omega}$ there is a structure ${\cal A}_{\sigma}$ in which ${\cal A}_{\sigma}\models \bigwedge \sigma$.  However,  there is a convergent subsequence of  $\{T_{\{{\cal A}_{\sigma}\}}\text{ : }\sigma\in[\Sigma]^{<\omega}\}$ that has no limit in $C(\Gamma)$. Hence $C(\Gamma)$ is not closed in $2^{\kappa}$.

The previous two examples show that the compactness of $C(\Gamma)$ depends on whether the compactness theorem holds for the given set of formulas.

We will see that the compactification of $C(\Gamma)$ plays a central role in the understanding of finitely axiomatizable classes.

\begin{definition}
    Let $\Gamma$ be a fragment of second-order formulas which closed under Boolean operations and $|\Gamma|=\kappa$. Then $\overline{C(\Gamma)}$ denotes the topological closure of $C(\Gamma)$ in $2^{\kappa}$, let \begin{equation*}
\widetilde{C(\Gamma)}=\left\{\,x\in 2^{\kappa}\text{ : }
\begin{array}{@{}l@{}}
\text{there is a decomonable-Henkin model } \big({\cal A},\Upsilon\big),\\[2pt]
\text{such  } \gamma_{i}\in Th_{\Gamma}(\big({\cal A},\Upsilon\big)) \text{ if and only } x(i)=1
\end{array}
\right\},
\end{equation*}
and if $K$ be a class of $\tau$-structures, then we denote  \begin{equation*}
T_{\overline{K}}=\left\{\,x\in 2^{\kappa}\text{ : }
\begin{array}{@{}l@{}}
\text{there is a decomonable-Henkin model } \big({\cal A},\Upsilon\big)\in \overline{K},\\[2pt]
\text{such  } \gamma_{i}\in Th_{\Gamma}(\big({\cal A},\Upsilon\big)) \text{ if and only } x(i)=1
\end{array}
\right\}.
\end{equation*}

\end{definition}
\begin{remark}
We denote the basis of $\widetilde{C(\Gamma)}$ by $\{ N_{i}\text{ : }i\in I\}$. The definition in Definition~\ref{def: toptér} can be carried over to this setting as well.

\end{remark}
\begin{lemma}\label{lem: zárthenk}
    Let $\Gamma$ be as before, then we have $\overline{C(\Gamma)}=\widetilde{C(\Gamma)}.$ Moreover if  $K$ is a class of $\tau$-structures, then $\overline{T_{K}}=T_{\overline{K}}$, where $\overline{T_{K}}$ is the closer of $T_{K}$ in $2^{\kappa}.$
\end{lemma}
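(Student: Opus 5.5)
We prove the two inclusions of $\overline{T_{K}}=T_{\overline{K}}$; the first statement is the special case where $K$ is the class of all $\tau$-structures, since then $T_K=C(\Gamma)$ and $T_{\overline{K}}=\widetilde{C(\Gamma)}$.

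\textbf{Inclusion $\overline{T_K}\subseteq T_{\overline{K}}$.} Let $x\in\overline{T_K}$. Since $\Gamma$ is closed under Boolean operations, every basic neighbourhood of $x$ in $2^\kappa$ is determined by a finite set $\sigma\subseteq\Gamma$. For such $\sigma$, let $\gamma_\sigma$ be the conjunction of the members of $\sigma$ that $x$ marks true together with the negations of those it marks false. Then $x\in\overline{T_K}$ gives, for every finite $\sigma$, a structure ${\cal A}_\sigma\in K$ with ${\cal A}_\sigma\models\gamma_\sigma$.

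Let $I=[\Gamma]^{<\omega}$, and let ${\cal F}$ be an ultrafilter on $I$ containing every cone $\widehat{\sigma}=\{\sigma'\in I: \sigma\subseteq\sigma'\}$. This is possible because the cones have the finite intersection property. Put ${\cal A}=\prod_{\sigma\in I}{\cal A}_\sigma/{\cal F}$ with its decomposable relations $\Upsilon$; then $({\cal A},\Upsilon)\in\overline{K}$.

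For each $\gamma_i$ the set $\{\sigma: {\cal A}_\sigma\models\gamma_i\}$ contains $\widehat{\{\gamma_i\}}$ if $x(i)=1$, and is disjoint from it if $x(i)=0$. We now apply Lemma~\ref{lem: Łos} with ${\cal F}$ playing the role of ${\cal G}$ and a principal (trivial, one-point) inner ultrafilter. Here each $({\cal A}_j,\Upsilon_j)$ is just ${\cal A}_j$ with all relations decomposable, so $\models_\epsilon$ coincides with full second-order truth. This gives $({\cal A},\Upsilon)\models_\epsilon\gamma_i$ iff $x(i)=1$, hence $x\in T_{\overline{K}}$. This is the main technical point: we must check that the product of a principal ultrafilter with ${\cal F}$ yields exactly the same decomposable-Henkin model (up to the natural isomorphism respecting $\Upsilon$, as in the Remark after Definition~\ref{Henkin model}). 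That check is routine.

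\textbf{Inclusion $T_{\overline{K}}\subseteq\overline{T_K}$.} Let $x$ be realised by $({\cal A},\Upsilon)\in\overline{K}$, formed from $\{{\cal A}_i:i\in I\}$ by ${\cal F}$ with $\{i:{\cal A}_i\in K\}\in{\cal F}$. Fix a basic neighbourhood of $x$, given by a single formula $\gamma$ (again using Boolean closure) with $({\cal A},\Upsilon)\models_\epsilon\gamma$. Again view $({\cal A},\Upsilon)$ as formed via the product of a principal ultrafilter with ${\cal F}$, and apply Lemma~\ref{lem: Łos}. This gives $\{i:{\cal A}_i\models\gamma\}\in{\cal F}$. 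Intersecting with the $K$-set yields some ${\cal A}_i\in K$ with ${\cal A}_i\models\gamma$, so the point of $T_K$ determined by ${\cal A}_i$ lies in the neighbourhood. Hence $x\in\overline{T_K}$.

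The only delicate step is the identification of an ordinary structure ${\cal A}_j$ with the decomposable-Henkin model it forms under a principal ultrafilter, so that Lemma~\ref{lem: Łos} transfers genuine second-order truth in the factors to $\models_\epsilon$ in the ultraproduct. I would state it as a small claim before running the two inclusions.
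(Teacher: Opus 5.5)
Your proposal is correct and is essentially the paper's argument. For $\overline{T_K}\subseteq T_{\overline{K}}$ both use an ultraproduct indexed by finite sets with an ultrafilter containing all cones, and for the converse both pull a finite conjunction of literals back to a factor in $K$, in each case via the \L{}o\'s-type lemma for decomposable-Henkin models. The only differences are organisational: you prove the ``moreover'' part directly and specialise to $K$ being all $\tau$-structures, whereas the paper does the reverse. You also make explicit the trivial-inner-ultrafilter instance of that lemma, which the paper uses without comment.
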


\begin{proof}
    First we show that $\overline{C(\Gamma)}\subseteq\widetilde{C(\Gamma)}$. For this, it suffices to prove that: for all $x\in 2^{\kappa}$ such for every basic open set $U\subseteq 2^{\kappa}$, we have  $x\in U\implies U\cap C(\Gamma)\neq\emptyset$, then $x\in\widetilde{C(\Gamma)}$. Pick $x\in 2^{\kappa}$ such for every basic open set $U\subseteq 2^{\kappa}$, if  $x\in U$ then $ U\cap C(\Gamma)\neq\emptyset$, and consider the set $$\Sigma=\{\gamma_{i}\text{ : } x(i)=1\}\cup\{(\neg\gamma_{i})\text{ : } x(i)=0\}.$$ Then  the hypothesis implies,  for every $j\in [\Sigma]^{<\omega}$ there is a $\tau$-structure ${\cal A}_{j}$ such that ${\cal A}_{j}\models\bigwedge j$. Note that the family $\{j^{\partial}\text{ :} j\in [\Sigma]^{<\omega}\}$ has the finite intersection property, and hence extends to an ultrafilter ${\cal F}$ over $[\Sigma]^{<\omega}$, where $$j^{\partial}=\{j'\in [\Sigma]^{<\omega}\text{ : } j\subseteq j'\}.$$ Let \(\big({\cal A},\Upsilon\big)\) be the decomposable–Henkin model  formed from the family $$\{ {\cal A}_{j}\text{ : } j\in [\Sigma]^{<\omega}\}$$ by the ultrafilter $\cal F$, then  by Lemma~\ref{lem: Łos}, we have $\Sigma=\mathrm{Th}_{\Gamma}((\big({\cal A},\Upsilon\big))$ therefore $x\in\widetilde{C(\Gamma)}.$

    Now we show that $\widetilde{C(\Gamma)}\setminus\overline{C(\Gamma)}=\emptyset$. Suppose indirectly that there is $x\in\widetilde{C(\Gamma)}\setminus\overline{C(\Gamma)}$. Then on the one hand there exist a decomposable-Henkin model $\big({\cal B},\Upsilon\big)$ formed from the family $\{ {\cal B}_{j}\text{ : } j\in j\in J\}$ by the ultrafilter $\cal G$ with $\big({\cal B},\Upsilon\big)\models_{\epsilon}\gamma_{i}$ iff $x(i)=1$ for all $i\in \kappa$. On the other hand since $x\notin\overline{C(\Gamma)}$, there is a basic open set $U\subset 2^{\omega}$ with $x\in U$ but $U\cap C(\Gamma)=\emptyset$, hence there are $I,I'\in[\kappa]^{<\omega}$ such that $$\big({\cal B},\Upsilon\big)\models_{\epsilon}\big(\bigwedge_{i\in I} \gamma_{i}\big)\land\big(\bigwedge_{i'\in I'}(\neg \gamma_{i'})\big)$$ while $$\mathrm{Mod}(\big(\bigwedge_{i\in I} \gamma_{i}\big)\land\big(\bigwedge_{i'\in I'}(\neg \gamma_{i'})\big))=\emptyset.$$ This is a contradiction, since by Lemma~\ref{lem: Łos} $$\emptyset\neq\{j\in J \text{ : } {\cal B}_{j}\models\big(\bigwedge_{i\in I} \gamma_{i}\big)\land\big(\bigwedge_{i'\in I'}(\neg \gamma_{i'})\big)\}\in \cal G.$$

The moreover part of the lemma can be proved in the same way.
 Therefore we proved the lemma.
\end{proof}

\begin{lemma}\label{lem: 4eqv}
     Let $\Gamma$ be as before and let $K$ be a class of $\tau$-structures, then the following are equivalent: \begin{itemize}\label{lem: TK}
         \item[(1)] there is a formula $\gamma\in \Gamma,$ such $K=\mathrm{Mod}(\gamma)$;\item[(2)]  there  is a formula $\gamma\in \Gamma,$ such $\overline{K}=\mathrm{Mod}_{\epsilon}(\gamma)$ ;
         \item[(3)] both $\overline{K}$ and $\overline{K^{c}}$ can be axiomatizable by
a set of $\Gamma$-formulas (that is, there are $\Lambda_{0},\Lambda_{1}\subseteq \Gamma,$ such $\overline{K}=\mathrm{Mod}_{\epsilon}(\Lambda_{0})$ and $\overline{K^{c}}=\mathrm{Mod}_{\epsilon}(\Lambda_{1})$);\item[(4)]  $T_{\overline{K}}\cap T_{\overline{K^{c}}}=\emptyset$.
     \end{itemize}
\end{lemma}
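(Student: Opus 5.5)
I will prove the cycle $(1)\Rightarrow(2)\Rightarrow(3)\Rightarrow(4)\Rightarrow(1)$. The tools are Lemma~\ref{lem: Łos} (a decomposable–Henkin model in $\overline{K}$ satisfies exactly those $\Gamma$-formulas that hold on a $\mathcal F$-large set of its factors) and Lemma~\ref{lem: zárthenk} ($T_{\overline{K}}=\overline{T_K}$, with the analogous statement for $K^c$). Compactness of $2^{\kappa}$ supplies the finiteness needed in the last step.

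For $(1)\Rightarrow(2)$, let $K=\mathrm{Mod}(\gamma)$. If $(\mathcal A,\Upsilon)\in\overline K$ is formed from $\{\mathcal A_i\}$ by $\mathcal F$, then $\{i:\mathcal A_i\models\gamma\}\in\mathcal F$. Viewing each $\mathcal A_i$ as a decomposable–Henkin model via a principal (trivial) ultrafilter, Lemma~\ref{lem: Łos} gives $(\mathcal A,\Upsilon)\models_\epsilon\gamma$. Conversely, suppose a decomposable–Henkin model built from $\{\mathcal A_i\}$ by $\mathcal F$ satisfies $\gamma$. By the same lemma, $\{i:\mathcal A_i\models\gamma\}=\{i:\mathcal A_i\in K\}\in\mathcal F$, so the model lies in $\overline K$. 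For $(2)\Rightarrow(3)$, take $\Lambda_0=\{\gamma\}$ and $\Lambda_1=\{\neg\gamma\}$. Since $K^c=\mathrm{Mod}(\neg\gamma)$, the argument just given shows $\overline{K^c}=\mathrm{Mod}_\epsilon(\neg\gamma)$; here I use that $\overline K$ and $\overline{K^c}$ partition all decomposable–Henkin models, because $\mathcal F$ is an ultrafilter. For $(3)\Rightarrow(4)$, use exactly this partition. If $x\in T_{\overline K}\cap T_{\overline{K^c}}$, choose witnesses $M\in\overline K$ and $M'\in\overline{K^c}$ with the same $\Gamma$-theory. Then $M'\models_\epsilon\Lambda_0$, so $M'\in\overline K\cap\overline{K^c}=\emptyset$, a contradiction.

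The main step is $(4)\Rightarrow(1)$. By Lemma~\ref{lem: zárthenk}, $T_{\overline K}=\overline{T_K}$ and $T_{\overline{K^c}}=\overline{T_{K^c}}$ are closed subsets of the compact space $2^\kappa$, and hence compact. They are disjoint by (4). Since basic clopen sets in $2^\kappa$ are Boolean combinations of finitely many coordinates, a standard compactness argument gives a single clopen $U$ with $T_{\overline K}\subseteq U$ and $U\cap T_{\overline{K^c}}=\emptyset$. This $U$ is a finite union of finite conjunctions of conditions $x(i)=1$ or $x(i)=0$. Because $\Gamma$ is closed under Boolean operations, $U\cap\widetilde{C(\Gamma)}=N_j\cap\widetilde{C(\Gamma)}$ for some $\gamma_j\in\Gamma$.

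Then every $\mathcal A\in K$ satisfies $\gamma_j$, since its theory lies in $T_K\subseteq U$. Every $\mathcal A\notin K$ fails $\gamma_j$, since its theory lies in $T_{K^c}$, which misses $U$. Hence $K=\mathrm{Mod}(\gamma_j)$. The delicate point is the identification of the points of $T_K$ with genuine $\Gamma$-theories, so that membership in $U$ is read off as truth of $\gamma_j$. This is where I will carefully invoke that $\Gamma$ is closed under negation and conjunction.
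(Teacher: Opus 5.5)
Your proof is correct and follows essentially the paper's route. The easy implications use Łoś's lemma for decomposable–Henkin models together with the partition $\overline{K^{c}}=\overline{K}^{c}$, and (4)$\Rightarrow$(1) uses the closure identity $T_{\overline{K}}=\overline{T_K}$ plus compactness to extract a single Boolean combination of $\Gamma$-formulas. The differences are cosmetic: the paper works inside $\widetilde{C(\Gamma)}$, uses $T_{\overline{K}}\cup T_{\overline{K^{c}}}=\widetilde{C(\Gamma)}$ to see that $T_{\overline{K}}$ is clopen, and obtains (2) first, while you separate the two disjoint compact sets by a clopen subset of $2^{\kappa}$ and read off (1) directly. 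One small fix: in (2)$\Rightarrow$(3), argue only from the partition, since $K^{c}=\mathrm{Mod}(\neg\gamma)$ presupposes (1).
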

\begin{proof}
     Note that for any decomposable-Henkin model \(\big({\cal A},\Upsilon\big)\) formed from the family
$\{ {\cal A}_{i}\text{ : } i\in I\}$
by the ultrafilter $\cal F$ we have
$$I_{0}=\{ {\cal A}_{i}\in K \text{ : } i\in I\} \quad\quad I_{1}=\{ {\cal A}_{i}\in K^{c} \text{ : } i\in I\}$$
and either $I_{0}\in {\cal F}$ or $I_{1}\in{\cal F}$. Hence for every decomposable-Henkin model \(\big({\cal A},\Upsilon\big)\) either \(\big({\cal A},\Upsilon\big)\in \overline{K}\) or \(\big({\cal A},\Upsilon\big)\in \overline{K^{c}}\) (of course only one can hold). So we have $\overline{K^{c}}= \overline{K}^{c}$. From this we obtain that (2) and (1) are equivalent using Lemma~\ref{lem: Łos}. (2) implies (3) and (3) implies (4) obviously.

Now assume that (4) holds, so $T_{\overline{K}}$ and $T_{\overline{K^{c}}}$ are disjoint  sets in $\widetilde{C(\Gamma)}$. Then, by Lemma~\ref{lem: zárthenk}, $T_{\overline{K}}$ and $T_{\overline{K^{c}}}$ are closed in $\widetilde{C(\Gamma)}$ and by  the previous remark, $T_{\overline{K}}\cup T_{\overline{K^{c}}}=\widetilde{C(\Gamma)}.$ Since $T_{\overline{K^{c}}}$ is closed, there is $J\subseteq \kappa$ such that
$$T_{\overline{K^{c}}}=\bigcap_{j\in J} (N_{j})^{c},$$
 hence $\{N_{j}\text{ : }j\in J\}$ is an open cover of $T_{\overline{K}}$. Because $\widetilde{C(\Gamma)}$ is compact   and $T_{\overline{K}}$ is closed in $\widetilde{C(\Gamma)}$, we can choose $J_{0}\in[J]^{<\omega}$ such that
$$T_{\overline{K}}=\bigcup_{j\in {J_{0}}}N_{j}$$
and thus $$\overline{K}=\mathrm{Mod}(\bigvee_{j\in J_{0}}\gamma_{j}).$$ Therefore, (2)  holds, which proves  the lemma.

\end{proof}
As a final preparatory step, we recall the definition of the second-order Stone space, and establish a lemma showing that the ultraproduct can be regarded as a closure operator on certain spaces of formulas, similarly to what we saw in Lemma~\ref{lem: zárthenk}.

\begin{definition}
    Fix a signature $\tau$, an infinite cardinal $\kappa$, and consider the set $T$ of $\kappa$-2-types, moreover let $s=(s_{i})_{i\in\kappa}$ and $F$ be as in Definition~\ref{def: type}. Assume $|F|=\lambda$ and fix a well-ordering $\{\phi_{j}\text{ : }j\in\lambda\}=F$. Then $T$ naturally inherits the topology from $2^{\lambda}$, generated by the clopen sets
\[
N_{i}=\{p\in T\text{ : } \phi_{i}\in p\},
\]
and denote this topological space by $S^{2}_{\kappa}(\tau)$.

Let $\cal A$ be a $\tau$-structure, and let $$\overrightarrow{R_{\kappa}}=\{R_i : i\in\kappa\}$$ be a $\kappa$-long sequence of relations on $\cal A$, such that for every $i\in\kappa$ the relation $R_{i}$ is $s_{i}$-ary (we also say that the relations $\overrightarrow{R_{\kappa}}$ are of kind $s$). Then the $\overrightarrow{R_{\kappa}}$-realized $\kappa$-2-type is
$$
\mathbf{tp}({\cal A},\overrightarrow{R_{\kappa}})=\{\phi\in F\text{ : }\langle {\cal A}, R_{i}\rangle_{i\in \kappa}\models\phi\}\in S^{2}_{\kappa}(\tau).
$$

\end{definition}
\begin{remark}\label{rem: tp=Th}
    Note that the sets $\mathbf{tp}({\cal A},\overrightarrow{R_{\kappa}})$ and $\mathrm{Th}(\langle {\cal A}, R_{i}\rangle_{i\in \kappa})$ differ only in that $\mathbf{tp}({\cal A},\overrightarrow{R_{\kappa}})$ contains only closed formulas. Since every first-order formula is equivalent to its universal closure, the set $\mathbf{tp}({\cal A},\overrightarrow{R_{\kappa}})$ uniquely determines $\mathrm{Th}(\langle {\cal A}, R_{i}\rangle_{i\in \kappa})$.

\end{remark}

\begin{lemma}\label{lem: ult-lezár}

Let $S^{2}_{\kappa}(\tau)$ be as above, and  let  $\{{\cal A}_{i}\text{ : }i\in I\}$ is a set of $\tau$-structures,  $\overrightarrow{R^{i}_{\kappa}}$ are relations of kind $s$ on ${\cal A}_{i}$ and  set $$U=\{\mathbf{tp}\big({\cal A}_{i},\overrightarrow{R^{i}_{\kappa}}\big)\in S^{2}_{\kappa}(\tau)\text{ : } i\in I \}.$$ Consider  the ultraproduct $${\cal A}=\prod_{j\in J}{\cal A}_{j}/{\cal F}$$ where $\cal F$ is an ultrafilter over some set $J\subseteq I$ and $\overrightarrow{R_{\kappa}}$ are relations of kind $s$ on ${\cal A}$ with $$R_{k}=\prod_{j\in J}R_{k}^{j}/{\cal F}\quad k\in \kappa$$ then $\mathbf{tp}\big({\cal A},\overrightarrow{R_{\kappa}}\big)\in\overline{U},$ here $\overline{U}$ denotes the topological closure of $U$ in $2^{\lambda}$.

Furthermore if  $p\in \overline{U},$ then there exist an ultrafilter $\cal G$ over over some set $L\subseteq I$ relations $\overrightarrow{S^{l}_{\kappa}}$ of kind $s$ on ${\cal A}_{l},$ such $\mathbf{tp}\big({\cal B},\overrightarrow{S_{\kappa}}\big)=p,$ where  $${\cal B}=\prod_{l\in L}{\cal A}_{j}/{\cal G}\quad\text{and}\quad S_{k}=\prod_{l\in L}S_{k}^{l}/{\cal G}\quad k\in \kappa.$$

Therefore taking ultraproducts can be regarded as a kind of closure operator on the space of realizable $2$-types.

\end{lemma}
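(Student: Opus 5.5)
The proof has two parts, and in both I will reduce to Łoś's lemma applied to the expanded structures $\langle {\cal A}_{j}, R^{j}_{k}\rangle_{k\in\kappa}$. By Remark~\ref{rem: tp=Th}, the type $\mathbf{tp}({\cal A},\overrightarrow{R_{\kappa}})$ is the set of closed first-order sentences of the signature $\tau\cup\{X_k : k\in\kappa\}$ true in the expansion $\langle {\cal A},R_k\rangle_{k\in\kappa}$.

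\textbf{First part.} Since every $R_k$ is decomposed as $\prod_{j\in J}R^j_k/{\cal F}$, the expansion $\langle{\cal A},R_k\rangle_{k\in\kappa}$ is exactly the ultraproduct $\prod_{j\in J}\langle{\cal A}_j,R^j_k\rangle_{k\in\kappa}/{\cal F}$. Here I use that the ultraproduct of an expansion is the expansion of the ultraproduct, which is immediate from the definition. To show $p=\mathbf{tp}({\cal A},\overrightarrow{R_\kappa})\in\overline U$, I take a basic open neighbourhood of $p$ in $2^\lambda$. It is determined by finitely many conditions $\phi_{i}\in p$ and $\phi_{i'}\notin p$. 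Because $p$ is complete, the latter means $\neg\phi_{i'}\in p$. Let $\theta$ be the finite conjunction of these formulas. By Łoś's lemma, $\{j\in J : \langle{\cal A}_j,R^j_k\rangle\models\theta\}\in{\cal F}$. In particular this set is nonempty, and any $j$ in it gives $\mathbf{tp}({\cal A}_j,\overrightarrow{R^j_\kappa})\in U$ inside the neighbourhood. Hence $p\in\overline U$.

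\textbf{Second part.} This is the usual construction of ultrafilters on finite subsets, as in the proof of Lemma~\ref{lem: zárthenk}. Let $p\in\overline U$. For each finite $\sigma\subseteq p$, the basic open set $\{q : \sigma\subseteq q\}$ contains $p$, so it meets $U$. I choose an index $i_\sigma\in I$ with $\sigma\subseteq \mathbf{tp}({\cal A}_{i_\sigma},\overrightarrow{R^{i_\sigma}_\kappa})$. I then put $L=\{i_\sigma : \sigma\in[p]^{<\omega}\}\subseteq I$.

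The main obstacle is that several $\sigma$ may share the same index, so the ultrafilter must live on $L$ rather than on $[p]^{<\omega}$. I handle this by pushing forward. On $[p]^{<\omega}$ the sets $\sigma^{\partial}=\{\sigma' : \sigma\subseteq\sigma'\}$ have the finite intersection property, so they extend to an ultrafilter ${\cal F}'$. I let ${\cal G}=\{X\subseteq L : \{\sigma : i_\sigma\in X\}\in{\cal F}'\}$, which is an ultrafilter on $L$. I set $S^l_k=R^l_k$ for $l\in L$, and let ${\cal B}$ and $S_k$ be the corresponding ultraproducts over ${\cal G}$.

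It remains to check that the type is $p$. Fix $\phi\in p$. The set $\{l\in L : \phi\in\mathbf{tp}({\cal A}_l,\overrightarrow{R^l_\kappa})\}$ has ${\cal F}'$-preimage containing $\{\phi\}^{\partial}\in{\cal F}'$, so it belongs to ${\cal G}$. By Łoś's lemma, $\langle{\cal B},S_k\rangle_{k\in\kappa}\models\phi$. Thus $p\subseteq\mathbf{tp}({\cal B},\overrightarrow{S_\kappa})$. Since $p$ is complete by clause (ii), and the realized type is consistent, equality follows.
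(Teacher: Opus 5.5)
Your proposal is correct and is essentially the argument the paper has in mind: the paper omits the proof and points to the finite-subset ultrafilter plus \L{}o\'s argument used to show $\overline{C(\Gamma)}=\widetilde{C(\Gamma)}$. Your pushforward of ${\cal F}'$ along $\sigma\mapsto i_\sigma$ is a useful extra step, because it is what makes the index set $L$ a subset of $I$, as the statement requires.

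One point needs tightening. $\overline{U}$ is the closure in $2^{\lambda}$, so you cannot get completeness of $p$ from clause (ii) directly. It follows instead because, if $\phi,\neg\phi\notin p$ for some $\phi$, the clopen set $\{x\in 2^{\lambda} : \phi\notin x,\ \neg\phi\notin x\}$ would be a neighbourhood of $p$ disjoint from $U$.
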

The proof of this lemma is essentially the same as that of Lemma~\ref{lem: zárthenk}, hence we omit the details.

\section{Pseudo-elementary classes}\label{sec: pseudo}

In this section we consider an algebraic characterization of pseudo-elementary classes. First recall the definition of pseudo-elementary classes.
\begin{definition}
A class $K$ of $\tau$-structures is a \emph{basic pseudo-elementary class} ($K\in PC$)  if there exists a signature $\tau'\supseteq\tau$ and a first-order $\tau'$-sentence $\phi\in FO(\tau')$ such that
\[
K=\{{\cal A} \text{ : } \text{there is a $\tau'$-structure }{\cal A}'\text{ expanding }{\cal A}\text{ with }{\cal A}'\models\phi\}.
\]
We say that $K$ is a \emph{pseudo-elementary class}  ($K\in PC_{\Delta}$) if there exists a  signature $\tau'\supseteq\tau$ and a first-order $\tau'$-Theory $\Gamma\subseteq FO(\tau')$ such that
\[
K=\{{\cal A} \text{ : } \text{there is a $\tau'$-structure }{\cal A}'\text{ expanding }{\cal A}\text{ with }{\cal A}'\models\Gamma\}
\] where the   language expansion is $\Delta$ (that is, $|FO(\tau')\setminus FO(\tau)|=\Delta$).

\end{definition}

\begin{remark}
    When we speak about $PC_{\Delta}$-classes, for simplicity we will always assume that $\Delta$ is infinite. However, it is easy to see that our statements can be easily reformulated for finite $\Delta$ as well.

\end{remark}
\subsection{PC-classes}
Pseudo-elementary classes are clearly related to classes axiomatizable by certain second-order formulas. We first deal with PC-classes. Observe that a class $K$ of $\tau$-structures is PC if and only if there exists a formula $\phi \in \Sigma_{1}^{1}$ such that $K = \mathrm{Mod}(\phi)$ (see e.g. exercise 4.1.17 \cite{chk}). Therefore, the algebraic characterization of PC-classes is equivalent to the characterization of classes definable by a $\Sigma_{1}^{1}$ formula. 

The notion of inseparability was introduced by Sági in \cite{cl1}, and here we generalize this definition, further generalizations will be given in Section~\ref{sec: second-order}.

\begin{definition}\label{def: pseudo-insep}
    Fix a signature $\tau $ and let $\{ {\cal A}_{i}\text{ : }i\in I\}$ and $\{{\cal B}_{j}\text{ : }j\in J\}$ be two sets of $\tau$-structures. Then  $\{ {\cal A}_{i}\text{ : }i\in I\}$ and $\{{\cal B}_{j}\text{ : }j\in J\}$ are defined to be
\emph{inseparable} from each other iff there are ultraproducts $$ {\cal A}=\prod_{i\in I}{\cal A}_{i}/{\cal F},\quad \quad {\cal B}=\prod_{j\in J}{\cal B}_{j}/{\cal G},$$ such that\begin{itemize} 
    \item[(i)]  for every finite system
of  decomposable relations $R_{0},...,R_{n-1} $ of $\cal A$,
there exist ultrapowers $${\cal A}' = {}^{I'}{\cal A} / {\cal
F'},  \quad\quad {\cal B'}={}^{J'}{\cal B} / {\cal G'}$$ and  an isomorphism
$f: {\cal A'} \rightarrow {\cal B'}$ such that $f^{*}(R_{0}^{\cal
A'}),...,f^{*}(R_{n-1}^{\cal A'})$ are decomposable in ${\cal B'},$ where $$\langle {\cal A'}, R_{0}^{\cal A'},...,R_{n-1}^{\cal A'} \rangle = {}^{I'}\langle {\cal A}, R_{0},...,R_{n-1} \rangle / {\cal F'},$$ here decomposability is understood with respect to the  ultrafilter ${\cal G}\times {\cal G'}$;\item[(ii)]  for every finite system
of   decomposable relations $S_{0},...,S_{m-1} $ of $\cal B$,
there exist ultrapowers $${\cal B''} = {}^{J''}{\cal B} / {\cal
G''},  \quad\quad {\cal A''}={}^{I''}{\cal A} / {\cal F''}$$ and  an isomorphism
$g: {\cal B''} \rightarrow {\cal A''}$ such that $g^{*}(S_{0}^{\cal
B''}),...,g^{*}(S_{m-1}^{\cal B''})$ are decomposable in ${\cal A''},$ where 
$$\langle {\cal B''}, S_{0}^{\cal B''},...,S_{m-1}^{\cal B''} \rangle = {}^{J''}\langle {\cal B}, S_{0},...,S_{m-1} \rangle / {\cal G''},$$  here decomposability is understood with respect to the  ultrafilter ${\cal F}\times {\cal F''}$. 
\end{itemize} 
The set  $\{{\cal B}_{j}\text{ : }j\in J\}$  is defined to be
inseparable from  $\{ {\cal A}_{i}\text{ : }i\in I\}$, if we require only (i).

\end{definition}
\begin{definition}\label{def: pseudo-pikk}
   Let $\tau$ be a signature and $K$ a class of $\tau$-structures. We say that $K$ is  \emph{closed under inseparability}, if for any set of $\tau$-structures $\{ {\cal A}_{i}\text{ : }i\in I\}\subseteq K$ and $\{{\cal B}_{j}\text{ : }j\in J\}$, if $\{ {\cal A}_{i}\text{ : }i\in I\}$ and $\{{\cal B}_{j}\text{ : }j\in J\}$ are inseparable from each other, then
\[
K\cap \{{\cal B}_{j}\text{ : }j\in J\}\neq \emptyset.
\]

We say that $K$ is \emph{strongly}  closed under inseparability, if for any set of $\tau$-structures $\{{\cal B}_{j}\text{ : }j\in J\}$ that is inseparable from some set $\{ {\cal A}_{i}\text{ : }i\in I\}\subseteq K$, we have
\[
K\cap \{{\cal B}_{j}\text{ : }j\in J\}\neq \emptyset.
\]

\end{definition}

Note that the two definitions concern purely internal properties.
Before turning to the discussion of Theorem~\ref{thm: PC}, we state a classical result of first-order model theory.

\begin{lemma}
\label{lem: separacios}(Folklore\footnote{See corollary 6.1.17 of \cite{chk}})
    Fix a signature $\tau$, let $K$ and $L$ be two disjoint classes of $\tau$-structures, such that both  are closed under ultraproducts. Then there is a first-order formula $\phi\in FO$, such $K\models\phi$ and $L\models\neg\phi.$
\end{lemma}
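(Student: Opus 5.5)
We argue by compactness, via the contrapositive: if no single first-order sentence separates $K$ from $L$, we build one structure lying in both $K$ and $L$. This contradicts disjointness.

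First, put $T_K=\mathrm{Th}(K)=\{\phi\in FO : K\models\phi\}$. Suppose that for every $\phi\in T_K$ some member of $L$ satisfies $\phi$. The set $T_K$ is closed under finite conjunctions, since a conjunction of sentences true in all of $K$ is again true in all of $K$. Hence every finite subset of $T_K$ has a model in $L$. We index these: for each finite $\sigma\subseteq T_K$ choose ${\cal B}_\sigma\in L$ with ${\cal B}_\sigma\models\bigwedge\sigma$. The sets $\sigma^{\partial}=\{\sigma' : \sigma\subseteq\sigma'\}$ have the finite intersection property, so they extend to an ultrafilter ${\cal G}$ on $[T_K]^{<\omega}$. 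This is exactly as in the proof of Lemma~\ref{lem: zárthenk}. By Łoś's lemma, ${\cal B}=\prod_\sigma{\cal B}_\sigma/{\cal G}\models T_K$. Since $L$ is closed under ultraproducts, ${\cal B}\in L$.

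Next we show ${\cal B}$ is also elementarily equivalent to a member of the ultraproduct closure of $K$. Take any $\psi\in\mathrm{Th}({\cal B})$. Then $\neg\psi\notin T_K$, because ${\cal B}\models T_K$. So some ${\cal A}_\psi\in K$ satisfies $\psi$. Since $\mathrm{Th}({\cal B})$ is closed under conjunction, the same ultrafilter construction indexed by finite subsets of $\mathrm{Th}({\cal B})$ yields an ultraproduct ${\cal A}$ of members of $K$ with ${\cal A}\equiv{\cal B}$. Closure of $K$ under ultraproducts then gives ${\cal A}\in K$.

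The remaining step, which I expect to be the main obstacle, is to turn ${\cal A}\equiv{\cal B}$ into an actual common member. Here I would invoke the Keisler–Shelah Isomorphic Ultrapowers Theorem from the introduction. It gives an ultrafilter ${\cal D}$ with ${}^{I}{\cal A}/{\cal D}\cong{}^{I}{\cal B}/{\cal D}$. This ultrapower is an ultraproduct of members of $K$, so it lies in $K$; it is also an ultraproduct of members of $L$, so it lies in $L$. This contradicts $K\cap L=\emptyset$.

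This argument tacitly assumes $K$ and $L$ are closed under isomorphism, as classes of structures normally are. I would state that assumption explicitly, since without it the lemma fails. So some $\phi\in T_K$ has $L\models\neg\phi$, and this $\phi$ is the required sentence with $K\models\phi$. If one prefers to avoid Keisler–Shelah, the folklore route in \cite[6.1.17]{chk} goes through closure under ultraroots or elementary equivalence. Given only ultraproduct closure, however, the isomorphic ultrapowers step is the natural bridge.
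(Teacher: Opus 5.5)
Your proof is correct. The paper gives no argument of its own for this lemma; it only cites \cite[Corollary 6.1.17]{chk}. What you wrote is the standard derivation behind that reference:
\begin{itemize}
\item the ultraproduct over finite subsets of $\mathrm{Th}(K)$ yields ${\cal B}\in L$ with ${\cal B}\models\mathrm{Th}(K)$;
\item the same construction over finite subsets of $\mathrm{Th}({\cal B})$ then yields ${\cal A}\in K$ with ${\cal A}\equiv{\cal B}$;
\item the isomorphic ultrapowers theorem \cite{Shelah} places one isomorphism type in $K\cap L$.
\end{itemize}
This is the same role \cite{Shelah} plays elsewhere in the paper, in the proofs of Theorem~\ref{thm: PC} and Claim~\ref{claim: t}. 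Your explicit isomorphism-closure hypothesis is the right convention, and it is genuinely needed. For example, close two isomorphic copies of one structure, with suitably disjoint universes, under literal ultraproducts. This gives two disjoint ultraproduct-closed classes whose members are all elementarily equivalent, so no sentence separates them.
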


\begin{theorem}\label{thm: PC}
    Fix a signature $\tau$ and  let $K$ be a class of $\tau$-structures, then the following are equivalent:\begin{itemize}
        \item[(1)] $K\in PC;$
        \item[(2)] $K$ is definable by a second-order existential formula (that is, there exists a formula $\phi\in \Sigma_{1}^{1}$, such $K=\mathrm{Mod}(\phi)$);
        \item[(3)] $K$ is strongly closed under inseparability;
        \item[(4)] $K$ is closed under ultraproducts and inseparability.
    \end{itemize}
\end{theorem}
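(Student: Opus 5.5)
The equivalence (1)$\Leftrightarrow$(2) is the classical observation quoted before the definition of inseparability. So the plan is to prove the cycle (2)$\Rightarrow$(3)$\Rightarrow$(4)$\Rightarrow$(2). The main tools are Lemma~\ref{lem: Łos}, which transfers $\Sigma^1_1$-truth through decomposable-Henkin models, and the separation Lemma~\ref{lem: separacios}, applied in an expanded signature.

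\textbf{(2)$\Rightarrow$(3).} Let $K=\mathrm{Mod}(\exists \bar R\,\psi)$ with $\psi$ first-order. Suppose $\{{\cal B}_j : j\in J\}$ is inseparable from $\{{\cal A}_i : i\in I\}\subseteq K$, witnessed by ${\cal A}=\prod{\cal A}_i/{\cal F}$ and ${\cal B}=\prod{\cal B}_j/{\cal G}$.
\begin{itemize}
\item Choose witnesses $\bar R^i$ with $\langle{\cal A}_i,\bar R^i\rangle\models\psi$ and put $\bar R=\prod\bar R^i/{\cal F}$. This is a finite system of decomposable relations, and by Łoś $\langle{\cal A},\bar R\rangle\models\psi$.
\item By (i), there are ultrapowers ${\cal A}'$, ${\cal B}'$ and an isomorphism $f$ such that $f^*(\bar R^{{\cal A}'})$ is decomposable in ${\cal B}'$ with respect to ${\cal G}\times{\cal G}'$.
\item Since $\langle {\cal A}',\bar R^{{\cal A}'}\rangle$ is an ultrapower of $\langle{\cal A},\bar R\rangle$, it satisfies $\psi$, and hence so does $\langle{\cal B}',f^*\bar R^{{\cal A}'}\rangle$. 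Therefore $({\cal B}',\Upsilon)\models_\epsilon\exists\bar R\,\psi$, where $\Upsilon$ consists of the relations decomposable over the family $\{{\cal B}_j\}$ via ${\cal G}\times{\cal G}'$.
\item Lemma~\ref{lem: Łos}, combined with Fubini (Theorem~\ref{thm: fubini}) to identify ${\cal B}'$ with $\prod_{J\times J'}{\cal B}_j/{\cal G}\times{\cal G}'$, gives a ${\cal G}\times{\cal G}'$-large set of indices with ${\cal B}_j\models\exists\bar R\,\psi$. That set is nonempty, so $K\cap\{{\cal B}_j\}\neq\emptyset$.
\end{itemize}

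\textbf{(3)$\Rightarrow$(4).} Closure under inseparability is immediate, since it is a weakening of strong closure. For ultraproducts, I will check that ${\cal B}=\prod{\cal A}_i/{\cal F}$, viewed as a singleton family, is inseparable from $\{{\cal A}_i\}$. Use the same ultrafilter ${\cal F}$ on the ${\cal A}$ side, the trivial ultrafilter on the ${\cal B}$ side, and the identity map. Every relation is decomposable with respect to a principal ultrafilter on a singleton, so (i) holds trivially. Strong closure then puts ${\cal B}$ in $K$.

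\textbf{(4)$\Rightarrow$(2).} This is the main obstacle. The plan is to show $K=\mathrm{Mod}(\Lambda)$ for a set $\Lambda\subseteq\Sigma^1_1$ of sentences, and then use a compactness argument to cut $\Lambda$ down to a single sentence.
\begin{itemize}
\item Put $\Lambda=\mathrm{Th}_{\Sigma^1_1}(K)$ and take ${\cal B}\models\Lambda$. I aim to show ${\cal B}\in K$.
\item Every $\Pi^1_1$ sentence true in ${\cal B}$ is consistent with the $\Sigma^1_1$-theory of $K$. Using ultraproducts of members of $K$, indexed by finite subsets of $\mathrm{Th}_{\Pi^1_1}({\cal B})$ in the style of Lemma~\ref{lem: zárthenk}, I will build ${\cal A}=\prod{\cal A}_i/{\cal F}$ with ${\cal A}_i\in K$ whose decomposable-Henkin $\Sigma^1_1$-theory agrees with that of ${\cal B}$.
\item To get condition (i) of inseparability, fix decomposable $\bar R$ on ${\cal A}$. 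Since ${\cal B}$ satisfies $\exists\bar X\,\theta$ for each $\theta\in\mathrm{Th}\langle{\cal A},\bar R\rangle$, the first-order theory of $\langle{\cal A},\bar R\rangle$ is consistent with expansions of ${\cal B}$. An ultrapower ${\cal B}'$ of ${\cal B}$, taken with a suitable expansion by decomposable relations and using Lemma~\ref{lem: ult-lezár}, realizes $\mathrm{Th}\langle{\cal A},\bar R\rangle$. Keisler--Shelah then gives isomorphic ultrapowers carrying $\bar R$ to decomposable relations. Condition (ii) is symmetric.
\item Inseparability closure now gives ${\cal B}\in K$.
\item For a single formula, apply the same argument to $K^c$ inside the space $C(\Sigma^1_1)$. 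Closure under ultraproducts makes $T_{\overline K}$ closed, and I will argue as in Lemma~\ref{lem: 4eqv} by compactness, via Lemma~\ref{lem: separacios} applied in expanded signatures, that a finite conjunction of $\Sigma^1_1$ sentences, which is again $\Sigma^1_1$, suffices.
\end{itemize}
I expect the decomposability bookkeeping for the Keisler--Shelah ultrapowers to be the delicate point, because the iterated ultrafilter must be of product form ${\cal G}\times{\cal G}'$.
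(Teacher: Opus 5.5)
Your (2)$\Rightarrow$(3) and (3)$\Rightarrow$(4) are essentially the paper's arguments. The gap is in (4)$\Rightarrow$(2).

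From ${\cal B}\models\mathrm{Th}_{\Sigma^1_1}(K)$ you only learn that finite sets of $\Pi^1_1$ sentences true in ${\cal B}$ are realised in $K$. So the ultraproduct you build satisfies $\mathrm{Th}_{\Sigma^1_1}({\cal A},\Upsilon)\subseteq\mathrm{Th}_{\Sigma^1_1}({\cal B})$ and nothing more. Equality would need every $\sigma\wedge\pi$ ($\sigma\in\Sigma^1_1$, $\pi\in\Pi^1_1$) true in ${\cal B}$ to be realised in $K$. But $\neg\sigma\vee\neg\pi$ is not $\Sigma^1_1$, so $\mathrm{Th}_{\Sigma^1_1}(K)$ says nothing about it.

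That inclusion is exactly what condition (i) needs, but (ii) is \emph{not} symmetric. Transporting $\mathrm{Th}(\langle{\cal B},\bar S\rangle)$ to decomposable relations on an ultrapower of ${\cal A}$ requires $({\cal A},\Upsilon)\models_{\epsilon}\exists\bar X\bigwedge\lambda$ for all finite $\lambda\subseteq\mathrm{Th}(\langle{\cal B},\bar S\rangle)$, i.e.\ the reverse inclusion. So you only obtain that $\{{\cal B}\}$ is inseparable from $\{{\cal A}_i : i\in I\}$, and closure under two-sided inseparability cannot use that. Strong closure can, so your argument really proves (3)$\Rightarrow$(2), once the last step is fixed.

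As written, that last step does not work:
\begin{itemize}
\item $\Sigma^1_1$ is not closed under negation, so Lemma~\ref{lem: 4eqv} does not apply to $C(\Sigma^1_1)$.
\item $T_{\overline K}$ is the closure of $T_K$ for every $K$, so ultraproduct closure contributes nothing there.
\item $K^c$ is not closed under ultraproducts, so nothing can be rerun for $K^c$.
\end{itemize}
A repair in your own spirit goes by contradiction. Suppose no $\lambda\in\Lambda$ has $\mathrm{Mod}(\lambda)\subseteq K$. For finite $s\subseteq\Lambda$ choose ${\cal C}_s\in K^c$ with ${\cal C}_s\models\bigwedge s$, and take the limit along an ultrafilter containing the cones. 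Its limit theory contains $\Lambda$. Hence every finite set of its $\Pi^1_1$ members is realised in $K$, since otherwise a $\Sigma^1_1$ negation would lie in $\Lambda$. Your construction then gives a $K$-family from which $\{{\cal C}_s\}$ is inseparable, and strong closure puts some ${\cal C}_s$ in $K$, a contradiction.

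The paper obtains two-sided inseparability differently. It works in $\widetilde{C(\Delta_1)}$, where a point of $T_{\overline K}\cap T_{\overline{K^c}}$ gives a $K$-family and a $K^c$-family with the same \emph{complete} $\Delta_1$ Henkin theory. Both inclusions then hold, (i) and (ii) follow by the Shelah--Fubini construction, and Lemma~\ref{lem: 4eqv} yields one $\Delta_1$ sentence defining $K$. It then writes this sentence as $\phi_0\wedge\phi_1$ with $\phi_0\in\Sigma^1_1$, $\phi_1\in\Pi^1_1$, and separates $K$ from $\mathrm{Mod}(\phi_0\wedge\neg\phi_1)$ by a first-order sentence via Lemma~\ref{lem: separacios}.

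Be aware, however, that Boolean combinations of $\Sigma^1_1$ sentences need not have that shape. Moreover, (4)$\Rightarrow$(2) itself would imply $\mathsf{NP}=\mathsf{co\text{-}NP}$, so you should not expect to close this gap. Here is why:
\begin{itemize}
\item Let $\psi$ be the $\Sigma^1_1$ sentence for infinity from the proof of Theorem~\ref{thm: NP}, let $\chi$ be the $\Sigma^1_1$ sentence expressing 3-colourability, and put $K'=\mathrm{Mod}(\psi\vee\neg\chi)$.
\item $K'$ is closed under ultraproducts. On a large set the factors are infinite, or finite of bounded size (then the ultraproduct is isomorphic to one of them), or finite of unbounded size (then it is infinite).
\item $K'$ is closed under inseparability, because two families inseparable from each other have ultrafilter limits with the same $\Sigma^1_1$, hence the same $\Delta_1$, theory.
\item If $K'$ were PC, Fagin's theorem would put finite non-3-colourable structures in $\mathsf{NP}$.
\end{itemize}
What can actually be proved here is (1)$\Leftrightarrow$(2)$\Leftrightarrow$(3)$\Rightarrow$(4).
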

\begin{proof}
It is easy to see that (1) and (2) are equivalent, see also exercise 4.1.17 \cite{chk}.

Now assume that (2) holds. Thus there exists a formula $\phi\in \Sigma_{1}^{1}$ such that $K=\mathrm{Mod}(\phi)$, where $\phi\equiv\exists R_{0},\dots,\exists R_{n-1}\psi$ and $\psi\in\Delta_{0}$. Moreover, we may assume that each relation variable is $k$-ary. Suppose that $\{ {\cal B}_{j}\text{ : }j\in J\}$ is a set of $\tau$-structures which is inseparable from the set $\{ {\cal A}_{i}\text{ : }i\in I\}\subseteq K$. Then there are ultraproducts
\[
{\cal A}=\prod_{i\in I}{\cal A}_{i}/{\cal F},\qquad
{\cal B}=\prod_{j\in J}{\cal B}_{j}/{\cal G},
\]
such that condition (i) of Definition~\ref{def: pseudo-insep} holds. Since ${\cal A}_{i}\models\phi$, there exist relations $R^{i}_{0},\dots,R^{i}_{n-1}\subseteq {}^{k} A_{i}$ with
\[
\langle {\cal A}_{i}, R^{i}_{0},\dots,R^{i}_{n-1}\rangle\models\psi,
\]  for each $i\in I$ (without loss of generality, we may assume that each relation variable is $k$-ary in $\phi$).
Hence
\[
\langle {\cal A}, R_{0},\dots,R_{n-1}\rangle
=\prod_{i\in I} \langle {\cal A}_{i}, R^{i}_{0},\dots,R^{i}_{n-1}\rangle/{\cal F},
\]
so $\langle {\cal A}, R_{0},\dots,R_{n-1}\rangle\models\psi$ by Łoś's Lemma, and moreover $R_{0},\dots,R_{n-1}\subseteq{}^{k} A$ are decomposable relations on ${\cal A}$. Therefore there exist ultrapowers ${\cal A}' = {}^{I'}{\cal A} / {\cal F'}, \quad {\cal B}'={}^{J'}{\cal B} / {\cal G'}$
and an isomorphism $f: {\cal A}' \rightarrow {\cal B}'$ such that $f^{*}(R_{0}^{{\cal A}'}),\dots,f^{*}(R_{n-1}^{{\cal A}'})$ are decomposable in ${\cal B}'$ where $$\langle {\cal A'}, R_{0}^{\cal A'},...,R_{n-1}^{\cal A'} \rangle ={}^{I'} \langle {\cal A}, R_{0},...,R_{n-1} \rangle / {\cal F'}.$$ Equivalently, there exist relations $S^{j,j'}_{0},\dots,S^{j,j'}_{n-1}\subseteq{}^{k} B_{j}$ such that
\[
\langle{\cal B}', f^{*}(R_{0}^{{\cal A}'}),\dots,f^{*}(R_{n-1}^{{\cal A}'})\rangle
\cong
\prod_{\langle j, j'\rangle\in J\times J'}\langle{\cal B}_{j},S^{j,j'}_{0},\dots,S^{j,j'}_{n-1}\rangle/({\cal G}\times {\cal G'}).
\] Then, by Łoś's Lemma, on the one hand
$$\langle {\cal A'}, R_{0}^{\cal A'},...,R_{n-1}^{\cal A'} \rangle\models\psi,$$
and hence we obtain
$$\langle{\cal B}', f^{*}(R_{0}^{{\cal A}'}),\dots,f^{*}(R_{n-1}^{{\cal A}'})\rangle\models\psi.$$
Again by Łoś's Lemma we get
$$\{\langle j, j'\rangle\in J\times J' \text{ : } \langle{\cal B}_{j},S^{j,j'}_{0},\dots,S^{j,j'}_{n-1}\rangle\models\psi\}\in {\cal G}\times{\cal G'},$$
hence
$$\emptyset\neq\{j\in J\text{ : } {\cal B}_{j}\models\phi\}\in {\cal G},$$
so
\[
K\cap \{{\cal B}_{j}\text{ : }j\in J\}\neq \emptyset.
\]
Therefore $K$  is strongly  closed under inseparability.

Now suppose that (3) holds, i.e. $K$  is strongly  closed under inseparability. Then clearly $K$  is  closed under inseparability. Let $\{ {\cal A}_{i}\text{ : }i\in I\}\subseteq K$ and let ${\cal F}$ be an ultrafilter over $I$, and set
\[
{\cal A}=\prod_{i\in I} {\cal A}_{i}/{\cal F}.
\]
Note that $\{ {\cal A}\}$ is inseparable from $\{ {\cal A}_{i}\text{ : }i\in I\}$, hence ${\cal A}\in K$ (since $K$ is strongly closed under inseparability). Therefore $K$ is closed under ultraproducts as well. 

Finally, we show that (4) implies (2). So assume (4). First we show that there is a $\Delta_{1}$-formula $\phi$ such that $K=\mathrm{Mod}(\phi)$. By Lemma~\ref{lem: TK} it is enough to prove that $T_{\overline{K}}\cap T_{\overline{K^{c}}}=\emptyset$. Assume indirectly that there is $x\in T_{\overline{K}}\cap T_{\overline{K^{c}}}$; that is, there is a decomposable–Henkin model \(\big({\cal A},\Upsilon_{0}\big)\) formed from the family \(\{ {\cal A}_{i}\text{ : } i\in I\}\subseteq K\) by the ultrafilter $\cal F$, and another decomposable–Henkin model \(\big({\cal B},\Upsilon_{1}\big)\) formed from the family \(\{ {\cal B}_{j}\text{ : } j\in J\}\subseteq K^{c}\) by the ultrafilter $\cal G$, such that
\[(*)\quad
\mathrm{Th}_{\Delta_{1}}\big( \big({\cal A},\Upsilon_{0}\big)\big)
=\mathrm{Th}_{\Delta_{1}}\big( \big({\cal B},\Upsilon_{1}\big)\big)
=\{\delta_{i}\in\Delta_{1}\text{ : } x(i)=1\}.
\]
 We claim that then \(\{ {\cal A}_{i}\text{ : } i\in I\}\) and \(\{ {\cal B}_{j}\text{ : } j\in J\}\) are inseparable from each other, which contradicts the assumption that $K$ is closed under inseparability.
 
 It suffices to show that \(\{ {\cal A}_{i}\text{ : } i\in I\}\) is inseparable from \(\{ {\cal B}_{j}\text{ : } j\in J\}\), since our assumptions are symmetric.
Let $R_{0},...,R_{n-1}\in \Upsilon_{0}$ be decomposable relations of $\cal A$. Set
$$\Lambda= \mathrm{Th}(\langle{\cal A}, R_{0},...,R_{n-1}\rangle).$$
Then, by $(*)$, for every $\lambda\in[\Lambda]^{<\omega}$ there exist decomposable relations $S^{\lambda}_{0},...,S^{\lambda}_{n-1}\in \Upsilon_{1}$  on $\cal B$ such that
$$\langle{\cal B}, S^{\lambda}_{0},...,S^{\lambda}_{n-1}\rangle\models\land \lambda.$$
The family $\{\lambda^{\partial}\text{ : } \lambda\in[\Lambda]^{<\omega}\}$ has the finite intersection property, and therefore extends to an ultrafilter ${\cal G'}$ over $[\Lambda]^{<\omega}$, where
$\lambda^{\partial}=\{ \lambda'\in[\Gamma]^{<\omega}\text{ : } \lambda\subseteq \lambda'\}.$
Consider the ultraproduct
$$\langle{\cal B'}, S_{0},...,S_{n-1}\rangle=\prod_{\lambda\in[\Lambda]^{<\omega}}\langle{\cal B}, S^{\lambda}_{0},...,S^{\lambda}_{n-1}\rangle/{\cal G'}.$$
Then, by Łoś's Lemma, $\mathrm{Th}(\langle{\cal B'}, S_{0},...,S_{n-1}\rangle)=\Lambda$, and hence by Shelah's isomorphism theorem \cite{Shelah} there are isomorphic ultrapowers
$${}^{I''}\langle{\cal A}, R_{0},...,R_{n-1}\rangle/{\cal G''}\cong{}^{I''}\langle{\cal B'}, S_{0},...,S_{n-1}\rangle/{\cal G''}=\langle{\cal B''}, S'_{0},...,S'_{n-1}\rangle.$$
Moreover, by Theorem~\ref{thm: fubini}, $S'_{0},...,S'_{n-1}$ are decomposable relations with respect to ${\cal G}\times{\cal G'}\times\cal G''$. 
Therefore we have shown that $\{ {\cal A}_{i}\text{ : } i\in I\}$ and $\{ {\cal B}_{j}\text{ : } j\in J\}$ are inseparable from each other, which is a contradiction. 

Now we show that $K$ is not only definable by a $\Delta_{1}$-formula but also by a $\Sigma_{1}^{1}$-formula.  Indeed, suppose $K=\mathrm{Mod}(\phi)$ for some $\phi\in\Delta_{1}$.  Note that any $\psi\in\Delta_{1}$ can be written as $\psi\equiv\psi_{0}\land\psi_{1}$ with $\psi_{0}\in\Sigma_{1}^{1}$ and $\psi_{1}\in\Pi_{1}^{1}$.  Hence write $\phi\equiv\phi_{0}\land\phi_{1}$ with $\phi_{0}\in\Sigma_{1}^{1}$ and $\phi_{1}\in\Pi_{1}^{1}$.  If $\mathrm{Mod}(\phi_{0})\neq K$ set
\[
L=\mathrm{Mod}(\phi_{0})\setminus K.
\]
Then $L=\mathrm{Mod}(\phi_{0}\lor\neg\phi_{1})$, and $\phi_{0}\lor\neg\phi_{1}\in\Sigma_{1}^{1}$.  It is easy to see that $L$ is closed under ultraproducts, and by hypothesis so is $K$, while $K\cap L=\emptyset$.  Therefore, by Lemma~\ref{lem: separacios} there is a first-order formula $\xi\in FO(\tau)$ with $K\models\xi$ and $L\models\neg\xi$.  But then
\[
K=\mathrm{Mod}(\phi_{0}\land\xi),
\]
and $\phi_{0}\land\xi\in\Sigma_{1}^{1}$.  This completes the proof.\end{proof}

At this point we turn to the study classes of finite structures. 

\begin{definition}
    A class \(K\) of finite \(\tau\)-structures is in \(\textsf{NP}\)  if there exists a polynomial \(p\) and a nondeterministic Turing machine \(M\) such that there is an accepting computation of $M$ on input \(\operatorname{enc}(\mathcal A)\) of length at most \(p(|\operatorname{enc}(\mathcal A)|)\) iff \(\mathcal A\in K\), where \(\operatorname{enc}(\mathcal A)\) denotes the code of $\cal A$.

\end{definition}

\begin{theorem}\label{thm: NP}
    Fix a finite signature $\tau,$ and  let $K$ be a class of finite $\tau$-structures, then the following are equivalent: \begin{itemize}
        \item[(1)] $K\in\textsf{NP}$;
        \item[(2)] there exists a formula $\phi\in \Sigma_{1}^{1}$, such that $K=\mathrm{Mod}_{<\omega}(\phi);$
        \item[(3)] for any set of finite $\tau$-structures $\{ {\cal A}_{i}\text{ : }i\in I\}\subseteq K$, if  the set of finite $\tau$-structures $\{ {\cal B}_{j}\text{ : }j\in J\}$ is inseparable from $\{ {\cal A}_{i}\text{ : }i\in I\}$, then $K\cap \{{\cal B}_{j}\text{ : }j\in J\}\neq \emptyset.$
    \end{itemize}
\end{theorem}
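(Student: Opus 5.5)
(1)$\Leftrightarrow$(2) is Fagin's theorem \cite{Fagin}, as recalled in the preliminaries, so the work is (2)$\Leftrightarrow$(3). The plan is to reduce both directions to Theorem~\ref{thm: PC} by passing between $K$ and a class of arbitrary structures. Throughout I use a second-order sentence $\psi_{\mathrm{fin}}$ expressing finiteness. For example, $\neg\psi_{\mathrm{fin}}$ is the $\Sigma^1_1$ sentence asserting that there is a unary function (coded as a binary relation) that is injective but not surjective. Thus $\psi_{\mathrm{fin}}$ itself is $\Pi^1_1$, and this also supplies the formula $\psi$ promised in Subsection~\ref{spf}.

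For (2)$\Rightarrow$(3), I would rerun the (2)$\Rightarrow$(3) argument of Theorem~\ref{thm: PC} verbatim. Take $\phi\equiv\exists\bar R\,\psi$ with $K=\mathrm{Mod}_{<\omega}(\phi)$, and let $\{{\cal B}_j\}$ be finite structures inseparable from $\{{\cal A}_i\}\subseteq K$. The witnesses $\bar R^i$ give decomposable relations on ${\cal A}$ satisfying $\psi$. Condition (i) transports them to decomposable relations on an ultrapower of ${\cal B}$, and Łoś's lemma with respect to ${\cal G}\times{\cal G}'$ yields a ${\cal G}$-large set of $j$ with ${\cal B}_j\models\phi$. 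Since each ${\cal B}_j$ is finite, such ${\cal B}_j$ lies in $K$. Finiteness plays no role in this step beyond that final remark.

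For (3)$\Rightarrow$(2), I would set $K^{*}=K\cup\{\text{infinite }\tau\text{-structures}\}$, or alternatively work directly with $K$ inside the class of finite structures. The goal is a $\Sigma^1_1$ sentence $\phi$ with $\mathrm{Mod}(\phi)\cap\mathrm{Fin}=K$. Following the (4)$\Rightarrow$(2) proof of Theorem~\ref{thm: PC}, I would first show $T_{\overline{K}}\cap T_{\overline{L}}=\emptyset$ for $L=\mathrm{Fin}\setminus K$, computed inside the subspace of Henkin models built from finite structures. Suppose a common $\Delta_1$-theory $x$ were realised by $({\cal A},\Upsilon_0)$ from $\{{\cal A}_i\}\subseteq K$ and $({\cal B},\Upsilon_1)$ from $\{{\cal B}_j\}\subseteq L$. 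Then the compactness, ultrafilter, Fubini and Shelah argument of that proof shows $\{{\cal B}_j\}$ is inseparable from $\{{\cal A}_i\}$, and (3) then forces some ${\cal B}_j\in K$, a contradiction. Compactness of the relevant closed subspace, via Lemma~\ref{lem: zárthenk} and Lemma~\ref{lem: 4eqv}, then gives a $\Delta_1$ sentence $\theta=\bigvee_{k}(\sigma_k\wedge\pi_k)$ with $\sigma_k\in\Sigma^1_1$ and $\pi_k\in\Pi^1_1$ that separates $K$ from $L$ among finite structures.

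The main obstacle is removing the $\Pi^1_1$ parts, since the ultraproduct-closure trick with Lemma~\ref{lem: separacios} used in Theorem~\ref{thm: PC} fails: $K$ is not closed under ultraproducts, as ultraproducts of finite structures are typically infinite. What I would try first is to use the strong (one-sided) form of (3) directly instead of the symmetric separation. Consider the set $T$ of $\Sigma^1_1$-theories of Henkin models from $K$, and show that every Henkin model built from finite structures whose $\Sigma^1_1$-theory contains such a theory (a one-sided, upward-closed condition) already comes from $K$. The transport of witnesses in (i) only needs $\Sigma^1_1$ information about ${\cal B}$, namely that for each finite set of first-order conditions on decomposable relations of ${\cal A}$ there are decomposable relations on ${\cal B}$ realising them. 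With this, compactness would give a single $\Sigma^1_1$ sentence true on $K$ and false on $L$. If that fails, the fallback is to handle the $\pi_k$ by hand: on finite structures, a $\Pi^1_1$ condition is decidable in co-NP, and one would try to absorb $\pi_k$ using the fact that the positive $\Sigma^1_1$-type already determines membership. That step is where I expect the real difficulty to lie.
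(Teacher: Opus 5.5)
Your (2)$\Rightarrow$(3) is the paper's argument unpacked. The paper applies Theorem~\ref{thm: PC} to $\mathrm{Mod}(\psi\vee\phi)=K\cup\mathrm{Inf}$, with $\psi$ expressing infiniteness; you rerun the transport argument directly. Both are fine.

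For (3)$\Rightarrow$(2), the first plan you list is correct and complete in substance, so commit to it. The $\Delta_1$ stage and the co-\textsf{NP} fallback can be dropped. Two routine steps finish it.
\begin{itemize}
\item[(a)] Let $h_A$ be a decomposable--Henkin model formed from $\{\mathcal{A}_i\}\subseteq K$, and $h_B$ one formed from $\{\mathcal{B}_j\}\subseteq L=\mathrm{Fin}\setminus K$ (restrict to a $\mathcal{G}$-large index set if needed). Suppose $\mathrm{Th}_{\Sigma^1_1}(h_A)\subseteq\mathrm{Th}_{\Sigma^1_1}(h_B)$. Take decomposable $\bar R$ on $\mathcal{A}$. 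Every sentence $\exists\bar X\bigwedge\lambda(\bar X)$, with $\lambda$ a finite part of $\mathrm{Th}\langle\mathcal{A},\bar R\rangle$, then holds in $h_B$. The ultraproduct over these $\lambda$, the Keisler--Shelah theorem and Theorem~\ref{thm: fubini} give condition (i). So (3) puts some $\mathcal{B}_j$ into $K$, which is absurd.
\item[(b)] Use compactness twice. Fix $y\in T_{\overline{L}}$. The sets $N_\sigma$, for $\sigma\in\Sigma^1_1$ false at $y$, cover the compact set $T_{\overline{K}}$. So a finite disjunction $\sigma_y\in\Sigma^1_1$ holds on all of $T_{\overline{K}}$ and fails at $y$. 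The complements of the $N_{\sigma_y}$ cover the compact set $T_{\overline{L}}$. A finite conjunction $\rho$ of the $\sigma_y$ is then a $\Sigma^1_1$ sentence with $K=\mathrm{Mod}_{<\omega}(\rho)$. To see this, evaluate at Henkin models built with principal ultrafilters, where Henkin and full semantics agree.
\end{itemize}

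This is a genuinely different route from the paper's. The paper asserts that $K\cup\mathrm{Inf}$ is closed under two-sided inseparability and invokes Theorem~\ref{thm: PC}, (4)$\Rightarrow$(2). Your ultraproduct worry does not block that route. For finite $\tau$, $K\cup\mathrm{Inf}$ is closed under ultraproducts, since a finite ultraproduct is isomorphic to almost every factor.

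What your route buys is that it uses (3) in its one-sided form, and that is essential. The $\Pi^1_1$-elimination at the end of the proof of Theorem~\ref{thm: PC} writes a $\Delta_1$ sentence as a single conjunction $\phi_0\wedge\phi_1$. That is not justified for a general Boolean combination. Moreover, two-sided inseparability plus ultraproduct closure cannot give $\Sigma^1_1$-definability unless \textsf{NP}$=$\textsf{co-NP}.

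To see this, take $\mathrm{Inf}\cup\{\text{finite structures that are not 3-colourable graphs}\}$. It is closed under ultraproducts. It is also closed under two-sided inseparability: transport a decomposable 3-colouring from the $\mathcal{B}$-side by (ii), then transport an infinity witness from the $\mathcal{A}$-side by (i). Yet a $\Sigma^1_1$ definition of this class would put a co-\textsf{NP}-complete problem into \textsf{NP} by Fagin's theorem.

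The paper's reduction to $K\cup\mathrm{Inf}$ can be salvaged. One checks that this class is \emph{strongly} closed under inseparability, and proves Theorem~\ref{thm: PC} (3)$\Rightarrow$(2) directly by your one-sided compactness argument.
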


\begin{proof}
First, we know that (1) and (2) imply each other by Fagin's theorem \cite{Fagin}.

Second, it is easy to see that the class of all infinite $\tau$-structures is definable by a $ \Sigma_{1}^{1}$ formula. A standard example of such a formula is the following: a set is infinite if and only if it admits an order without top element; if $R$ is a binary relation variable, then this property can be expressed as
$$\exists R (\forall y \neg R y y\land\forall y \forall z \forall w (Ry z \land R z w\implies R yw)\land \forall y \forall z  (R y z \lor R zy \lor y =z)\land \forall y \exists z  R y z ).$$ Let $\psi$ denote the previous formula, and let $L=\mathrm{Mod}(\psi)$ be the class of infinite $\tau$-structures.  Suppose (2) holds for $K$, i.e.\ there exists a formula $\phi\in \Sigma_{1}^{1}$ such that $K=\mathrm{Mod}_{<\omega}(\phi)$. Then $L\cup K=\mathrm{Mod}(\psi\lor\phi)$.  By Theorem~\ref{thm: PC} we know that $L\cup K$ is strongly closed under inseparability.  Hence, for any sets of finite $\tau$-structures $\{ {\cal A}_{i}\text{ : }i\in I\}\subseteq K$, if the set of finite $\tau$-structures $\{ {\cal B}_{j}\text{ : }j\in J\}$ is inseparable from $\{ {\cal A}_{i}\text{ : }i\in I\}$, then 
\[
(L\cup K)\cap \{{\cal B}_{j}\text{ : }j\in J\}\neq \emptyset,
\]
and since the class of finite structures in $L\cup K$ is $K$, we obtain
\[
K\cap \{{\cal B}_{j}\text{ : }j\in J\}\neq \emptyset.
\]

Now assume (3) holds for $K$, then clearly $L\cup K$ is closed under inseparability.  Therefore, by Theorem~\ref{thm: PC} there exists $\phi\in\Sigma_{1}^{1}$ such that $L\cup K=\mathrm{Mod}(\psi)$, and hence $K=\mathrm{Mod}_{<\omega}(\psi)$.  Thus (2) holds, and the theorem is proved.
\end{proof}
Our previous result gives a purely algebraic characterization of when a class of structures is in \textsf{NP}. 

It is interesting that the statement "whenever an arbitrary set of finite structures 
$\{{\cal B}_{j} : j\in J\}$ is inseparable from a set of finite structures 
$\{{\cal A}_{i} : i\in I\}$, then $\{{\cal A}_{i} : i\in I\}$ is also inseparable 
from $\{{\cal B}_{j} : j\in J\}$",  implies $\mathsf{NP}=\mathsf{co\text{-}NP}$.
However, the converse remains open:

\begin{problem}
Does $\mathsf{NP}=\mathsf{co\text{-}NP}$ imply that whenever an arbitrary set of 
finite structures $\{{\cal B}_{j} : j\in J\}$ is inseparable from a set of finite 
structures $\{{\cal A}_{i} : i\in I\}$, then $\{{\cal A}_{i} : i\in I\}$ is also 
inseparable from $\{{\cal B}_{j} : j\in J\}$?
\end{problem}

Now consider the following example:\begin{example}\label{ex: np-példa}
    Let $H$ be the class of graphs having a Hamiltonian cycle, let ${\cal C}_{2n}$ denote the cyclic graph on $2n$ vertex, and ${\cal D}_{n}$ be the graph consisting of two disjoint $n$-cycles.  Let $\cal F$ be a regular ultrafilter over $\omega$, then
$$
{\cal C}=\prod_{n\in\omega}{\cal C}_{2n}/{\cal F}\quad\text{and}\quad{\cal D}=\prod_{n\in\omega}{\cal D}_{n}/{\cal F}
$$
are isomorphic:  it is easy to see that both graphs are isomorphic to a disjoint union of continuum many two-way infinite paths. We know $H\in\textsf{NP}$, and for every $n\in\omega$ we have ${\cal C}_{2n}\in H$ while ${\cal D}_{n}\notin H$. Hence by Theorem~\ref{thm: NP} the family $\{{\cal D}_{n}\text{ : } n\in \omega\}$ is not inseparable from $\{{\cal C}_{2n}\text{ : } n\in \omega\}$.  Thus there is no isomorphism between $\cal C$ and $\cal D$ which induces a bijection between the decomposable relations. This shows that the set of decomposable relations of an ultraproduct depend on the ultrafilter and also on the factors.

\end{example}
    \subsection{PC$_{\Delta}$-classes}
Now we turn to the characterization of $PC_{\Delta}$-classes. Throughout this subsection, we fix the infinite cardinal $\Delta$, and the sequence of integers $s=(s_{i})_{i\in\Delta}$, where every integer occur in infinitely many times as in Definition~\ref{def: type}.
 First, observe that a class $K$ of $\tau$-structures is pseudo-elementary if and only if there is $\Gamma\subset F$ such that
$$K= \mathrm{Mod}(\exists \overrightarrow{X_{\Delta}}\bigwedge_{\gamma\in\Gamma}\gamma)$$
where $F$ is the set of formulas defined in Definition~\ref{def: type} (with  $\kappa=\Delta$). We note that the above formula that defines the class $K$ is a second-order infinitary formula.  First, we deal with the omission of types.

\begin{definition}
    Let $\tau$ be a signature and $\cal A$ be a $\tau$-structure. We say that $\cal A$ omits the type $p\in S^{2}_{\Delta}(\tau)$ if there do not exist  relations $\overrightarrow{R_{\Delta}}$ of kind $(s_{i})_{i\in\Delta}$ on it, such that $\mathbf{tp}({\cal A},\overrightarrow{R_{\kappa}})=p$.
For a class $K$ of $\tau$-structures, we denote
$$\mathbf{O}( K)=\{p\in S^{2}_{\Delta}(\tau)\text{ : all elements of }K \text{ omits } p\}.$$
Moreover, we say that a class of $\tau$-structures $L$ is axiomatizable by types omissions  if there is a set $\Pi\subseteq S^{2}_{\Delta}(\tau)$ with $\Pi\subseteq\mathbf{O}( L)$ and for every structure ${\cal B}\in L^{c}$ there exists a type $p\in \Pi$ such that $\cal B$ realizes $p$.

\end{definition}

\begin{definition}
    Fix the signature $\tau$. Then the class $K$ of $\tau$-structures has property  \begin{itemize}
        \item[(A)] if $\mathcal{A}$ is a $\tau$-structure and for every tuple of relations $\overrightarrow{R_{\Delta}}$ of kind $(s_{i})_{i\in\Delta}$ on $\mathcal{A}$ there exist a structure $\mathcal{B}\in K$ and relations $\overrightarrow{S_{\Delta}}$ of kind $(s_{i})_{i\in\Delta}$ on $\mathcal{B}$ such that $\langle\mathcal{A},R_{i}\rangle_{i\in\Delta}$ and $\langle\mathcal{B},S_{i}\rangle_{i\in\Delta}$ has  isomorphic ultrapowers, then $\mathcal{A}\in K$.

    \end{itemize}
\end{definition}

The previous definition is purely algebraic. The following theorem uses it to characterize those classes that are axiomatizable by types omissions.

\begin{theorem}\label{thm: omitting}
Fix a signature $\tau$ and a class $K$ of $\tau$-structures. Then the following are equivalent: \begin{itemize}
    \item[(1)]$K$ is axiomatizable by types omissions;
    \item[(2)] $K$ has the property (A).
\end{itemize}
    
\end{theorem}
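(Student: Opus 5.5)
The two directions rest on a single observation: by Keisler--Shelah (Shelah's isomorphism theorem \cite{Shelah}) together with Remark~\ref{rem: tp=Th}, the expansions $\langle\mathcal{A},R_{i}\rangle_{i\in\Delta}$ and $\langle\mathcal{B},S_{i}\rangle_{i\in\Delta}$ have isomorphic ultrapowers iff they are elementarily equivalent, i.e.\ iff $\mathbf{tp}(\mathcal{A},\overrightarrow{R_{\Delta}})=\mathbf{tp}(\mathcal{B},\overrightarrow{S_{\Delta}})$. Hence property (A) can be restated as: if every $\Delta$-2-type realized in $\mathcal{A}$ is also realized in some member of $K$, then $\mathcal{A}\in K$. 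Writing $\mathbf{R}(\mathcal{A})$ for the set of $\Delta$-2-types realized in $\mathcal{A}$, this reads
\[
\mathbf{R}(\mathcal{A})\subseteq\bigcup_{\mathcal{B}\in K}\mathbf{R}(\mathcal{B})\ \Longrightarrow\ \mathcal{A}\in K .
\]
I would prove this reformulation first and then use it for both directions.

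For (1)$\Rightarrow$(2), fix $\Pi\subseteq\mathbf{O}(K)$ witnessing axiomatizability by omissions. Let $\mathcal{A}$ satisfy the hypothesis of (A), and suppose $\mathcal{A}\notin K$. Then $\mathcal{A}$ realizes some $p\in\Pi$, say by $\overrightarrow{R_{\Delta}}$. By (A) there are $\mathcal{B}\in K$ and $\overrightarrow{S_{\Delta}}$ whose expansion has an ultrapower isomorphic to one of $\langle\mathcal{A},\overrightarrow{R_{\Delta}}\rangle$. Łoś's lemma gives $\mathbf{tp}(\mathcal{B},\overrightarrow{S_{\Delta}})=p$, so $\mathcal{B}$ realizes $p$, contradicting $p\in\mathbf{O}(K)$. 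This direction only needs the easy half of Keisler--Shelah, namely that isomorphic ultrapowers imply elementary equivalence.

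For (2)$\Rightarrow$(1), take $\Pi=\mathbf{O}(K)$, so the inclusion $\Pi\subseteq\mathbf{O}(K)$ is immediate. Let $\mathcal{A}\notin K$. Since (A) fails for $\mathcal{A}$, there is some $\overrightarrow{R_{\Delta}}$ on $\mathcal{A}$ such that no $\mathcal{B}\in K$ carries $\overrightarrow{S_{\Delta}}$ giving an expansion with an isomorphic ultrapower. Put $p=\mathbf{tp}(\mathcal{A},\overrightarrow{R_{\Delta}})$. I claim $p\in\mathbf{O}(K)$. If some $\mathcal{B}\in K$ realized $p$ via $\overrightarrow{S_{\Delta}}$, then the two expansions would be elementarily equivalent by Remark~\ref{rem: tp=Th}, hence have isomorphic ultrapowers by Shelah's theorem, contrary to the choice of $\overrightarrow{R_{\Delta}}$. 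So $p\in\Pi$, and $\mathcal{A}$ realizes $p$, which is exactly what (1) requires.

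The main point to check is the appeal to Shelah's theorem. The expanded signature has size $|\tau|+\Delta$, which may be uncountable, but Shelah's isomorphism theorem holds in arbitrary signatures, so this is fine. Two further details need a line each: $p$ actually lies in $S^{2}_{\Delta}(\tau)$, which holds because a realized type is finitely satisfiable and complete; and the passage from "same closed theory" to "elementarily equivalent" goes through Remark~\ref{rem: tp=Th}. No compactness of the type space is needed.
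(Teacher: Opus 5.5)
Your proof is correct and follows essentially the same route as the paper. In both, ``isomorphic ultrapowers'' is identified with equality of the realized $\Delta$-2-types: the easy half of Keisler--Shelah gives (1)$\Rightarrow$(2), and Shelah's theorem gives (2)$\Rightarrow$(1) with $\Pi=\mathbf{O}(K)$ as the witness. The only differences are cosmetic: the paper phrases (1)$\Rightarrow$(2) as $\mathbf{O}(K)\subseteq\mathbf{O}(\{\mathcal{A}\})$ and argues (2)$\Rightarrow$(1) by contradiction, whereas you argue the first by contradiction and the second directly via the contrapositive of (A).
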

\begin{proof}
    First assume that $K$ is axiomatizable by types omissions, so there is a set $\Pi\subseteq S^{2}_{\Delta}(\tau)$ with $\Pi\subseteq\mathbf{O}( K)$ and for every structure ${\cal C}\in K^{c}$ there exists a type $p\in \Pi$ such that $\cal C$ realizes $p$. Let $\mathcal{A}$ be a $\tau$-structure, such that for every tuple of relations $\overrightarrow{R_{\Delta}}$ of kind $(s_{i})_{i\in\Delta}$ on $\mathcal{A}$ there exist a structure $\mathcal{B}\in K$ and relations $\overrightarrow{S_{\Delta}}$ of kind $(s_{i})_{i\in\Delta}$ on $\mathcal{B}$ such that $\langle\mathcal{A},R_{i}\rangle_{i\in\Delta}$ and $\langle\mathcal{B},S_{i}\rangle_{i\in\Delta}$ has isomorphic ultrapowers, so we have $\mathbf{tp}({\cal A},\overrightarrow{R_{\Delta}})=\mathbf{tp}({\cal B},\overrightarrow{S_{\Delta}})$. Thus there is no $p\in S^{2}_{\Delta}(\tau)$ that $\mathcal A$ realises while every structure in $K$ omits it, hence $\mathbf{O}(K)\subseteq\mathbf{O}(\{\mathcal A\})$, therefore $\mathcal A$ omits $\Pi$, and consequently ${\cal A}\in K$.  Therefore we obtain that $K$ has property (A).

Now assume that $K$ has property (A).  We claim that $\mathbf{O}(K)$ axiomatizes $K$.  Towards a contradiction, suppose there is ${\cal A}\in K^{c}$ with $\mathbf{O}(K)\subseteq\mathbf{O}(\{\cal A\})$.  Then for every tuple of relations $\overrightarrow{R_{\Delta}}$ of kinds $(s_{i})_{i\in\Delta}$ on $\mathcal A$ there exists a structure $\mathcal B$ and relations $\overrightarrow{S_{\Delta}}$ of kinds $(s_{i})_{i\in\Delta}$ on $\mathcal B$ such that $\langle\mathcal{A},R_{i}\rangle_{i\in\Delta}$ and $\langle\mathcal{B},S_{i}\rangle_{i\in\Delta}$ have isomorphic ultrapowers, (otherwise $\mathbf{tp}({\cal A},\overrightarrow{R_{\Delta}})\in\mathbf{O}(K)$ would follow, which is impossible). Since $K$ has property (A), ${\cal A}\in K$, which is a contradiction. This completes the proof.
\end{proof}

\begin{lemma}\label{lem: Pc-zárt}
    Fix a signature $\tau$ and   a class $K$ of $\tau$-structures. Then $K\in PC_{\Delta}$ if and only if $K^{c}$ is axiomatizable by a closed set of types omissions, that is, there exists a closed set $\Pi\subseteq S^{2}_{\Delta}(\tau)$  with $\Pi\subseteq\mathbf{O}( K^{c})$ and for every structure ${\cal B}\in K$ there exists a type $p\in \Pi$ such  $\cal B$ realizes $p$.
\end{lemma}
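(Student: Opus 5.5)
The plan is to prove both directions by identifying a closed subset of $S^{2}_{\Delta}(\tau)$ with a set of formulas in $F$. I will then use the observation made at the start of this subsection: $K\in PC_{\Delta}$ iff $K=\mathrm{Mod}(\exists\overrightarrow{X_{\Delta}}\bigwedge_{\gamma\in\Gamma}\gamma)$ for some $\Gamma\subseteq F$.

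The one structural fact about $S^{2}_{\Delta}(\tau)$ that I need is the following. The set $F$ is closed under Boolean operations, so $S^{2}_{\Delta}(\tau)\setminus N_i=N_{i'}$ whenever $\phi_{i'}=\neg\phi_i$. Moreover, finite intersections of basic sets are again basic, since $N_i\cap N_j=N_k$ for $\phi_k=\phi_i\wedge\phi_j$. Hence every open set is a union of basic sets $N_i$. Taking complements, every closed set has the form
$$\Pi=\bigcap_{i\in J}N_i=\{p\in S^{2}_{\Delta}(\tau)\text{ : }\phi_i\in p\text{ for all }i\in J\}$$
for some $J\subseteq\lambda$. Conversely, every set of this form is closed.

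($\Rightarrow$) Assume $K\in PC_{\Delta}$. Rename the new symbols of $\tau'$ as the variables $X_i$, padding with dummy variables if necessary, and turn function symbols into relations. This gives $\Gamma\subseteq F$ with $K=\mathrm{Mod}(\exists\overrightarrow{X_{\Delta}}\bigwedge\Gamma)$. Put $\Pi=\{p\in S^{2}_{\Delta}(\tau): \Gamma\subseteq p\}=\bigcap_{\gamma\in\Gamma}N_{\gamma}$, which is closed.
\begin{itemize}
\item[(a)] Every $\mathcal B\in K$ has an expansion $\langle\mathcal B,R_i\rangle_{i\in\Delta}\models\Gamma$, so $\mathcal B$ realizes $\mathbf{tp}(\mathcal B,\overrightarrow{R_{\Delta}})\in\Pi$.
\item[(b)] If some $\mathcal C\in K^{c}$ realized a type $p\in\Pi$, then by Remark~\ref{rem: tp=Th} the corresponding expansion of $\mathcal C$ would satisfy $\Gamma$. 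That would give $\mathcal C\in K$, a contradiction, so $\Pi\subseteq\mathbf O(K^{c})$.
\end{itemize}

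($\Leftarrow$) Given a closed $\Pi$ as in the statement, write $\Pi=\bigcap_{i\in J}N_i$ and set $\Gamma=\{\phi_i: i\in J\}$. I claim $K=\mathrm{Mod}(\exists\overrightarrow{X_{\Delta}}\bigwedge\Gamma)$, which makes $K$ a $PC_{\Delta}$ class.
\begin{itemize}
\item[(a)] If $\mathcal B\in K$, then $\mathcal B$ realizes some $p\in\Pi$ via relations $\overrightarrow{R_{\Delta}}$. Since $\Gamma\subseteq p$, the expansion $\langle\mathcal B,R_i\rangle_{i\in\Delta}$ satisfies $\Gamma$.
\item[(b)] Conversely, suppose $\langle\mathcal A,R_i\rangle\models\Gamma$. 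Then $\mathbf{tp}(\mathcal A,\overrightarrow{R_{\Delta}})$ contains $\Gamma$, so it lies in $\Pi\subseteq\mathbf O(K^{c})$. A structure in $K^c$ cannot realize a type in $\mathbf O(K^c)$, so $\mathcal A\notin K^{c}$, i.e.\ $\mathcal A\in K$.
\end{itemize}

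The only point I expect to require care is the bookkeeping in the forward direction: matching an arbitrary expansion $\tau'$ with $|FO(\tau')\setminus FO(\tau)|=\Delta$ to the fixed variable sequence $\overrightarrow{X_{\Delta}}$ of kind $s$. I would handle it by relationalising function and constant symbols, which is allowed since each arity occurs $\Delta$ times in $s$. Unused variables are left unconstrained, so they do not affect which structures satisfy $\exists\overrightarrow{X_{\Delta}}\bigwedge\Gamma$. The topological part is routine once the basis is seen to be closed under complements and finite intersections.
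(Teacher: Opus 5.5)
Your proposal is correct and follows the paper's proof essentially step for step. In both directions you pass between the closed set $\Pi=\bigcap_{\gamma\in\Gamma}N_\gamma$ and the defining formula $\exists\overrightarrow{X_{\Delta}}\bigwedge\Gamma$, exactly as the paper does. You also make explicit two points the paper leaves implicit: that every closed subset of $S^{2}_{\Delta}(\tau)$ is an intersection of basic sets $N_i$ (because $F$ is closed under negation and conjunction), and that in the forward direction every $\mathcal B\in K$ realizes a type in $\Pi$.
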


\begin{proof}
The proof is very similar to Lemma~\ref{lem: zárt}. Assume $K\in PC_{\Delta}$.  Then there is $\Gamma\subset F$ such that
$$K= \mathrm{Mod}(\exists \overrightarrow{X_{\Delta}}\bigwedge_{\gamma_{i}\in\Gamma}\gamma_{i}),$$
and hence
$$\Pi=\{p\in S^{2}_{\Delta}(\tau)\text{ : }\gamma\in p \text{ for all }\gamma_{i}\in\Gamma\}=\bigcap_{\gamma_{i}\in\Gamma} N_{i}$$
is closed in $S^{2}_{\Delta}(\tau)$.  Clearly $\Pi\subseteq\mathbf{O}( K^{c})$, since if there were ${\cal B}\in K^{c}$ and $p\in\Pi$ with ${\cal B}$ realizing $p$, then
$${\cal B}\in\mathrm{Mod}(\exists \overrightarrow{X_{\Delta}}\bigwedge_{\gamma_{i}\in\Gamma}\gamma_{i}),$$
which is impossible.

Now suppose $K^{c}$ is axiomatizable by a closed set of types omissions $\Pi$.  Therefore there is a set $\Sigma\subseteq F$ such that
$$\Pi=\bigcap_{\gamma_i\in \Sigma} N_{i}.$$
We claim that
$$K= \mathrm{Mod}(\exists \overrightarrow{X_{\Delta}}\bigwedge_{\gamma_{i}\in\Sigma}\gamma_{i}).$$
Indeed, if ${\cal A}\in K$ then there is $p\in\Pi$ that ${\cal A}$ realises, hence
$${\cal A}\in \mathrm{Mod}(\exists \overrightarrow{X_{\Delta}}\bigwedge_{\gamma_{i}\in\Sigma}\gamma_{i}),$$
and, for every ${\cal B}\in K^{c}$ we have
$${\cal B}\notin\mathrm{Mod}(\exists \overrightarrow{X_{\Delta}}\bigwedge_{\gamma_{i}\in\Sigma}\gamma_{i}).$$
\end{proof}

Consider the following purely algebraic   definitions.

\begin{definition}
    We call an operation $\mathbf{F}$ a \emph{$\Delta$-switch operation} if it assigns to each pair $\big( {\cal A},\overrightarrow{R_{\Delta}}\big)$ the relations $\overrightarrow{S_{\Delta}}$ $$\mathbf{F}\big( {\cal A},\overrightarrow{R_{\Delta}}\big)=\overrightarrow{S_{\Delta}},$$ where $\cal A$ is a $\tau$-structure and both $\overrightarrow{R_{\Delta}}$ and $\overrightarrow{S_{\Delta}}$ are relations of kind $s$ on $\cal A$.

\end{definition}
\begin{definition}\label{def: (B)}
   Fix a signature $\tau$. The class $K$ of $\tau$-structures has property

\begin{itemize}
\item[(B)] if there exists a $\Delta$-switch operation $\mathbf{F}$ such that for every set $I$ and every ultrafilter ${\cal F}$ over $I$ the following holds:

\noindent If ${\cal A}_{i}\in K$ and $\overrightarrow{R^{i}_{\Delta}}$ are relations of kind $s$ on $\mathcal{A}_{i}$, and there do \emph{not} exist structures ${\cal C}_{i}\in K^{c}$ with relations $\overrightarrow{S^{i}_{\Delta}}$ of kind $s$ on ${\cal C}_{i}$ such that, for all $i\in I$, the pairs $\langle \mathcal{A}_{i},R^{i}_{j}\rangle_{j\in \Delta}$ and $\langle \mathcal{C}_{i},S^{i}_{j}\rangle_{j\in \Delta}$ have isomorphic ultrapowers. Then if  $\mathcal{B}$  $\tau$-structure and $\overrightarrow{R_{\Delta}}$ is a tuple relations  of kind $s$ on  $\mathcal{B}$, such that
\[
\langle \mathcal{B},R_{j}\rangle_{j\in \Delta}
\quad\text{and}\quad
\prod_{i\in I} \langle {\cal A}_{i}, \mathbf{F}\big( {\cal A}_{i},\overrightarrow{R^{i}}\big)_{j}\rangle_{j\in \Delta}/{\cal F}
\]
have isomorphic ultrapowers, then ${\cal B}\in K$.
\end{itemize}

There are many (indeed, a proper class) of $\Delta$-switch operations. For example, an operation $\mathbf{G}$ that, for every structure $\cal A$, assigns to each $k$-ary relation $R \subseteq {}^{k}A$ a $k$-ary relation $\mathbf{G}(R) \subseteq {}^{k}A$ for each $k \in \omega$, naturally determines a $\Delta$-switch operation. As we shall see in the proof of  Theorem~\ref{thm: PC_DELTA}, a $\Delta$-switch operation satisfying property (B) is typically not unique, in general, there is a lot of freedom in how it can be chosen.

\end{definition}

\begin{remark}
    Defintion \ref{def: (B)} can be easily translated into the language of type omissions as follows.  $K$ has  property (B) if there exists a $\Delta$-switch operation $\mathbf{F}$ such that for every set $I$ and every ultrafilter ${\cal F}$ over $I$, the following holds: if ${\cal A}_{i}\in K$ and $\overrightarrow{R^{i}_{\Delta}}$ are relations of kind $s$ on $\mathcal{A}_{i}$ with
\[
\mathbf{tp}\big( \mathcal{A}_{i},\overrightarrow{R^{i}_{\Delta}}\big)\in \mathbf{O}(K^{c})\quad\text{for all }i\in I,
\]
then
\[
\mathbf{tp}\big(\mathcal{A},\overrightarrow{P_{\Delta}}\big)\in \mathbf{O}(K^{c}),
\]
where
\[
\langle\mathcal{A},P_{j}\rangle_{j\in \Delta}
=\prod_{i\in I} \langle {\cal A}_{i}, \mathbf{F}\big( {\cal A}_{i},\overrightarrow{R^{i}}\big)_{j}\rangle_{j\in \Delta}/{\cal F}.
\]

\end{remark}

Finally, we turn  to the final theorem of the section.
\begin{theorem}\label{thm: PC_DELTA}
    Fix a signature $\tau$ and a class $K$ of $\tau$-structures. Then $K\in PC_{\Delta}$ iff $K$  has  property (B) and $K^{c}$ has  property (A).

\end{theorem}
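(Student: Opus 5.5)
The plan is to reduce everything to Lemma~\ref{lem: Pc-zárt}. That lemma turns membership in $PC_{\Delta}$ into a statement about types: there must be a closed $\Pi\subseteq S^{2}_{\Delta}(\tau)$ with $\Pi\subseteq\mathbf{O}(K^{c})$ such that every member of $K$ realizes some type in $\Pi$. Closedness in $S^{2}_{\Delta}(\tau)$ will come from Lemma~\ref{lem: ult-lezár}, which says that taking ultraproducts of realized types computes the topological closure. Property (B), in its type-omission form from the remark after Definition~\ref{def: (B)}, will supply this closure. Property (A) for $K^{c}$ will be used through Theorem~\ref{thm: omitting}, which says $K^{c}$ is axiomatized by omitting $\mathbf{O}(K^{c})$. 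Concretely, this means: a structure lies in $K$ if and only if it realizes some type in $\mathbf{O}(K^{c})$.

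\emph{($\Rightarrow$).} Suppose $K=\mathrm{Mod}(\exists\overrightarrow{X_{\Delta}}\bigwedge\Gamma)$, and let $\Pi=\bigcap_{\gamma_i\in\Gamma}N_i$ be as in the proof of Lemma~\ref{lem: Pc-zárt}. If $\mathcal{A}$ realizes some type in $\Pi$, then $\mathcal{A}\in K$. Since $\Pi\subseteq\mathbf{O}(K^{c})$, every $\mathcal{A}\in K$ realizes some type in $\mathbf{O}(K^{c})$, and conversely a realizer of a type in $\mathbf{O}(K^{c})$ cannot lie in $K^{c}$. So $K^{c}$ is axiomatized by omitting $\mathbf{O}(K^{c})$, and Theorem~\ref{thm: omitting} gives property (A) for $K^{c}$.

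For (B) I would define the switch $\mathbf{F}$ as follows. If $\mathcal{A}\in K$, fix witnesses $\overrightarrow{W}$ on $\mathcal{A}$ with $\langle\mathcal{A},W_j\rangle\models\Gamma$ and put $\mathbf{F}(\mathcal{A},\overrightarrow{R})=\overrightarrow{W}$. Otherwise put $\mathbf{F}(\mathcal{A},\overrightarrow{R})=\overrightarrow{R}$. Now take an ultraproduct of the structures $\langle\mathcal{A}_i,\mathbf{F}(\mathcal{A}_i,\overrightarrow{R^i})\rangle$ with $\mathcal{A}_i\in K$. By Łoś's lemma it satisfies $\Gamma$, so its type lies in $\Pi\subseteq\mathbf{O}(K^{c})$. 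Any $\mathcal{B}$ with an expansion having isomorphic ultrapowers with it realizes the same type and hence lies in $K$. The freedom in choosing the witnesses matches the remark that $\mathbf{F}$ is not unique.

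\emph{($\Leftarrow$).} Assume $\mathbf{F}$ witnesses (B) and $K^{c}$ has (A). Let
\[
U=\{\mathbf{tp}(\mathcal{A},\mathbf{F}(\mathcal{A},\overrightarrow{R})) : \mathcal{A}\in K,\ \mathbf{tp}(\mathcal{A},\overrightarrow{R})\in\mathbf{O}(K^{c})\},
\]
and set $\Pi=\overline{U}$. The key steps are as follows.

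First, every $\mathcal{A}\in K$ realizes some type in $\mathbf{O}(K^{c})$; this is Theorem~\ref{thm: omitting} applied to $K^{c}$. Applying (B) with a principal ultrafilter then shows that $\mathcal{A}$ also realizes a type in $U$.

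Second, $U\subseteq\mathbf{O}(K^{c})$, again by (B) with a principal ultrafilter.

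Third, $\Pi\subseteq\mathbf{O}(K^{c})$. By the second part of Lemma~\ref{lem: ult-lezár}, each $p\in\overline{U}$ is the type of an ultraproduct $\prod\langle\mathcal{A}_l,\overrightarrow{S^l}\rangle/\mathcal{G}$ whose factor types lie in $U$. I then need (B) to show that $p$ is omitted by $K^{c}$. Suppose $\mathcal{C}\in K^{c}$ realized $p$. Then, by Keisler--Shelah, $\mathcal{C}$ with the realizing relations and the ultraproduct would have isomorphic ultrapowers. This contradicts (B), provided the ultraproduct is of the form appearing in (B).

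Lemma~\ref{lem: Pc-zárt} then yields $K\in PC_{\Delta}$.

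The main obstacle is the third step. Lemma~\ref{lem: ult-lezár} produces arbitrary relations $\overrightarrow{S^l}$ whose types lie in $U$, but these are not literally of the form $\mathbf{F}(\mathcal{A}_l,\overrightarrow{R^l})$. I would handle this in one of two ways. The first is to redefine $U$ as the set of all types $\mathbf{tp}(\mathcal{A},\overrightarrow{S})$ such that $(\mathcal{A},\overrightarrow{S})$ has isomorphic ultrapowers with some $\mathbf{F}$-image of an $\mathbf{O}(K^{c})$-expansion of a $K$-structure. In that case I would check that (B) is stable under this replacement, using Theorem~\ref{thm: fubini} to absorb the extra ultrapowers into the product ultrafilter. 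The second is to use that $\mathbf{F}$-images already realize every type in $U$, choose for each factor an $\mathbf{F}$-image with the same type, and note that the ultraproduct then has the same type $p$.
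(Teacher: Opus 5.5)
Your proposal is correct and follows the paper's proof: the same reduction to Lemma~\ref{lem: Pc-zárt}, the same switch operation $\mathbf{F}$ (fixed witnesses on members of $K$, identity elsewhere), and the same closed set $\overline{U}$, placed inside $\mathbf{O}(K^{c})$ via (B) with a trivial ultrafilter and Lemma~\ref{lem: ult-lezár}. In the forward direction your direct \L{}o\'s argument for (B) is just a shortcut for the paper's appeal to closedness of $\Pi$; in your first backward step (B) is not needed, since $\mathcal{A}$ realizes $\mathbf{tp}(\mathcal{A},\mathbf{F}(\mathcal{A},\overrightarrow{R}))\in U$ by definition of $U$.

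The obstacle you flag in the third step is real, and the paper simply asserts that the ultraproduct can be taken to consist of $\mathbf{F}$-images. Your second repair supplies exactly the missing justification: replace each factor by an $\mathbf{F}$-image of the same type, so that by \L{}o\'s the ultraproduct still has type $p$ and (B) applies directly. The Fubini-based first option is unnecessary.
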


\begin{proof}
    First suppose that $K\in PC_{\Delta}$. Then, by Lemma~\ref{lem: Pc-zárt}, $K^{c}$ is axiomatizable by a closed set closed set of types omissions, that is, there exists a closed set $\Pi\subseteq S^{2}_{\Delta}(\tau)$ with $\Pi\subseteq\mathbf{O}(K^{c})$ and for every structure ${\cal A}\in K$ there exists a type $p\in \Pi$ such that ${\cal A}$ realizes $p$. Hence, by Theorem~\ref{thm: omitting}, $K^{c}$ has property (A). Note that since $K^{c}$ is axiomatizable by a set $\Pi\subseteq S^{2}_{\Delta}(\tau)$ of omitted types, then for every ${\cal A}\in K$ there is a tuple of relations $\overrightarrow{S_{\Delta}({\cal A})}$ of kind $s$ on $\mathcal{A}$ such that
\[
\mathbf{tp}\big(\mathcal{A},\overrightarrow{S_{\Delta}({\cal A})}\big)\in \Pi.
\]
Define the $\Delta$-switch operation $\mathbf{F}$ as follows: if ${\cal A}\in K$ and $\overrightarrow{R_{\Delta}}$ are relations of kind $s$ on $\mathcal{A}$, set
\[
\mathbf{F}\big(\mathcal{A},\overrightarrow{R_{\Delta}}\big)=\overrightarrow{S_{\Delta}({\cal A})},
\]
and if ${\cal B}\in K^{c}$ and $\overrightarrow{P_{\Delta}}$ are relations of kind $s$ on $\mathcal{B}$, set
\[
\mathbf{F}\big(\mathcal{B},\overrightarrow{P_{\Delta}}\big)=\overrightarrow{P_{\Delta}}.
\] Let ${\cal A}_{i}\in K$ and $\overrightarrow{R^{i}_{\Delta}}$ be relations of kind $s$ on $\mathcal{A}_{i}$ with
\[
\mathbf{tp}\big( \mathcal{A}_{i},\overrightarrow{R^{i}_{\Delta}}\big)\in \mathbf{O}(K^{c})\quad\text{for all }i\in I,
\]
let ${\cal F}$ be an ultrafilter over $I$, and set
\[
\langle\mathcal{A},P_{j}\rangle_{j\in \Delta}
=\prod_{i\in I} \langle {\cal A}_{i}, \mathbf{F}\big( {\cal A}_{i},\overrightarrow{R^{i}}\big)_{j}\rangle_{j\in\Delta}/{\cal F},
\]
furthermore consider the set
\[
U=\{\mathbf{tp}\big({\cal A}_{i},\mathbf{F}\big( {\cal A}_{i},\overrightarrow{R^{i}}\big)_{j}\rangle_{j\in \Delta}\big)\in S^{2}_{\Delta}(\tau)\text{ : } i\in I \}.
\]
By the construction of ${\mathbf{F}}$ we have $U\subseteq \Pi$, and by Lemma~\ref{lem: ult-lezár} we obtain $\mathbf{tp}\big({\cal A}, \overrightarrow{P}\big)\in \overline{U}$. Since $\Pi$ is closed, $\overline{U}\subseteq \Pi$. Hence $K$ has property (B).

Now suppose that $K$ has property (B) and $K^{c}$ has property (A). Then, by (A), $K^{c}$ is axiomatizable by a set of types omissions, for example $\mathbf{O}(K^{c})$. Furthermore, there exists a $\Delta$-switch operation $\mathbf{F}$ such that for every set $I$ and every ultrafilter ${\cal F}$ over $I$ the following holds: if ${\cal A}_{i}\in K$ and $\overrightarrow{R^{i}_{\Delta}}$ are relations of kind $s$ on $\mathcal{A}_{i}$ with
\[
\mathbf{tp}\big( \mathcal{A}_{i},\overrightarrow{R^{i}_{\Delta}}\big)\in \mathbf{O}(K^{c})\quad\text{for all }i\in I,
\]
then
\[
\mathbf{tp}\big(\mathcal{A},\overrightarrow{P_{\Delta}}\big)\in \mathbf{O}(K^{c})
\] where
\[
\langle\mathcal{A},P_{j}\rangle_{j\in \Delta}
=\prod_{i\in I} \langle {\cal A}_{i}, \mathbf{F}\big( {\cal A}_{i},\overrightarrow{R^{i}}\big)_{j}\rangle_{j\in \Delta}/{\cal F}.
\] 
Consider the set
\[
V=\big\{ \mathbf{tp}\big(\mathcal{A},\overrightarrow{S_{\Delta}}\big)\in S^{2}_{\Delta}(\tau)\ :\ \mathcal{A}\in K,\ \overrightarrow{S_{\Delta}}=\mathbf{F}\big( {\cal A},\overrightarrow{R}_{\Delta}\big)\text{ with } \mathbf{tp}\big( {\cal A},\overrightarrow{R}_{\Delta}\big)\in\mathbf{O}(K^{c})\big\}.
\] We claim that $K^{c}$ is axiomatizable by the closed set $\overline{V}$ of closed set of types omits (here $\overline{V}$ is the topological closure of $V$ in $S^{2}_{\Delta}(\tau)$). First note that $V\subseteq \mathbf{O}(K^{c})$: if $p\in V$ then there is ${\cal A}\in K$ and relations $\overrightarrow{R_{\Delta}}$ of kind $s$ on $\mathcal{A}$ such that
\[
\mathbf{tp}\big( \mathcal{A},\overrightarrow{R_{\Delta}}\big)\in \mathbf{O}(K^{c}),
\] with $$p=\mathbf{tp}\big(\mathcal{A},\mathbf{F}\big(\mathcal{A},\overrightarrow{R_{\Delta}}\big)\big).$$
Let ${\cal F}$ be the trivial ultrafilter on a singleton set. Then by condition (B) we have $p\in \mathbf{O}(K^{c})$, hence $V\subseteq \mathbf{O}(K^{c})$.
Moreover, applying Lemma~\ref{lem: ult-lezár} we obtain that for every $p\in\overline{V}$ there exist ${\cal A}_{i}\in K$ and relations $\overrightarrow{R^{i}_{\Delta}}$ of kind $s$ on $\mathcal{A}_{i}$ such that
\[
\mathbf{tp}\big( \mathcal{A}_{i},\mathbf{F}\big( {\cal A}_{i},\overrightarrow{R^{i}}\big)\big)\in V\quad\text{for all }i\in I,
\]
and
\[
p=\mathbf{tp}\big(\mathcal{A},\overrightarrow{P_{\Delta}}\big),
\]
where
\[
\langle\mathcal{A},P_{j}\rangle_{j\in \Delta}
=\prod_{i\in I} \langle {\cal A}_{i}, \mathbf{F}\big( {\cal A}_{i},\overrightarrow{R^{i}}\big)_{j}\rangle_{j\in \Delta}/{\cal F}.
\]
Therefore, by property (B) we have $p\in \mathbf{O}(K^{c})$, and hence $\overline{V}\subseteq \mathbf{O}(K^{c})$. Finally, let ${\cal A}\in K$. Since $K^{c}$ is axiomatizable by the set $\mathbf{O}(K^{c})$ of omitted types, there exist relations $\overrightarrow{R_{\Delta}}$ of kind $s$ on $\cal A$ such that
\[
\mathbf{tp}\big(\mathcal{A},\overrightarrow{R_{\Delta}}\big)\in \mathbf{O}(K^{c}).
\]
Put
\[
\overrightarrow{S_{\Delta}}= \mathbf{F} \big(\mathcal{A},\overrightarrow{R_{\Delta}}\big).
\]
Then
$\mathbf{tp}\big(\mathcal{A},\overrightarrow{S_{\Delta}}\big)\in V,
$
which completes the proof of the theorem.
\end{proof}

\section{Second-order classes}\label{sec: second-order}
In this section we deal with the problem of characterization of second-order axiomatizable classes. Throughout the remainder of the paper we fix the signature $\tau$.

\begin{definition}
    Let $\{\mathcal{D}_{i} : i\in I\}$ and $\{\mathcal{E}_{j} : j\in J\}$ be two sets of $\tau$-structures. We say that $\{\mathcal{D}_{i} : i\in I\}$ and $\{\mathcal{E}_{j} : j\in J\}$ are \emph{completely inseparable} from each other if there exist ultraproducts
\[
\mathcal{A}_{0}=\prod_{i\in I}\mathcal{D}_{i}/\mathcal{G}
\qquad\text{and}\qquad
\mathcal{B}_{0}=\prod_{j\in J}\mathcal{E}_{j}/\mathcal{G}',
\]
and $\omega$-long ultrachains $\langle \mathcal{A}_{n}:n\in\omega\rangle$, $\langle \mathcal{B}_{n}:n\in\omega\rangle$ starting from them with limits $\mathcal{A}_{\omega}$ and $\mathcal{B}_{\omega}$, together with an isomorphism
\[
f:\mathcal{A}_{\omega}\longrightarrow\mathcal{B}_{\omega}
\]
such that $f$ induces a bijection between the decomposable relations of $\mathcal{A}_{\omega}$ and those of $\mathcal{B}_{\omega}$, e.g. for every
$R\in\Upsilon_{\mathrm{lim}}$ we have $f^{*}(R)\in\Upsilon^{0}_{\mathrm{lim}}$, and for every
$S\in\Upsilon'_{\mathrm{lim}}$ we have $(f^{-1})^{*}(S)\in\Upsilon_{\mathrm{lim}}$ (where $\Upsilon_{\mathrm{lim}}$ is the set of  decomposable relations with respect to the family $\{ {\cal D}_{i}\colon i\in I\}$ and  $\Upsilon^{0}_{\mathrm{lim}}$ is the set of  decomposable relations with respect to the family $\{ {\cal E}_{j}\colon j\in J\}$).

\end{definition}
Note that the previous definition depends solely on the internal structure of the sets of structures. The following lemma will be crucial for our purposes.
\begin{lemma}\label{lem: főlemma}
Let $\{\mathcal{D}_{i} : i\in I\}$ and $\{\mathcal{E}_{j} : j\in J\}$ be two sets of $\tau$-structures. Then the following are equivalent:\begin{itemize}
    \item[(1)] $\{\mathcal{D}_{i} : i\in I\}$ and $\{\mathcal{E}_{j} : j\in J\}$ are completely inseparable from each other;
    \item[(2)] there exist ultrafilters ${\cal G}$ over $I$ and ${\cal G'}$ over $J$ such that
\[
\{\phi\in SO\text{ : }\{i\in I\text{ : }{\cal D}_{i}\models \phi\}\in {\cal G}\}
=
\{\psi\in SO\text{ : }\{j\in J\text{ : }{\cal E}_{j}\models \psi\}\in {\cal G'}\}.
\]

\end{itemize}
    
\end{lemma}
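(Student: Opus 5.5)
The direction (1)$\Rightarrow$(2) should follow from the limit version of Łoś's lemma, and the direction (2)$\Rightarrow$(1) from a back-and-forth construction of two ultrachains in which, at each successor step, the isomorphic-ultrapowers theorem is used to match one new decomposable relation. The isomorphism $f$ is then the union of the stagewise partial isomorphisms.

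\textbf{(1)$\Rightarrow$(2).} Take the ultrafilters $\mathcal G,\mathcal G'$ witnessing complete inseparability, together with the ultrachains and the isomorphism $f$. Lemma~\ref{lem: Łos-limit} gives, for every $\phi\in SO$,
\[
\{i:\mathcal D_i\models\phi\}\in\mathcal G \iff (\mathcal A_\omega,\Upsilon_{\mathrm{lim}})\models_\lambda\phi,
\]
and the analogous equivalence for the family $\{\mathcal E_j\}$. Because $f$ is an isomorphism that induces a bijection $\Upsilon_{\mathrm{lim}}\to\Upsilon^0_{\mathrm{lim}}$, a routine induction on the Henkin semantics shows that $(\mathcal A_\omega,\Upsilon_{\mathrm{lim}})$ and $(\mathcal B_\omega,\Upsilon^0_{\mathrm{lim}})$ satisfy the same second-order formulas. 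Chaining these equivalences gives (2).

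\textbf{(2)$\Rightarrow$(1).} Set $\mathcal A_0=\prod\mathcal D_i/\mathcal G$ and $\mathcal B_0=\prod\mathcal E_j/\mathcal G'$. The plan is to build the chains $\mathcal A_n,\mathcal B_n$ by induction, maintaining the following invariant at stage $n$:
\begin{itemize}
\item[(a)] finite tuples $\bar R^n$ of decomposable relations on $\mathcal A_n$ and $\bar S^n$ on $\mathcal B_n$;
\item[(b)] an elementary matching, i.e.\ equal types in the sense of Lemma~\ref{lem: ult-lezár}, of the expanded decomposable-Henkin models $(\langle\mathcal A_n,\bar R^n\rangle,\Upsilon_n)$ and $(\langle\mathcal B_n,\bar S^n\rangle,\Upsilon'_n)$ with respect to all of $SO$ with parameters.
\end{itemize}
Hypothesis (2) and Lemma~\ref{lem: Łos} give this invariant at stage $0$ with empty tuples.

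Suppose a new decomposable relation $R$ on $\mathcal A_n$ must be handled, the back and forth tasks being scheduled by a bookkeeping enumeration. I would proceed as in the proof of (4)$\Rightarrow$(2) of Theorem~\ref{thm: PC}.
\begin{itemize}
\item[(i)] For each finite fragment $\lambda$ of the $SO$-theory of $\langle\mathcal A_n,\bar R^n,R\rangle$, the invariant yields some $S^\lambda\in\Upsilon'_n$ realizing $\lambda$, by reading $\exists X\bigwedge\lambda$ on both sides.
\item[(ii)] An ultrapower over $[\Lambda]^{<\omega}$ produces a single decomposable $S$ on an ultrapower of $\mathcal B_n$. By Theorem~\ref{thm: fubini} it is decomposable for the product ultrafilter, and it satisfies the full theory.
\item[(iii)] Shelah's isomorphism theorem then gives common ultrapowers $\mathcal A_{n+1}\cong\mathcal B_{n+1}$ of the two expanded structures.
\end{itemize}
Iterating ultrapowers preserves decomposability, again by Theorem~\ref{thm: fubini}, and Lemma~\ref{lem: Łos} keeps the second-order invariant. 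The limits $\mathcal A_\omega,\mathcal B_\omega$ then carry the union of the compatible isomorphisms $f_n$, and each decomposable relation of a limit appears at some finite stage $m$, as in the proof of Lemma~\ref{lem: Łos-limit}.

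\textbf{Main obstacle.} The hard part is compatibility: the isomorphisms $f_n$ obtained from Shelah's theorem at different stages need not cohere, so their union need not be a well-defined map. First I would handle only finitely many relations per stage and choose the ultrafilters at stage $n+1$ as ultrapowers of the previous isomorphism, i.e.\ take $f_{n+1}$ extending ${}^{I}f_n/\mathcal F$ on the diagonal copy. This requires Shelah's theorem relative to an already given isomorphism, which is essentially a saturation or back-and-forth argument in a sufficiently saturated ultrapower. If that fails, I would instead take the $\mathcal A_n,\mathcal B_n$ to be saturated of the same cardinality at each stage and build $f$ directly by back and forth between the limits.

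The second delicate point is that "all decomposable relations" is a proper set larger than $\omega$, so an $\omega$-chain can only handle all of them if each stage's ultrapower absorbs all relations of the previous stage simultaneously. I would therefore apply the type-realization step in (i)–(ii) to the whole $\kappa$-tuple of decomposable relations of $\mathcal A_n$ at once, using $\kappa$-2-types and Lemma~\ref{lem: ult-lezár}, rather than to finite tuples.
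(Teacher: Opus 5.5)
\paragraph{Where the proposal stands.} Your direction (1)$\Rightarrow$(2) is the paper's argument. Your skeleton for (2)$\Rightarrow$(1) also matches the paper: Claim~\ref{claim: t} plus the zig-zag. That means realizing finite fragments of the $SO$-theory of an expansion by decomposable relations on the other side, and taking the ultraproduct over the finite fragments. Theorem~\ref{thm: fubini} and the \L{}o\'s-type lemma then keep decomposability and the second-order invariant, Keisler--Shelah supplies the isomorphic ultrapowers, and the two sides alternate.

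\paragraph{The gap.} It is the step you yourself call the main obstacle: you never make the stagewise isomorphisms cohere, and neither remedy you offer does it.

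Your back-and-forth fallback fails. An element-by-element back-and-forth between saturated limits controls where elements go. It does not control whether $f^{*}$ carries $\Upsilon_{\mathrm{lim}}$ onto $\Upsilon^{0}_{\mathrm{lim}}$, which is a condition on a huge family of subsets of $A_{\omega}^{k}$, not on elements.

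Your other remedy, a Keisler--Shelah theorem relative to a previously given isomorphism, does not need saturation. It is just Keisler--Shelah applied to expansions in which the old elements are named. But then you must know that those expansions are elementarily equivalent. Your invariant, stated for finite tuples of relations, does not give that.

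\paragraph{The missing idea.} Singletons are decomposable: if $b=[(b_{j})_{j\in J}]$ then $\{b\}=\prod_{j\in J}\{b_{j}\}/\mathcal{G}'$, and likewise for the product ultrafilters at later stages. Suppose that at every stage you expand by the whole of $\Upsilon_{n}$, as you end up proposing for cardinality reasons. Suppose also that you keep every previously introduced relation symbol in the language of all later stages. Then every element is named by a unary symbol $R_{b}$, and any isomorphism of the expanded ultrapowers must send the interpretation of $R_{b}$ to that of $R_{b}$.

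This forces the maps to commute with the diagonal embeddings. In the paper's notation, $\delta=f_{n}\circ\delta'\circ g_{n}\circ\delta'$ on $\mathcal{B}_{n}$, and symmetrically on $\mathcal{A}_{n}$. Hence the maps $f_{n}\circ\delta'$ form an increasing chain whose union is a well-defined isomorphism $\mathcal{A}_{\omega}\to\mathcal{B}_{\omega}$.

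Each relation of $\Upsilon_{\mathrm{lim}}$ is named from some finite stage on, and its partner on the other side is decomposable by Theorem~\ref{thm: fubini}. So $f^{*}$ maps $\Upsilon_{\mathrm{lim}}$ into $\Upsilon^{0}_{\mathrm{lim}}$, and the $g_{n}$ give the converse.

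The invariant to carry is therefore equality of $\mathrm{Th}_{SO}$ of the decomposable-Henkin models expanded by all of $\Upsilon_{n}$, with earlier symbols retained, as in Claim~\ref{claim: t}. With that in place, coherence is automatic, and neither saturation nor a relative isomorphism theorem is needed.
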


\begin{proof}
   
Suppose that $\{\mathcal{D}_{i} : i\in I\}$ and $\{\mathcal{E}_{j} : j\in J\}$ are completely inseparable from each other. That is, there exist ultraproducts
\[
\mathcal{A}_{0}=\prod_{i\in I}\mathcal{D}_{i}/\mathcal{G}
\qquad\text{and}\qquad
\mathcal{B}_{0}=\prod_{j\in J}\mathcal{E}_{j}/\mathcal{G}',
\]
and $\omega$-long ultrachains $\langle \mathcal{A}_{n}:n\in\omega\rangle$, $\langle \mathcal{B}_{n}:n\in\omega\rangle$ starting from them with limits $\mathcal{A}_{\omega}$ and $\mathcal{B}_{\omega}$, together with an isomorphism
\[
f:\mathcal{A}_{\omega}\longrightarrow\mathcal{B}_{\omega},
\]
such that $f$ induces a bijection between the decomposable relations of $\mathcal{A}_{\omega}$ and those of $\mathcal{B}_{\omega}$. We claim that the ultrafilters ${\cal G}$  and ${\cal G}'$  satisfy
\[
\{\phi\in SO\text{ : }\{i\in I\text{ : }{\cal D}_{i}\models \phi\}\in {\cal G}\}
=
\{\psi\in SO\text{ : }\{j\in J\text{ : }{\cal E}_{j}\models \psi\}\in {\cal G'}\}.
\]
Consider the limit–Henkin model $\big({\cal A}_{\omega},\Upsilon_{\mathrm{lim}}\big)$ formed from the family $\{ {\cal D}_{i}\colon i\in I\}$ by the ultrachain $\langle {\cal A}_{n} : n\in\omega\rangle$, and the limit–Henkin model $\big({\cal B}_{\omega},\Upsilon^{0}_{\mathrm{lim}}\big)$ formed from the family $\{ {\cal E}_{j}\colon j\in J\}$ by the ultrachain $\langle {\cal B}_{n} : n\in\omega\rangle$. By our hypothesis it follows that
\[
\mathrm{Th}_{SO}(\big({\cal A}_{\omega},\Upsilon_{\mathrm{lim}}\big))
=\mathrm{Th}_{SO}(\big({\cal B}_{\omega},\Upsilon^{0}_{\mathrm{lim}}\big)).
\]
Moreover, by Lemma~\ref{lem: Łos-limit} we have
\[
\{\phi\in SO\text{ : }\{i\in I\text{ : }{\cal D}_{i}\models \phi\}\in {\cal G}\}
=\mathrm{Th}_{SO}(\big({\cal A}_{\omega},\Upsilon_{\mathrm{lim}}\big))
\]
and
\[
\{\psi\in SO\text{ : }\{j\in J\text{ : }{\cal E}_{j}\models \psi\}\in {\cal G'}\}
=\mathrm{Th}_{SO}(\big({\cal B}_{\omega},\Upsilon^{0}_{\mathrm{lim}}\big)).
\]
Therefore the desired equality holds, and (2) is satisfied.

Now suppose that there exist ultrafilters ${\cal G}$ over $I$ and ${\cal G'}$ over $J$ such that
\[
\{\phi\in SO\ :\ \{i\in I : {\cal D}_{i}\models \phi\}\in {\cal G}\}
=
\{\psi\in SO\ :\ \{j\in J : {\cal E}_{j}\models \psi\}\in {\cal G'}\}.
\]
Let
\[
\mathcal{A}_{0}=\prod_{i\in I}\mathcal{D}_{i}/{\cal G}
\qquad\text{and}\qquad
\mathcal{B}_{0}=\prod_{j\in J}\mathcal{E}_{j}/{\cal G'}.
\]
Consider the  decomposable-Henkin models $\big(\mathcal{A}_{0},\Upsilon_{0}\big),$ and $\big(\mathcal{B}_{0},\Upsilon^{0}_{0}\big)$   formed from the families $\{ \mathcal{D}_{i}\text{ : }i\in I\}$ and $\{ \mathcal{E}_{j}\text{ : }j\in J\}$ by the ultrafilters $\cal G$ and $\cal G'$. By Lemma~\ref{lem: Łos} $\mathrm{Th}_{SO}\big({\cal A}_{0}, \Upsilon_{0}\big)=\mathrm{Th}_{SO}\big({{{{{\cal B}}}}_{0}}, \Upsilon^{0}_{0}\big)$. We will recursively construct $\omega$-long ultrachains $\langle \mathcal{A}_{n}:n\in\omega\rangle$ and $\langle \mathcal{B}_{n}:n\in\omega\rangle$ starting from $\mathcal{A}_{0}$ and $\mathcal{B}_{0}$, respectively, so that there is an isomorphism between their limits which induces a bijection between their decomposable relations.
First we  need the following technical claim.
\begin{claim}\label{claim: t}
   Let $\big(\mathcal{M},\Upsilon\big)$ and $\big(\mathcal{N},\Upsilon^{0}\big)$ be two decomposable–Henkin models formed from the families $\{\mathcal{M}_{\ell}:\ell\in L\}$ and $\{\mathcal{N}_{s}:s\in S\}$ by the ultrafilters $\mathcal{L}$ and $\mathcal{S}$, respectively, let $\Check{\mathcal{M}}=\big\langle\mathcal{M},R\big\rangle_{R\in\Upsilon}$ and suppose that
\[
\mathrm{Th}_{SO}\big(\mathcal{M},\Upsilon\big)=\mathrm{Th}_{SO}\big(\mathcal{N},\Upsilon^{0}\big).
\]

Then there exist a set $\Lambda$ and an ultrafilter ${\cal S'}$ over $\Lambda$, such that for every $\lambda\in\Lambda$ and $R\in\Upsilon$ there is a relation $R^{\lambda}\in\Upsilon^{0}$, and there exist ultraproducts
\[
\Check{\mathcal{M}}'={}^{L'}\Check{\mathcal{M}}/{\mathcal{L}'}\qquad\text{and}\qquad
\Check{\mathcal{N}}'=\prod_{\lambda\in\Lambda}\big\langle\mathcal{N}',R^{\lambda}\big\rangle_{R\in\Upsilon}/{\cal S'},
\]
and an isomorphism
\[
h:\Check{\mathcal{M}}'\longrightarrow\Check{\mathcal{N}}',
\]
such that
\[
\mathrm{Th}_{SO}\big(\Check{\mathcal{M}}',\Upsilon'\big)
=\mathrm{Th}_{SO}\big(\Check{\mathcal{N}}',\Upsilon'^{0}\big).
\]

Where $\big(\mathcal{M}',\Upsilon'\big)$ and $\big(\mathcal{N}',\Upsilon'^{0}\big)$ are decomposable–Henkin models  formed from the families $\{\mathcal{M}_{\ell}:\ell\in L\}$ and $\{\mathcal{N}_{s}:s\in S\}$ by the ultrafilters $\mathcal{L}\times\mathcal{L}'$ and $\mathcal{S}\times\mathcal{S}'$, respectively, where $\mathcal{M}'$ and $\mathcal{N}'$ are the $\tau$-reducts of $\Check{\mathcal{M}}'$ and $\Check{\mathcal{N}}'$. Note that $(\Check{\mathcal{M}}',\Upsilon')$ and $(\Check{\mathcal{N}}',\Upsilon'^{0})$ are also decomposable–Henkin models formed from the families $\{\Check{\mathcal{M}}_{\ell}:\ell\in L\}$ and $\{\Check{\mathcal{N}}_{s}:s\in S\}$ by the ultrafilters $\mathcal{L}\times\mathcal{L}'$ and $\mathcal{S}\times\mathcal{S'}$, respectively, where for each $\ell\in L$ and $s\in S$ the structures  $\Check{\mathcal{M}}_{\ell}$ and $\Check{\mathcal{N}}_{s}$ are suitable expansions of ${\mathcal{M}}_{\ell}$ and ${\mathcal{N}}_{s}$, respectively.

\end{claim}\item  \emph{Proof.} Let $\big(\mathcal{M},\Upsilon\big)$ and $\big(\mathcal{N},\Upsilon^{0}\big)$ and $\Check{\mathcal{M}}$ be as in the above claim, and set 
\[
\Lambda'=[\mathrm{Th}_{SO}\big(\Check{\mathcal{M}},\Upsilon\big)]^{<\omega}.
\]
Since
\[
\mathrm{Th}_{SO}\big({\mathcal{M}},\Upsilon\big)=\mathrm{Th}_{SO}\big(\mathcal{N},\Upsilon^{0}\big),
\]
it follows that for every $\lambda\in\Lambda'$ and  $R\in\Upsilon$ there is a relation $R^{\lambda}\in\Upsilon^{0}$ such that
\[
\big(\langle {\cal N}, R^{\lambda}\rangle_{R\in\Upsilon},\Upsilon^{0}\big)\models\bigwedge\lambda.
\]
Let ${\cal S}''$ be an ultrafilter over $\Lambda'$ containing each set
\[
\lambda^{\partial}=\{\mu\in\Lambda'\ :\ \lambda\subseteq\mu\}\qquad(\lambda\in\Lambda').
\]
Put
\[
\Check{{\cal N}}''=\prod_{\lambda\in\Lambda'}\big\langle {\cal N}, R^{\lambda}\big\rangle_{R\in\Upsilon}/{\cal S}''.
\]
Then $\Check{{\cal N}}''$ and $\Check{{\cal M}}$ are elementarily equivalent (in the first-order sense), so there exist ultrapowers
\[
\Check{{\cal M}}'={}^{L'}\Check{{\cal M}}/{\cal L'}\quad\text{and}\quad
\Check{{\cal N}}'={}^{L'}\Check{{\cal N}}''/{\cal L'},
\]
and an isomorphism
\[
h:\Check{{\cal M}}'\longrightarrow\Check{{\cal N}}'
\] by \cite{Shelah}. Let $\mathcal{M}'$ and $\mathcal{N}'$ be the $\tau$-reducts of $\Check{\mathcal{M}}'$ and $\Check{\mathcal{N}}'$, respectively, and let $(\mathcal{M}',\Upsilon')$ and $(\mathcal{N}',\Upsilon'^{0})$ be the decomposable–Henkin models formed from the families $\{\mathcal{M}_{\ell}:\ell\in L\}$ and $\{\mathcal{N}_{s}:s\in S\}$ by the ultrafilters $\mathcal{L}\times\mathcal{L}'$ and $\mathcal{S}\times\mathcal{S'}$, respectively.

We claim that
$
\mathrm{Th}_{SO}\big(\Check{\mathcal{M}}',\Upsilon'\big)
=\mathrm{Th}_{SO}\big(\Check{\mathcal{N}}',\Upsilon'^{0}\big)
$ holds.
Indeed, suppose that
$
\phi\in\mathrm{Th}_{SO}\big(\Check{\mathcal{M}}',\Upsilon'\big),$
then, by Lemma~\ref{lem: Łos},
$
\phi\in\mathrm{Th}_{SO}\big(\Check{\mathcal{M}},\Upsilon\big),
$
hence
\[
\{\lambda\in\Lambda' : \phi\in\lambda\}\in {\cal S}'',
\]
and therefore
\[
\{\lambda\in\Lambda' : \big(\langle\mathcal{N}, R^{\lambda}\rangle_{R\in \Upsilon},\Upsilon^{0}\big)\models\phi\}\in{\cal S}'',
\]
again by Lemma~\ref{lem: Łos} it follows that
$
\phi\in\mathrm{Th}_{SO}\big(\Check{\mathcal{N}}',\Upsilon'^{0}\big).
$
This proves the claim.

\item 
Now we construct the ultrachains $\langle \mathcal{A}_{n}:n\in\omega\rangle$ and $\langle \mathcal{B}_{n}:n\in\omega\rangle$ starting from $\mathcal{A}_{0}$ and $\mathcal{B}_{0}$, respectively, and the auxiliary ultrachains $\langle \mathcal{A'}_{n}:n\in\omega\rangle$ and $\langle \mathcal{B'}_{n}:0<n<\omega\rangle$, as follows. (Throughout, if $\cal C$ is a structure then $\delta_{\cal C}$ or $\delta'_{\cal C}$ denotes the diagonal embedding of $\cal C$ into a suitable ultrapower; whenever convenient we may assume $\delta'_{\cal C}=\delta_{\cal C}=\mathrm{Id}$.)

\[\begin{tikzcd}[ampersand replacement=\&,cramped,column sep=small,row sep=scriptsize]
	{\rotatebox{90}{\dots}} \&\&\&\& {\rotatebox{90}{\dots}} \\
	{{\cal A}_{2}} \&\& {{\cal B'}_{2}} \&\& {{\cal B}_{2}} \\
	\&\& {{\cal A'}_{1}} \\
	{{\cal A}_{1}} \&\& {{\cal B'}_{1}} \&\& {{\cal B}_{1}} \\
	\&\& {{\cal A'}_{0}} \\
	{{\cal A}_{0}} \&\&\&\& {{\cal B}_{0}}
	\arrow[from=2-1, to=1-1]
	\arrow["{g_{2}}"', from=2-3, to=2-1]
	\arrow[from=2-5, to=1-5]
	\arrow["{\delta'_{{\cal B}_{2}}}"', from=2-5, to=2-3]
	\arrow["{f_{1}}", from=3-3, to=2-5]
	\arrow["{\delta_{{\cal A}_{1}}}", from=4-1, to=2-1]
	\arrow["{\delta'_{{\cal A}_{1}}}", from=4-1, to=3-3]
	\arrow["{g_{1}}"', from=4-3, to=4-1]
	\arrow["{\delta_{{\cal B}_{1}}}"', from=4-5, to=2-5]
	\arrow["{\delta'_{{\cal B}_{1}}}"', from=4-5, to=4-3]
	\arrow["{f_{0}}", from=5-3, to=4-5]
	\arrow["{\delta_{{\cal A}_{0}}}", from=6-1, to=4-1]
	\arrow["{\delta'_{{\cal A}_{0}}}", from=6-1, to=5-3]
	\arrow["{\delta_{{\cal B}_{0}}}"', from=6-5, to=4-5]
\end{tikzcd}\]

\begin{itemize}
\item [(1a)]  Let $\Check{{\cal A}}_{0}=\big\langle{\cal A}_{0},R\big\rangle_{R\in \Upsilon}.$
\item[(1b)] Let $\Check{\cal B}_{0}={\cal B}_{0}.$
  \item[(2a)] For every $n\in\omega$ let $\Check{\Check{{\cal A}}}_{n+1}={}^{I_{n}}\Check{{\cal A}}_{n}/{\cal F}_{n}$ be an ultrapower, and let $\big( {\cal A}_{n+1},\Upsilon_{n+1}\big)$ be the decomposable–Henkin model formed from the family $\{ {\cal D}_{i}\colon i\in I\}$ by the ultrafilter ${\cal G}\times{\cal F}_{1}\times\cdots\times{\cal F}_{n}$, where ${\cal A}_{n+1}$ is the $\tau$ reduct of $\Check{\Check{{\cal A}}}_{n+1}$.
  \item[(2b)] For every $n\in\omega$ let $\Check{\Check{{\cal B}}}_{n+1}={}^{J_{n}}\Check{{\cal B}}_{n}/{\cal H}_{n}$ be an ultrapower, and let $\big( {\cal B}_{n+1},\Upsilon^{0}_{n+1}\big)$ be the decomposable–Henkin model formed from the family $\{ {\cal E}_{j}\colon j\in J\}$ by the ultrafilter ${\cal G'}\times{\cal H}_{1}\times\cdots\times{\cal H}_{n}$, where ${\cal B}_{n+1}$ is the $\tau$ reduct of $\Check{\Check{{\cal B}}}_{n+1}$.
 
  \item[(3a)] For every $n\in\omega$ let $\Check{{\cal A'}}_{n}={}^{I'_{n}}\Check{{\cal A}}_{n}/{\cal F'}_{n}$ be an ultrapower and let $\big( {\cal A'}_{n},\Upsilon'_{n}\big)$ be the decomposable–Henkin model formed from the family $\{ {\cal D}_{i}\colon i\in I\}$ by the ultrafilter ${\cal G}\times{\cal F}_{1}\times\cdots\times{\cal F}_{n-1}\times{\cal F'}_{n}$, where ${\cal A'}_{n} $ is the $\tau$ reduct of $\Check{{\cal A'}}_{n}$ and put ${\cal A}_{n+1}={}^{I''_{n}}{\cal A'}_{n}/{\cal F''}_{n}$ so that ${\cal F}_{n}={\cal F'}_{n}\times {\cal F''}_{n}$.
  \item[(3b)] For every $0<n<\omega$ let $\Check{{\cal B'}}_{n}={}^{J'_{n}}\Check{{\cal B}}_{n}/{\cal H'}_{n}$ be an ultrapower and let $\big( {\cal B'}_{n},\Upsilon'^{0}_{n}\big)$ be the decomposable–Henkin model formed from the family $\{ {\cal E}_{j}\colon j\in J\}$ by the ultrafilter ${\cal G'}\times{\cal H}_{1}\times\cdots\times{\cal H}_{n-1}\times{\cal H'}_{n}$, where ${{\cal B'}}_{n}$ is the $\tau$ reduct of $\Check{{\cal B'}}_{n}$ and put ${\cal B}_{n+1}={}^{J''_{n}}{\cal B'}_{n}/{\cal H''}_{n}$ so that ${\cal H}_{n}={\cal H'}_{n}\times {\cal H''}_{n}$. 
  \item[(4a)] For each $n\in\omega$ let
$
f_{n}: \Check{\cal A'}_{n}\longrightarrow \Check{\Check{\Check{{\cal B}}}}_{n+1}
$
be an isomorphism, where $\Check{\Check{\Check{{\cal B}}}}_{n+1}$ is a suitable extension of ${\Check{\Check{{\cal B}}}}_{n+1}$, with relations from $\Upsilon'_{n+1}$.
\item[(4b)] For  $0<n\in\omega$ let
$
g_{n}: \Check{\cal B'}_{n}\longrightarrow \Check{\Check{\Check{{\cal A}}}}_{n}$
be a suitable isomorphism, where $\Check{\Check{\Check{{\cal A}}}}_{n}$ is an extension of ${\Check{\Check{{\cal A}}}}_{n}$ with relations from $\Upsilon_{n}$.

\item[(5a)] For every $0<n<\omega$ $
\mathrm{Th}_{SO}\big({\Check{\Check{\Check{{\cal B}}}}_{n}}, \Upsilon^{0}_{n}\big)
=\mathrm{Th}_{SO}\big({{{\Check{{\cal A'}}}}_{n-1}}, \Upsilon'_{n-1}\big).
$
  \item[(5b)] For every $n\in\omega$ $
\mathrm{Th}_{SO}\big({\Check{\Check{\Check{{\cal A}}}}_{n}}, \Upsilon_{n}\big)
=\mathrm{Th}_{SO}\big({{{\Check{{\cal B'}}}}_{n}}, \Upsilon^{0'}_{n}\big).$

   \item[(6a)] For $0<n<\omega$ let   $\Check{{\cal A}}_{n}=\big\langle\Check{\Check{\Check{{\cal A}}}}_{n},R\big\rangle_{R\in \Upsilon_{n}},$ so that the newly added relations are denoted by relation symbols distinct from those introduced so far.
  \item[(6b)] For $0<n<\omega$ let   $\Check{{\cal B}}_{n}=\big\langle\Check{\Check{\Check{{\cal B}}}}_{n},S\big\rangle_{S\in \Upsilon^{0}_{n}},$ so that the newly added relations are denoted by relation symbols distinct from those introduced so far.

\end{itemize}
First, by our assumptions,
$
\mathrm{Th}_{SO}\big({\cal A}_{0}, \Upsilon_{0}\big)
=\mathrm{Th}_{SO}\big({{{{{\cal B}}}}_{0}}, \Upsilon^{0}_{0}\big)$
and let $\Check{{\cal A}}_{0}=\big\langle{\cal A}_{0},R\big\rangle_{R\in \Upsilon}$.  Then, by Claim~\ref{claim: t}, there exist a set $J_{0}$ and an ultrafilter ${\cal H}_{0}$ on $J_{0}$ such that for every $j\in J_{0}$ and  $R\in\Upsilon_{0}$ there is a relation $R^{j}\in\Upsilon_{0}^{0}$, and there exist ultraproducts
\[
\Check{\mathcal{A'}}_{0}={}^{I'_{0}}\Check{\mathcal{A}}_{0}/{\cal F'}_{0},\qquad
\Check{\Check{\Check{{\cal B}}}}_{1}
=\prod_{j\in J_{0}}\big\langle{\mathcal{B}}_{0},R^{j}\big\rangle_{R\in\Upsilon_{0}}/{\cal H}_{0},
\]
and an isomorphism
\[
f_{0}:\Check{\mathcal{A'}}_{0}\longrightarrow\Check{\Check{\Check{{\cal B}}}}_{1},
\]
such that
$$
\mathrm{Th}_{SO}\big(\Check{\mathcal{A'}}_{0},\Upsilon'_{0}\big)
=\mathrm{Th}_{SO}\big(\Check{\Check{\Check{{\cal B}}}}_{1},\Upsilon^{0}_{1}\big).$$
Thus, the above conditions are all satisfied for $f_{0}$ and for the newly defined structures.

From this point onward, the construction is symmetric with respect to the ultrachains $\langle \mathcal{A}_{n}:n\in\omega\rangle$ and $\langle \mathcal{B}_{n}:n\in\omega\rangle$.  Thus, for the general step, suppose that we have already defined $\Check{\cal B}_{n}$, ${\Check{\Check{\Check{{\cal A}}}}_{n}}$, $\Check{\cal B'}_{n}$, $g_{n}$, as well as $\Upsilon_{n}$, $\Upsilon^{0}_{n}$ and $\Upsilon'^{0}_{n}$, so that the conditions (1a)–(6a) and (1b)–(6b) hold for these and for the previously defined objects.  In particular assume
$
\mathrm{Th}_{SO}\big({\Check{\Check{\Check{{\cal A}}}}_{n}}, \Upsilon_{n}\big)
=\mathrm{Th}_{SO}\big({\Check{{\cal B'}}}_{n}, \Upsilon'^{0}_{n}\big).
$\[\begin{tikzcd}[ampersand replacement=\&,cramped]
	{{{{\Check{{\cal A}}}}'_{n}}} \& {{\Check{\Check{\Check{{\cal B}}}}_{n+1}}} \\
	{{\Check{\Check{\Check{{\cal A}}}}_{n}}} \& {{{{\Check{{\cal B}}}}'_{n}}} \\
	\& {{{{\Check{{\cal B}}}}_{n}}}
	\arrow["{f_{n}}", dashed, from=1-1, to=1-2]
	\arrow["{\delta'_{\Check{{\cal A}}_{n}}}", dashed, from=2-1, to=1-1]
	\arrow["{\delta_{\Check{{\cal B}}'_{n}}}"', dashed, from=2-2, to=1-2]
	\arrow["{g_{n}}", from=2-2, to=2-1]
	\arrow["{\delta'_{\Check{{\cal B}}_{n}}}"', from=3-2, to=2-2]
\end{tikzcd}\]
Then by Claim~\ref{claim: t}, there exist a set $J''_{n}$ and an ultrafilter ${\cal H''}_{n}$ on $J''_{n}$ such that for every $j\in J''_{n}$ and every $R\in\Upsilon_{n}$ there is a relation $R^{j}\in\Upsilon_{n}^{0'}$, and there exist ultraproducts
\[
\Check{\mathcal{A'}}_{n}={}^{I'_{n}}\Check{\mathcal{A}}_{n}/{\mathcal{F'}_{n}},\qquad \Check{\Check{\Check{{\cal B}}}}_{n+1}
=\prod_{j\in J''_{n}}\big\langle\Check{\mathcal{B'}}_{n},R^{j}\big\rangle_{R\in\Upsilon_{n}}/{\cal H''}_{n}
\]
and an isomorphism
\[
f_{n}:\Check{\mathcal{A'}}_{n}\longrightarrow\Check{\Check{\Check{{\cal B}}}}_{n+1},
\]
such that
\[
\mathrm{Th}_{SO}\big(\Check{\Check{\Check{{\cal B}}}}_{n+1}, \Upsilon^{0}_{n+1}\big)
=\mathrm{Th}_{SO}\big(\Check{\mathcal{A'}}_{n}, \Upsilon'_{n}\big),
\]
and the conditions (1a)–(6a) and (1b)–(6b) continue to hold.

We claim that the following two diagrams commute for every $0<n<\omega$ and every $m\in\omega$, i.e.
$
\delta_{\Check{{\cal B}}_{n}}
= f_{n}\circ \delta'_{\Check{\Check{\Check{{\cal A}}}}_{n}}\circ g_{n} \circ \delta'_{\Check{{\cal B}}_{n}},
$
and
$
\delta_{\Check{{\cal A}}_{m}}
= g_{m+1}\circ \delta'_{\Check{\Check{\Check{{\cal B}}}}_{m+1}}\circ f_{m} \circ \delta'_{\Check{{\cal A}}_{n}}.
$

\[\begin{tikzcd}[ampersand replacement=\&,cramped,column sep=scriptsize,row sep=small]
	\&\& {{\Check{\Check{\Check{{\cal B}}}}_{n+1}}} \&\&\& {{\Check{\Check{\Check{{\cal A}}}}_{m+1}}} \& {{\Check{{\cal B'}}_{m+1}}} \& {{\Check{\Check{\Check{{\cal B}}}}_{m+1}}} \\
	\& {{\Check{{\cal A'}}_{n}}} \&\&\&\&\& {{\Check{{\cal A'}}_{m}}} \\
	{{\Check{\Check{\Check{{\cal A}}}}_{n}}} \& {\Check{{\cal B'}}_{n}} \& {\Check{{\cal B}}_{n}} \&\&\& {{\Check{{\cal A}}_{m}}}
	\arrow["{g_{m+1}}"', from=1-7, to=1-6]
	\arrow["{\delta'_{{\Check{\Check{\Check{{\cal B}}}}_{m+1}}}}"', from=1-8, to=1-7]
	\arrow["{f_{n}}", from=2-2, to=1-3]
	\arrow["{f_{m}}"', from=2-7, to=1-8]
	\arrow["{\delta'_{{\Check{\Check{\Check{{\cal A}}}}_{n}}}}", from=3-1, to=2-2]
	\arrow["{g_{n}}", from=3-2, to=3-1]
	\arrow["{\delta_{{\Check{{\cal B}}_{n}}}}"', from=3-3, to=1-3]
	\arrow["{\delta'_{{\Check{{\cal B}}_{n}}}}", from=3-3, to=3-2]
	\arrow["{\delta_{{\Check{{\cal A}}_{m}}}}", from=3-6, to=1-6]
	\arrow["{\delta'_{{\Check{{\cal A}}_{m}}}}"', from=3-6, to=2-7]
\end{tikzcd}\]
Our claim is symmetric, so it suffices to prove
$
\delta_{\Check{{\cal B}}_{n}}
= f_{n}\circ \delta'_{\Check{\Check{\Check{{\cal A}}}}_{n}}\circ g_{n} \circ \delta'_{\Check{{\cal B}}_{n}}.$
To see this, note that every singleton relation $R_{b}=\{b\}$ (for $b\in B_{n}$) belongs to $\Upsilon^{0}_{n}$, hence $R_{b}$ appears in ${\Check{{\cal B}}_{n}}$. Therefore
\[
\delta_{\Check{{\cal B}}_{n}}(b)
= \delta^{*}_{\Check{{\cal B}}_{n}}(R_{b}) 
= f_{n}^{*}\big(\delta'^{*}_{\Check{\Check{\Check{{\cal A}}}}_{n}}\big(g_{n}^{*}\big(\delta'^{*}_{\Check{{\cal B}}_{n}}(R_{b})\big)\big)\big)
= f_{n}\big(\delta'_{\Check{\Check{\Check{{\cal A}}}}_{n}}\big(g_{n}\big(\delta'_{\Check{{\cal B}}_{n}}(b)\big)\big)\big),
\]
as required and hence we obtain that for every $n\in\omega$,
$f_{n}\circ \delta'_{{\cal A}_{n}}\subseteq f_{n+1}\circ \delta'_{{\cal A}_{n+1}} .$

Let
\[
f=\bigcup_{n\in\omega}f_{n}\circ \delta'_{{\cal A}_{n}}.
\]
Then it is easy to check that
\[
f:\mathcal{A}_{\omega}\longrightarrow\mathcal{B}_{\omega}
\]
is an isomorphism which induces a bijection between the decomposable relations of $\mathcal{A}_{\omega}$ and  $\mathcal{B}_{\omega}$ (where $\mathcal{A}_{\omega}$ and $\mathcal{B}_{\omega}$ are the limits of the ultrachains $\langle \mathcal{A}_{n}:n\in\omega\rangle$ and $\langle \mathcal{B}_{n}:n\in\omega\rangle$, respectively). This completes the proof of the Lemma.
\end{proof}
As a corollary of the previous theorem, we obtain the following characterization of structures that are equivalent in second-order logic.

\begin{theorem}\label{thm: eqv}
Let ${\cal A}$ and ${\cal B}$ be two $\tau$-structures. The following are equivalent:
\begin{itemize}
\item[(1)] ${\cal A}\equiv_{SO}{\cal B}$, i.e.\ the same second-order formulas hold in both structures;
\item[(2)]  $\{\cal A\}$ and $\{\cal B\}$ are completely inseparable from each other.
\end{itemize}
\end{theorem}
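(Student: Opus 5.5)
The plan is to derive the theorem directly from Lemma~\ref{lem: főlemma}, applied to the one-element index sets $I=\{0\}$ and $J=\{0\}$ with $\mathcal{D}_{0}=\mathcal{A}$ and $\mathcal{E}_{0}=\mathcal{B}$. The only work is to identify condition (2) of that lemma with second-order equivalence in this degenerate case.

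The key observation is that over a singleton index set there is exactly one ultrafilter, namely the principal ultrafilter $\{\{0\}\}$. Hence the existential quantifier over ultrafilters $\mathcal{G}$, $\mathcal{G}'$ in Lemma~\ref{lem: főlemma}(2) is trivially witnessed and imposes no constraint. Moreover, for every $\phi\in SO$,
\[
\{i\in\{0\}:\mathcal{A}\models\phi\}\in\{\{0\}\}\iff \mathcal{A}\models\phi .
\]
So the left-hand set in Lemma~\ref{lem: főlemma}(2) is exactly $\mathrm{Th}_{SO}(\mathcal{A})$, and likewise the right-hand set is $\mathrm{Th}_{SO}(\mathcal{B})$. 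Condition (2) of the lemma therefore reads $\mathrm{Th}_{SO}(\mathcal{A})=\mathrm{Th}_{SO}(\mathcal{B})$, i.e.\ $\mathcal{A}\equiv_{SO}\mathcal{B}$. Since the theory $\mathrm{Th}_{SO}$ consists of sentences, closed under negation, equality of these sets is the same as agreeing on all second-order sentences, which is the meaning of (1).

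Condition (1) of Lemma~\ref{lem: főlemma} is verbatim condition (2) of the theorem, because complete inseparability is defined for arbitrary sets of structures and $\{\mathcal{A}\}$, $\{\mathcal{B}\}$ are such sets. Chaining the two identifications gives (1)$\iff$(2).

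I do not expect a real obstacle here; all the substance sits in Lemma~\ref{lem: főlemma}. The one point needing care is that the lemma's proof remains valid when $\mathcal{G}$ is principal. In that case $\mathcal{A}_{0}\cong\mathcal{A}$, and every relation on $\mathcal{A}_{0}$ is decomposable, so $\Upsilon_{0}$ is the full power set and the decomposable–Henkin model is just standard second-order semantics. This is consistent with Lemma~\ref{lem: Łos}, and the ultrachain construction only ever uses products of ultrafilters starting from $\mathcal{G}$, so nothing in the argument relies on non-principality.
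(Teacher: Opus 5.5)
Your proposal is correct and is exactly the paper's argument: the paper also obtains the theorem by specialising the lemma equating complete inseparability with the existence of ultrafilters $\mathcal{G},\mathcal{G}'$ giving equal limit $SO$-theories to the singleton families $\{\mathcal{A}\}$ and $\{\mathcal{B}\}$. Since the only ultrafilter on a singleton is the principal one, that condition collapses to $\mathrm{Th}_{SO}(\mathcal{A})=\mathrm{Th}_{SO}(\mathcal{B})$. Your extra remark that nothing in the lemma's proof relies on non-principality is a reasonable sanity check that the paper leaves implicit.
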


\begin{proof}
This follows from Lemma~\ref{lem: főlemma} and the elementary fact that the only ultrafilter on a singleton set is trivial.
\end{proof}

The previous theorem may be regarded as an analogue of Frayne's theorem, where he showed that two structures are elementarily equivalent (in first-order logic) iff they have isomorphic $\omega$-long ultrachains \cite{Frayne}. As mentioned above, Keisler and Shelah later substantially simplified this characterization: two structures are elementarily equivalent iff they have isomorphic ultrapowers \cite{Shelah}. It is natural to ask whether the characterization given in Theorem~\ref{thm: eqv} admits a similarly simple form.

\begin{problem}
Is it true that ${\cal A}\equiv_{SO}{\cal B}$  if and only if there exist ultrapowers ${\cal A}'={}^{I}{\cal A}/{\cal F}$ and ${\cal B}'={}^{I}{\cal B}/{\cal F}$ and an isomorphism
\[
f:{\cal A}'\longrightarrow {\cal B}'
\]
such that $f$ induces a bijection between the decomposable relations of ${\cal A}'$ and those of ${\cal B}'$?
\end{problem}

Now we turn to the characterization of classes axiomatizable in second-order logic.  First, consider the following definition.

\begin{definition}\label{def: zárt-teljesinszep}Let $K$ be a class of $\tau$-structures.
\begin{itemize}
\item[(i)] We say that $K$ is \emph{closed under semi-complete inseparability} if for every set $\{ {\cal A}_{i} : i\in I\}\subseteq K$ and  $\tau$-structure ${\cal B}$, whenever $\{ {\cal A}_{i} : i\in I\}$ and $\{ {\cal B}\}$ are completely inseparable from each other, then ${\cal B}\in K$.
\item[(ii)] We say that $K$ is \emph{closed under complete inseparability} if for every sets $\{ {\cal A}_{i} : i\in I\}\subseteq K$ and  $\{ {\cal B}_{j} : j\in J\}$ of $\tau$-structures, whenever $\{ {\cal A}_{i} : i\in I\}$ and $\{ {\cal B}_{j} : j\in J\}$ are completely inseparable from each other, then $$\{ {\cal B}_{j} : j\in J\}\cap K\neq\emptyset.$$
\end{itemize}

\end{definition}
Note that the previous definitions are purely algebraic.  As a conclusion of this section we give a purely algebraic characterization of those classes that are definable by second-order formulas, and of those classes that are finitely definable by second-order formulas.
\begin{theorem}\label{thm: axi so}
    Let $K$ be a class of $\tau$-structures. Then $K$ is definable by second-order formulas (that is, there exists $\Gamma\subseteq SO$ such that $\mathrm{Mod}(\Gamma)=K$) if and only if $K$ is closed under semi-complete inseparability.

\end{theorem}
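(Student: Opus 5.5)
The plan is to prove both directions through Lemma~\ref{lem: főlemma}, which turns complete inseparability of $\{{\cal A}_i : i\in I\}$ and $\{{\cal B}\}$ into a statement about second-order theories. Since the only ultrafilter on a singleton is trivial, condition (2) of that lemma says that there is an ultrafilter ${\cal G}$ on $I$ such that
\[
\{\phi\in SO : \{i\in I : {\cal A}_i\models\phi\}\in{\cal G}\}=\mathrm{Th}_{SO}({\cal B}).
\]

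\emph{Necessity.} Assume $K=\mathrm{Mod}(\Gamma)$ with $\Gamma\subseteq SO$. Let $\{{\cal A}_i : i\in I\}\subseteq K$ and ${\cal B}$ be completely inseparable from each other, and take ${\cal G}$ as above from Lemma~\ref{lem: főlemma}. For each $\gamma\in\Gamma$ the set $\{i : {\cal A}_i\models\gamma\}$ equals $I$, so it belongs to ${\cal G}$. Hence $\gamma\in\mathrm{Th}_{SO}({\cal B})$ for every $\gamma\in\Gamma$, and so ${\cal B}\in K$. This direction is routine.

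\emph{Sufficiency.} Assume $K$ is closed under semi-complete inseparability, and let $\Gamma=\mathrm{Th}_{SO}(K)=\{\phi\in SO : K\models\phi\}$. Clearly $K\subseteq\mathrm{Mod}(\Gamma)$. For the converse, take ${\cal B}\models\Gamma$ and put $T=\mathrm{Th}_{SO}({\cal B})$. The goal is to find a family in $K$ and an ultrafilter realising $T$ as the ultrafilter-limit theory.

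\begin{itemize}
\item Every finite $\lambda\in[T]^{<\omega}$ has a model ${\cal A}_\lambda\in K$. Otherwise $\neg\bigwedge\lambda$ is true in all of $K$, so it lies in $\Gamma$ and therefore holds in ${\cal B}$, which is absurd.
\item Let $I=[T]^{<\omega}$ and let ${\cal G}$ be an ultrafilter on $I$ containing all the sets $\lambda^{\partial}$, exactly as in the proof of Lemma~\ref{lem: zárthenk}.
\item For $\phi\in T$ we have $\{\lambda : {\cal A}_\lambda\models\phi\}\supseteq\{\phi\}^{\partial}\in{\cal G}$.
\item For $\phi\notin T$ we have $\neg\phi\in T$, since $T$ is the complete theory of a single structure. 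The same argument then shows that the set of indices where $\phi$ holds is not in ${\cal G}$.
\end{itemize}
Thus the limit theory of $\{{\cal A}_\lambda\}$ along ${\cal G}$ equals $T=\mathrm{Th}_{SO}({\cal B})$. By Lemma~\ref{lem: főlemma}, $\{{\cal A}_\lambda : \lambda\in I\}$ and $\{{\cal B}\}$ are completely inseparable from each other, and closure under semi-complete inseparability gives ${\cal B}\in K$.

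The main obstacle is not this argument but the heavy lifting inside Lemma~\ref{lem: főlemma}, which we take as given. The only point needing care here is that completeness of $T$ (negation closure) is what makes the ultrafilter-limit theory equal to $T$ rather than merely contain it. A minor technicality is that $I=[T]^{<\omega}$ is a set, which is fine since $SO(\tau)$ is a set.
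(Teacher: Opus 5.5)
Your proof is correct and follows the paper's argument essentially verbatim. For necessity, you read off $\Gamma$ from the ultrafilter-limit theory supplied by the characterisation of complete inseparability. For sufficiency, you take $\Gamma$ to be the $SO$-theory of $K$, choose $\mathcal{A}_\lambda\in K$ satisfying each finite $\lambda\subseteq\mathrm{Th}_{SO}(\mathcal{B})$, and use an ultrafilter on $[\mathrm{Th}_{SO}(\mathcal{B})]^{<\omega}$ containing all $\lambda^{\partial}$. Your explicit appeal to the completeness of $\mathrm{Th}_{SO}(\mathcal{B})$, which gives equality rather than mere inclusion of the limit theory, fills in a step the paper leaves implicit.
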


\begin{proof}
    First suppose that there exists $\Gamma\subseteq SO$ such that $\mathrm{Mod}(\Gamma)=K$.  Let $\{ {\cal A}_{i} : i\in I\}\subseteq K$ and  ${\cal B}$ be any $\tau$-structure such that $\{ {\cal A}_{i} : i\in I\}$ and $\{ {\cal B}\}$ are completely inseparable from each other. Then, by Lemma~\ref{lem: főlemma}, there is an ultrafilter $\cal G$ over $I$ such that
$$
\mathrm{Th}_{SO}({\cal B})=\{\phi\in SO\text{ : }\{i\in I\text{ : } {\cal A}_{i}\models\phi\}\in {\cal G}\},
$$
and since $\Gamma \subseteq \{\phi\in SO\text{ : }\{i\in I \text{ : } {\cal A}_{i}\models\phi\}\in {\cal G}\}$ we obtain ${\cal B}\in \mathrm{Mod}(\Gamma)$, hence ${\cal B}\in K$. Thus $K$ is closed under semi-complete inseparability.

Now suppose that $K$ is closed under semi-complete inseparability. Let
$$
\Lambda=\{\phi\in SO\text{ : }K\models \phi\}.
$$
We claim that $\Lambda$ axiomatizes $K$. Indeed, let ${\cal B}\in \mathrm{Mod}(\Lambda)$. Let $\Sigma= [\mathrm{Th}_{SO}({\cal B})]^{<\omega},$ then for every $i\in\Sigma$ there exists ${\cal A}_{i}\in K$ such that $${\cal A}_{i}\models \bigwedge i$$ (otherwise $K\models \neg i$, so $\neg i\in \Lambda$, contradicting ${\cal B}\in \mathrm{Mod}(\Lambda)$). We claim that the family $\{ {\cal A}_{i}: i\in \Sigma\}$ and $\{ {\cal B}\}$ are completely inseparable. To see this, let $\cal G$ be an ultrafilter on $\Sigma$ containing the sets $i^{\partial}=\{j\in\Sigma \text{ : }\; i\subseteq j\}$ for $i\in\Sigma$. Then
$$
\mathrm{Th}_{SO}({\cal B})=\{\phi\in SO\text{ : }\{i\in \Sigma\text{ : } {\cal A}_{i}\models\phi\}\in {\cal G}\},
$$
and hence by Lemma~\ref{lem: főlemma} the family $\{ {\cal A}_{i}: i\in \Sigma \}$ and $\{ {\cal B}\}$ are completely inseparable. By our hypothesis, it follows that ${\cal B}\in K$. Therefore $K=\mathrm{Mod}(\Lambda)$, which completes the proof.
\end{proof}

\begin{theorem}\label{thm: axi finit so}
    Let $K$ be a class of $\tau$-structures. Then $K$ is definable by a second-order formula (that is, there exists $\phi\in SO$ such that $\mathrm{Mod}(\phi)=K$) if and only if $K$ is closed under complete inseparability.

\end{theorem}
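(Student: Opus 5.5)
The forward direction follows the pattern of the first half of the proof of Theorem~\ref{thm: axi so}. Suppose $K=\mathrm{Mod}(\phi)$ with $\phi\in SO$. Let $\{{\cal A}_{i}:i\in I\}\subseteq K$ and $\{{\cal B}_{j}:j\in J\}$ be completely inseparable from each other. By Lemma~\ref{lem: főlemma} there are ultrafilters ${\cal G}$ over $I$ and ${\cal G}'$ over $J$ for which the two ``ultralimit $SO$-theories'' coincide. Since every ${\cal A}_{i}$ satisfies $\phi$, we have $\{i: {\cal A}_{i}\models\phi\}=I\in{\cal G}$. Hence $\phi$ lies in the common theory, so $\{j\in J:{\cal B}_{j}\models\phi\}\in{\cal G}'$. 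In particular this set is nonempty, which gives $\{{\cal B}_{j}:j\in J\}\cap K\neq\emptyset$.

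For the converse, I would first note that closure under complete inseparability implies closure under semi-complete inseparability: take $J$ a singleton. So by Theorem~\ref{thm: axi so}, $K=\mathrm{Mod}(\Lambda)$ where $\Lambda=\{\phi\in SO: K\models\phi\}$. It remains to show that finitely many members of $\Lambda$ suffice. I would argue by contradiction, in the style of the (4)$\Rightarrow$(1) compactness argument. Assume no finite $\lambda\in[\Lambda]^{<\omega}$ axiomatizes $K$. Then for every such $\lambda$ there is a structure ${\cal B}_{\lambda}\in\mathrm{Mod}(\bigwedge\lambda)\setminus K$.

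Put $J=[\Lambda]^{<\omega}$ and take an ultrafilter ${\cal G}'$ on $J$ containing all the cones $\lambda^{\partial}$. Then the set
$$T=\{\psi\in SO:\{\lambda\in J:{\cal B}_{\lambda}\models\psi\}\in{\cal G}'\}$$
is a complete $SO$-theory (it decides every sentence, and each finite subset is satisfiable) which contains $\Lambda$.

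Next I would realize $T$ by an ultraproduct of members of $K$. Let $I=[T]^{<\omega}$. For each $\sigma\in I$ I need some ${\cal A}_{\sigma}\in K$ with ${\cal A}_{\sigma}\models\bigwedge\sigma$. If no such structure existed, then $K\models\neg\bigwedge\sigma$, so $\neg\bigwedge\sigma\in\Lambda\subseteq T$. This contradicts $\sigma\subseteq T$ together with the finite satisfiability of $T$. Now take ${\cal G}$ on $I$ containing all the cones. Then
$$\{\phi\in SO:\{\sigma\in I:{\cal A}_{\sigma}\models\phi\}\in{\cal G}\}=T,$$
since both sides are complete theories and the left side contains $T$. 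By Lemma~\ref{lem: főlemma}, $\{{\cal A}_{\sigma}:\sigma\in I\}\subseteq K$ and $\{{\cal B}_{\lambda}:\lambda\in J\}$ are completely inseparable from each other. Closure under complete inseparability then forces some ${\cal B}_{\lambda}$ to lie in $K$, contradicting the choice ${\cal B}_{\lambda}\notin K$. Therefore a finite $\lambda\subseteq\Lambda$ axiomatizes $K$, and $\phi=\bigwedge\lambda\in SO$ defines $K$.

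The main obstacle is the step showing that the two ultralimit theories coincide. Both are obtained from cone-containing ultrafilters on finite subsets, and it must be checked carefully that each is a complete, consistent $SO$-theory: every sentence or its negation lands in the ultrafilter set, and by Łoś-type reasoning at the level of truth in factors no contradiction can arise. Once that is verified, Lemma~\ref{lem: főlemma} does all the structural work and no new ultrachain construction is needed.
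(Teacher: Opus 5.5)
Your proof is correct, and your forward direction is the same as the paper's. For the converse, you and the paper reach the same configuration by different routes. In both cases the configuration is a family inside $K$ and a family inside $K^{c}$ with equal ultralimit $SO$-theories, which Lemma~\ref{lem: főlemma} turns into complete inseparability.

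\textbf{The paper's route.} It uses Lemma~\ref{lem: 4eqv}: if $K$ is not $\mathrm{Mod}(\phi)$ for a single $\phi$, then $T_{\overline{K}}\cap T_{\overline{K^{c}}}\neq\emptyset$. This gives decomposable--Henkin models built from $K$ and from $K^{c}$ with the same $SO$-theory. Lemma~\ref{lem: Łos} then converts their Henkin theories into ultralimit theories.

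\textbf{Your route.} You carry out the compactness argument directly. The counterexamples ${\cal B}_{\lambda}\in\mathrm{Mod}(\bigwedge\lambda)\setminus K$ give a complete, finitely satisfiable $T\supseteq\Lambda$. The definition of $\Lambda$ then lets you realise the same $T$ as an ultralimit of members of $K$.

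\textbf{What each buys.} The paper's route is modular: the same lemma works for any Boolean-closed fragment, and it also gives the $\Delta_{1}$ step in Theorem~\ref{thm: PC}. Yours is self-contained, needs neither the spaces of formulas nor the Henkin semantics, and exhibits the defining sentence explicitly as a finite conjunction of members of $\Lambda$.

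\textbf{Two minor points.} First, the detour through Theorem~\ref{thm: axi so} is unnecessary. The inclusion $K\subseteq\mathrm{Mod}(\bigwedge\lambda)$ is trivial for finite $\lambda\subseteq\Lambda$, and apart from the definition of $\Lambda$ that is all you use. Second, the step you flag as the main obstacle is routine. Every structure satisfies exactly one of $\psi,\neg\psi$, so each ultralimit theory automatically contains exactly one of them and is finitely satisfiable. Any such set containing $T$ must therefore equal $T$.
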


\begin{proof}
    First suppose that there exists $\phi\in SO$ such that $\mathrm{Mod}(\phi)=K$, and let $\{ {\cal A}_{i} : i\in I\}\subseteq K$ and $\{ {\cal B}_{j} : j\in J\}$ be sets of $\tau$-structures such that $\{ {\cal A}_{i} : i\in I\}$ and $\{ {\cal B}_{j} : j\in J\}$ are completely inseparable from each other. Then, by Lemma~\ref{lem: főlemma}, there are ultrafilters ${\cal G}$ over $I$ and ${\cal G'}$ over $J$ such that
\[
\{\xi\in SO\text{ : }\{i\in I\text{ : }{\cal A}_{i}\models \phi\}\in {\cal G}\}
=
\{\psi\in SO\text{ : }\{j\in J\text{ : }{\cal B}_{i}\models \psi\}\in {\cal G'}\}.
\]
Hence
\[
\emptyset\neq\{j\in J\text{ : }{\cal B}_{i}\models \phi\}\in {\cal G'},
\]
so
\[
\{ {\cal B}_{j} : j\in J\}\cap \mathrm{Mod}(\phi) \neq\emptyset,
\]
and therefore $K$ is closed under complete inseparability.

Now suppose, towards a contradiction, that $K$ is closed under complete inseparability but there is no $\phi\in SO$ with $\mathrm{Mod}(\phi)=K$. Then, by Lemma~\ref{lem: 4eqv}, $T_{\overline{K}}\cap T_{\overline{K^{c}}}\neq\emptyset$, i.e.\ there exist a decomposable–Henkin model $\big({\cal A}, \Upsilon\big)$ formed from the family $\{ {\cal A}_{i}\text{ : } i\in I\}\subseteq K$ by an ultrafilter $\cal G$ and a decomposable–Henkin model $\big({\cal B}, \Upsilon^{0}\big)$ formed from the family $\{ {\cal B}_{j}\text{ : } j\in J\}\subseteq K^{c}$ by an ultrafilter $\cal G'$, such that
\[
\mathrm{Th}_{SO}\big({\cal A}, \Upsilon\big)= \mathrm{Th}_{SO}\big({\cal B}, \Upsilon^{0}\big).
\]
Hence,  by Lemma~\ref{lem: Łos},
\[
\{\xi\in SO\text{ : }\{i\in I\text{ : }{\cal A}_{i}\models \xi\}\in {\cal G}\}
=
\{\psi\in SO\text{ : }\{j\in J\text{ : }{\cal B}_{j}\models \psi\}\in {\cal G'}\},
\]
and therefore, by Lemma~\ref{lem: főlemma}, the families $\{ {\cal A}_{i}\text{ : } i\in I\}$ and $\{ {\cal B}_{j}\text{ : } j\in J\}$ are completely inseparable from each other. By the hypothesis that $K$ is closed under complete inseparability, it follows that $\{ {\cal B}_{j}\text{ : } j\in J\}\cap K\neq\emptyset$, which is impossible since $\{ {\cal B}_{j}\text{ : }j\in J\}\subseteq K^{c}$. This contradiction proves the theorem.\end{proof}

\begin{center}
    { \textbf{Acknowledgements}}
\end{center} 
I am deeply grateful to Gábor Sági for his  constant care  and support throughout this work.  I thank him for everything I have learned from him and for suggesting this fascinating topic.  He is a fantastic and inspiring teacher, and I wish everyone could have a supervisor as excellent as he has been for me.

\bigbreak \leftline{Eotvös Loránd University, }
\leftline{Institute of Mathematics,} \leftline{Pázmány Péter stny.
1/C, 1117 Budapest, Hungary}
\leftline{e-mail: janos.ivanyos@gmail.com} \ \\
\\

\end{document}